\newtheorem{theorem} {Theorem}[section]
\newtheorem{proposition}[theorem]{Proposition}
\newtheorem{lemma}[theorem]{Lemma}
\newtheorem{corollary}[theorem]{Corollary}
\newtheorem{definition}[theorem]{Definition}
\newtheorem{question}[theorem]{Question}
\theoremstyle{plain}
\newtheorem{theoremint} {Theorem}
\newtheorem*{theoremint2} {Theorem 1}
\newtheorem*{definitionint}{Definition}
\newtheorem{Thmbis} {Theorem}
\theoremstyle{remark}
\newtheorem{remark}[theorem]{Remark}
\renewenvironment{proof}{\noindent{\bf Proof. }}{\hfill{$\square$} \vskip.3cm}
\newenvironment{notation}{\noindent{\bf Notation. }}{\medskip}
\def\cal{\mathcal}
\def\C{{\mathbb C}}
\def\F{{\mathcal F}}
\def\N{{\mathbb N}}
\def\S{{\mathbb S}}
\def\T{{\mathcal T}}
\def\Z{{\mathbb Z}}
\def\ds{{\displaystyle}}
\def\Rat{{\rm Rat}}
\def\rat{{\rm rat}}
\def\TOF{\bf{\bf{\mathfrak T}}}
\def\Dyn{{{\bf Dyn}}}
 \def\epsilon{{\varepsilon}}
\def\deg{{\rm deg}}
\def\mult{{\rm mult\,}}
\def\Crit{{\rm Crit}}
 \title[Approximability of dynamical systems between trees of spheres]{Approximability of dynamical systems between trees of spheres}                                 
    \author[Matthieu Arfeux]{Matthieu Arfeux}                                
    \address{Pontificia Universidad Católica de Valparaíso, Blanco Viel 596, Cerro Barón, Valparaíso, Chile}                                    
    \email{matthieu.arfeux@pucv.cl}                                      
    \date{}                                       
    \thanks{I would like to thank my advisor Xavier Buff for helping me to make my ideas more clear. This work would not have been possible without a lot of interesting discussions during conferences and collaborations in Santiago de Chile with Jan Kiwi. Also I would like to thank Charles Favre for his efforts teaching me the Berkovich point of view. This paper would not have been the same without the helpful comments form the kind and patient  referee.}                                     
\begin{document}

\begin{abstract}

We study sequences of analytic conjugacy classes of rational maps which diverge in moduli space. In particular, we are interested in the notion of rescaling limits introduced by Jan Kiwi. From \cite{A2},
we recall the notion of dynamical covers between trees of spheres for which a periodic sphere corresponds to a rescaling limit. We study necessary conditions for such a dynamical cover to be the limit of dynamically marked rational maps. With these conditions we classify these covers in the case of bi-critical maps and we  recover the second main result of Jan Kiwi regarding rescaling limits. 

\end{abstract}

    \maketitle


\tableofcontents


\section{Introduction}

\medskip
\noindent{\textbf{Context.}}
Let $\S$ denote the Riemann sphere. We are interested in the space $\Rat_d$ of rational maps of degree $d\geq2$ and its quotient space $\rat_d$ modulo conjugacy by Moebius transformations. More precisely we are interested in the iteration of the elements of $\Rat_d$ on $\S$ and in what happens when we consider a sequence of such elements whose representatives diverge in $\rat_d$. For such sequences, we can have a phenomenon called a rescaling limit as described below.

\begin{definitionint}
For a sequence of rational maps $(f_n)_n$ of a given degree, a rescaling is a sequence of Moebius transformations $(M_n)_n$ such that there exist $k\in\N$ and a rational map $g$ of degree $\geq 2$ such that
$$M_n\circ f_n^k\circ M_n^{-1}\to g$$
uniformly on compact subsets of $\S$ with finitely many points removed.

If this $k$ is minimal then it is called the rescaling period for $(f_n)_n$ at $(M_n)_n$ and $g$ is a rescaling limit for $(f_n)_n$.
\end{definitionint}

This phenomenon was first observed in \cite{S} and inspired Adam Epstein's results in \cite{BHC}. It is related to the existence of indeterminacy points of the iteration map $\Phi_k: \rat_d\to \rat_{d^k}$ defined for $k>1$ by $\Phi_k([f])=[f^k]$. 
In \cite{D1} and \cite{D2}, Laura De Marco studies a compactification of $\rat_d$ giving a continuous extension of $\Phi_k$ through two, a priori different but in fact equivalent, points of view: using the geometric invariant theory or looking at measures of maximal entropy. 
Jan Kiwi was  the first to study rescaling limits explicitly in \cite{Kiwi2}, using diverging families and Berkovich spaces.
After this work, inspired by the Deligne-Mumford compactification of the moduli space of stable curves, in \cite{A},\cite{A2} and \cite{A1},  I introduced another vocabulary to study these phenomena, closer to the standard one used in holomorphic dynamics. 

\medskip
\noindent{\textbf{Trees of spheres and rescaling limits.}}
Denote by $X$, $Y$ and $Z$ three finite sets with at least three elements each and satisfying $X\subseteq Y\cap Z$.
Denote by ${\bf F}$ a {\it portrait} (a map between $Y$ and $Z$ together with a degree function $\deg_{F}:Y\to\N$ satisfying some specific properties) 
and let $\Rat_{{\bf F},X}$ be the set of rational map dynamically marked by $({\bf F},X)$ (the definitions are recalled in Definition \ref{mark} and Definition \ref{dynmark}).
I introduce the natural group of isomorphisms of the elements of $\Rat_{{\bf F},X}$ and define the {\it moduli space of dynamically marked rational map} $\rat_{{\bf F},X}$ to be the quotient of $\Rat_{{\bf F},X}$ under the action of this group by conjugacy.

Then I described a compactification of $\rat_{{\bf F},X}$ by considering it as a subspace of a space of more general rational maps which is compact: the {\it space of dynamical systems between trees of spheres} considered modulo conjugacy by their natural isomorphisms. 

Quite informally, a {\it tree of spheres} $\T^X$ marked by $X$ is a finite tree where the leaves are the elements of a finite set $X$, all internal vertices are spheres, and the edges join marked points or leaves on these spheres (see Figure \ref{exmil} and the formal definition slightly different in Section \ref{etpuiszut}). Still informally, a {\it cover} $\F$ between the trees of spheres $\T^Y$ marked by $Y$ and $\T^Z$ marked by $Z$ is a nice map that assigns to every vertex of $\T^Y$ a vertex of $\T^Z$ and defines a branched cover between them when they are internal vertices (spheres). A {\it dynamical system} between trees of spheres $(\F:\T^Y\to\T^Z,\T^X)$ is a pair consisting of a cover between trees of spheres and a tree of spheres $\T^X$ marked by $X$ with $X\subseteq Y\cap Z$, such that $\T^X$ is {\it compatible with} each of $\T^Y$ and $\T^Z$  (cf example on Figure \ref{cexresc}). The spheres in $\T^X$ are simultaneously spheres of $\T^Y$ and $\T^Z$ so that we can do dynamics. All these definitions will be precisely recalled in Section \ref{Prem}.

In this context, I proved that if a sequence in $\Rat_d$ has a rescaling limit, then, after considering a subsequence and an adequate marking $({\bf F},X)$,  
\begin{itemize}
\item this sequence converges to a dynamical system between trees of spheres $(\F:\T^Y\to\T^Z,\T^X)$, and
\item to each rescaling limit we can associate naturally a unique periodic sphere of $\T^X$ that contains in its cycle a sphere for which the composition of the branched cover along the cycle has degree greater than one (a critical periodic sphere).
\end{itemize}
Conversely we proved that if a dynamical system between trees of spheres is a limit of a sequence of elements in $\rat_{{\bf F},X}$ and has a critical periodic sphere then this sphere can be also naturally associated to a rescaling  limit for this sequence.

On the set of rescaling limits, we recall from \cite{Kiwi2} the two equivalence relations of equivalent rescalings and dynamically dependent rescalings.
We proved that two rescalings are 
\begin{itemize}
\item {\it equivalent} if and only if they can be naturally associated to the same periodic sphere of some limiting dynamical system between trees of spheres and
\item {\it dynamically dependent} if and only if the two naturally associated spheres in some limiting dynamical system between trees of spheres are in the same cycle. 
\end{itemize}
In this paper we will use these characterizations instead of J.Kiwi's explicit definitions.

\medskip
\noindent{\textbf{Objectives.}}
Using the formalism of Berkovich spaces, J.Kiwi proved the two following theorems.

\begin{Thmbis}\label{alpha} \cite{Kiwi2} For every sequence in $\Rat_d$ for $d\geq 2$ there are at most $2d-2$ dynamically independent rescalings classes with a non post-critically finite rescaling limit. 
\end{Thmbis}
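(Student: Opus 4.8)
The plan is to recast the whole statement in the language of dynamical covers between \tss and then reduce it to a count of critical points. Suppose, for contradiction, that $(f_n)_n$ admits $2d-1$ dynamically independent rescalings $(M_n^{(1)})_n,\dots,(M_n^{(2d-1)})_n$, each with a non post-critically finite rescaling limit. First I would mark $(f_n)_n$ by a finite set of points adapted to these rescalings and, after passing to a subsequence, use the compactness and convergence results recalled from \cite{A2} to obtain a limiting dynamical cover $F$ of a \ts $\T$. Under this convergence each chosen rescaling corresponds to a periodic vertex of $\T$, its rescaling limit is conjugate to the first-return map $g_v = f_v^{\circ p}$ along the cycle of period $p$ through that vertex $v$, and two rescalings are dynamically dependent exactly when their cycles share the same grand orbit under $F$. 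Thus the $2d-1$ classes produce $2d-1$ distinct grand orbits of periodic cycles of $\T$, each carrying a non post-critically finite first-return map.

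The second ingredient is a Riemann--Hurwitz count for $F$. Since $F$ has degree $d$, I would show, applying Riemann--Hurwitz vertex by vertex and keeping track of the branching absorbed at the nodes, that the critical points of $F$ lying away from the nodes --- the \emph{free} critical points --- number exactly $2d-2$ counted with multiplicity.

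The heart of the argument is then the following lemma: every periodic cycle whose first-return map is not post-critically finite contains a free critical point of $F$ whose forward orbit is infinite and never meets a node. Indeed, if $g_v$ is not post-critically finite it has a critical point $c$ with infinite $g_v$-orbit; by the chain rule $c$ arises from a critical point of one of the maps $f_{v_i}$ composing $g_v$, hence from a critical point of $F$. Were all critical points of $g_v$ either eventually periodic or eventually sent to a node, then, the finitely many node directions being preperiodic, $g_v$ would be post-critically finite, a contradiction. Such a free, infinite-orbit critical point determines the grand orbit of its cycle, so distinct dynamically independent classes consume distinct free critical points; this forces $2d-1 \leq 2d-2$, the desired contradiction.

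The step I expect to be the main obstacle is the precise critical-point bookkeeping on $\T$: one must verify that branching created at the nodes during degeneration is not miscounted, that the chain-rule attribution of a critical point of $g_v$ to a genuine free critical point of $F$ survives collisions of critical points along the cycle, and that each free infinite-orbit critical point can be charged to at most one non post-critically finite grand orbit. A secondary technical point is to check that a single finite marking can realise all $2d-1$ representatives as periodic spheres of the same limiting cover; I would arrange this by marking incrementally, each additional rescaling enlarging the combinatorial type of $\T$, and then passing to a diagonal subsequence.
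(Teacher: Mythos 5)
You should first be aware that this paper does not reprove Theorem~\ref{alpha}: it is recalled from \cite{Kiwi2}, and the trees-of-spheres proof the author refers to is the one given in \cite{A2}. Your outline does follow that strategy in its broad lines: marking the sequence and passing to a subsequence so that all rescaling classes are realized by periodic internal vertices of a single limiting dynamical system is exactly Theorem~\ref{proparbrplin}; the identification of the rescaling limit with the first-return map $f_v^{k}$ is Lemma~\ref{convit}; and the number $2d-2$ is immediate from the portrait condition $\sum_{a\in Y}(\deg(a)-1)=2d-2$. One correction to your vocabulary: there are no ``free critical points away from the nodes.'' Since each $f_v:\S_v-Y_v\to\S_w-Z_w$ is a cover, \emph{every} critical point of every $f_v$ is an attaching point of an edge; the $2d-2$ objects available for your charging argument are the critical \emph{leaves} of the portrait, a critical point of local degree $\delta$ at an attaching point being accounted for by the $\delta-1$ critical leaves in the corresponding branch (Lemma~\ref{corollaribilisation}).

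The genuine gap is your key charging lemma, which you justify by chain-rule bookkeeping alone: that each non post-critically finite cycle consumes a critical leaf of its own. This injectivity is \emph{not} a combinatorial fact about dynamical systems between trees of spheres, and in your sketch approximability is used only once, to build the limit object. The paper itself shows why this cannot work: Figure~\ref{cexresc} exhibits a degree-$2$ dynamical system between trees of spheres with three disjoint critical cycles (periods $8$, $4$ and $2$), while $2d-2=2$; since a bicritical system has only two critical leaves and all critical vertices lie on the path joining them (Lemma~\ref{ptscrit}), distinct cycles there necessarily draw their criticality from the same critical leaf direction. That example is precisely \emph{not} approximable, and what excludes such sharing for elements of $\partial\Rat_{{\bf F},X}$ is the Branches Lemma (Lemma~\ref{branchcrit}), whose proof is analytic: it uses the approximating sequence, the scaling factors $\lambda_n\to\infty$ between charts at the two vertices, and comparison of Laurent coefficients. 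It states that if $a_0\in\S_v$ is a point of a periodic sphere whose orbit carries branches into branches, then the branch $B$ contains no critical periodic vertex, and if it contains a periodic internal vertex at all then its cycle has degree $1$ and $a_0$ is periodic --- exactly what is needed so that the branch attached at your infinite-orbit critical point cannot harbor a second critical cycle, i.e.\ so that distinct non post-critically finite classes consume distinct critical leaves. Without invoking this lemma (or an equivalent analytic input from the sequence $(f_n)_n$), your final inequality $2d-1\le 2d-2$ does not follow, and the step you flagged as ``the main obstacle'' is in fact the whole content of the theorem.
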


\begin{Thmbis}\label{omega} \cite{Kiwi2} Every sequence in $rat_2$ admits at most 2 dynamically independent rescalings limits of period at least $2$. Furthermore, in the case that a rescaling of period at least $2$ exists, then exactly one of the following holds:
\begin{enumerate}
\item $(f_n)_n$ has exactly two dynamically independent rescalings, of periods $q'>q>1$. The period $q$ rescaling limit is a quadratic rational map with a multiple fixed point and a prefixed critical point. The period $q'$ rescaling limit is a quadratic polynomial, modulo conjugacy.
\item $(f_n)_n$ has a rescaling whose corresponding limit is a quadratic rational map with a multiple fixed point and every other rescaling is dynamically dependent to it.
\end{enumerate}
\end{Thmbis}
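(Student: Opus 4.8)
The plan is to transfer the entire statement into the language of dynamical covers between \tss, where, by the theory recalled from \cite{A2}, a divergent sequence in $\rat_2$ yields, after passing to a subsequence and choosing a suitable marking, a dynamical self-cover $F\colon\T\to\T$. Under this dictionary a rescaling of period $\geq 2$ corresponds to a periodic cycle of vertices (spheres) $v_0\to v_1\to\dots\to v_{q-1}\to v_0$ whose first-return map has degree $\prod_i \deg f_{v_i}\geq 2$, the rescaling limit being conjugate to this return map, and two rescalings are dynamically independent exactly when their cycles lie in distinct grand orbits of $F$. So the first task is to make this correspondence precise and to verify that the minimality of the period $k$ in Kiwi's definition matches the combinatorial period $q$ of the cycle.

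Next I would set up the critical bookkeeping. Since $d=2$ there are only $2d-2=2$ critical points, and each periodic cycle carrying a rescaling of period $\geq 2$ must contain at least one vertex $v_i$ with $\deg f_{v_i}=2$, so that the return map has degree exactly $2$. Via the Riemann--Hurwitz count for the tree cover, such a vertex consumes a definite share of the critical budget; summing $\deg f_{v_i}-1$ over one representative per independent cycle gives at most $2d-2=2$, which is the origin of the bound ``at most two independent rescalings of period $\geq 2$''. The delicate point is that the critical points of a vertex map may sit at the attaching nodes rather than be genuine limits of the critical points of $f_n$; the necessary conditions for approximability developed in the paper are precisely what pin down where these critical points can lie, and I would use them to discard the configurations that would otherwise permit a third independent cycle.

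Then comes the dichotomy. With the two critical points distributed over the tree, there are essentially two ways the return-degree-$2$ cycles can be arranged. If the two critical points feed two cycles in distinct grand orbits, I would show that one cycle must attach to the rest of the tree through a totally invariant node, forcing its return map to be conjugate to a quadratic polynomial (the period $q'$ rescaling), while the other cycle attaches through a node that degenerates to a \emph{multiple fixed point} and receives the critical orbit of the first, so that its return map is a quadratic rational map with a multiple fixed point and a \emph{prefixed} critical point; this is case (1), and the grand-orbit relation forces $q'>q>1$. If instead the two critical points cannot be separated into two independent cycles, I would show that a single cycle captures the rescaling with a multiple fixed point and that every other periodic cycle maps into its grand orbit, giving case (2).

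The hard part, as I see it, is this last step: translating purely combinatorial tree data into the precise analytic normal forms in Kiwi's statement. Establishing ``multiple fixed point $\Leftrightarrow$ a specific node degeneration'', ``quadratic polynomial $\Leftrightarrow$ totally invariant node'', and above all ``prefixed critical point $\Leftrightarrow$ the critical orbit of the faster rescaling lands on the slower one'' requires a careful local analysis of the return maps at the periodic spheres together with the approximability conditions, and it is also where one must rule out the a priori possible mixed configurations so that the apparent trichotomy collapses to the stated dichotomy. Once the normal forms are identified, the inequality $q'>q$ and the exhaustiveness (``exactly one of the following'') follow from the grand-orbit combinatorics, and comparison with Theorem~\ref{alpha} confirms the count.
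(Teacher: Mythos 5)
Your reduction to \tss (via Theorem \ref{proparbrplin}) matches the paper's first step, but there are two genuine gaps after that. First, the bound ``at most two independent rescalings'' does not follow from the Riemann--Hurwitz budget $\sum(\deg f_{v_i}-1)\leq 2d-2$ as you propose. That budget counts the two critical \emph{leaves}, and in the bicritical case Lemma \ref{ptscrit} puts \emph{every} critical vertex on the single path $C_d=[c,c']$, where each sphere map has degree exactly $d$ drawing on the same two critical leaves; distinct periodic cycles meeting $C_d$ therefore do not consume disjoint shares of the budget, and nothing in Riemann--Hurwitz alone prevents three or more periodic vertices on $C_d$. (Your count also silently conflates Kiwi's Theorem \ref{alpha}, which only bounds rescalings with non post-critically finite limits, with the present statement, which covers possibly PCF limits; the numerical coincidence $2d-2=2$ at $d=2$ hides this.) The paper instead gets the bound dynamically: it shows $C_d=[c',v_0]\cup[v_0,v'_0]\cup[v'_0,c]$ contains only the two periodic vertices $v_0,v'_0$, using the branches lemma (\ref{branchcrit}) together with the fact that the critical orbit of the parabolic return map lies in the parabolic basin (Corollary \ref{autreptcrit}).

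Second, the steps you defer as ``the hard part'' are precisely the content of the proof, and the mechanism is different from your sketch. The paper's pivot is a marking of all $d+1$ fixed points (hence the hypothesis on $x_n(X)$, arranged via Lemma \ref{ext22} after passing to a subsequence): a case analysis produces a \emph{fixed} vertex $w_0$ separating three fixed leaves whose sphere map $f_{w_0}$ is a rotation of finite order $k_0>1$. The multiple fixed point is then forced by the non-critical annulus lemma (\ref{multiplic}): the annulus $]\!]w_0,v_0[\![$ returns to itself under $F^{k_0}$ with degree $1$, so the product of the multipliers at its two ends equals $1$; since $f_{w_0}^{k_0}=\mathrm{id}$ the end at $i_{v_0}(w_0)$ has multiplier $1$, i.e.\ is parabolic --- no ``node degeneration'' analysis independent of this lemma exists in the paper. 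Likewise $q'>q$ is not ``grand-orbit combinatorics'': it comes from the critical annulus lemma (\ref{annocritiq}), which forbids $F^{k_0}(]\!]v_0,v'_0[\![)$ from nesting back into $]\!]v_0,v'_0[\![$, ruling out $k'_0=k_0$. The prefixed critical point is obtained by showing (by contradiction with Lemma \ref{branchcrit}) that some iterate of $v'_0$ must enter a branch $B'_i$ attached at a preimage $\beta_i$ of the parabolic point, whence $f^{k_0+i}(a_{v_0}(v'_0))=\alpha$. Finally, exhaustiveness requires handling the case where $v_0$ is forgotten by $F^k$, $k<k_0$, which the paper resolves only by re-marking the orbit of a critical leaf via Lemma \ref{ext22} and reducing to the persistent case --- a step absent from your outline. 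As written, your proposal correctly predicts the shape of the dichotomy but supplies a proof for neither the cardinality bound nor the normal forms.
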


In \cite{A2} we re-proved Theorem \ref{alpha} after a study of natural properties of general dynamical systems between trees of spheres. Here we re-prove in Theorem \ref{omega}. We will remark that dynamical systems between trees of spheres can have more than two critical periodic cycles (example on Figure \ref{cexresc}). Hence, there exists dynamical system between trees of spheres that cannot be limit of dynamically marked rational maps.
We will describe two necessary conditions for a  dynamical systems between trees of spheres in order to be a limit of dynamically marked rational maps. Those are inspired from Groetsch's Inequality and they will be sufficient to recover Theorem \ref{omega}.

\medskip
\noindent{\textbf{Outline.}}
In Section \ref{Prem} we begin by recalling the main notions introduced in \cite{A2}.  Then we add new ones from subsection \ref{ann}. We define a branch to be a connected component of a tree minus an internal vertex. We define an annulus to be  the non empty intersection of two branches. For $v$ and $v'$ two internal vertices we denote by $[v,v']$ the path between them and by $]\!]v,v'[\![$ the annuli consisting of the intersection of branches on $v$ and $v'$ which has non-empty intersection with $[v,v']$.
 We then prove some properties about branches and annuli of dynamical systems between trees of spheres. 

When a dynamical system of trees of spheres with portrait ${\bf F}$ is the limit of a sequence of dynamical systems of marked spheres diverging in $\Rat_{{\bf F},X}$ we say that it is {\bf approximable} by this sequence. Let $\partial\Rat_{{\bf F},X}$ denote the set of such approximable dynamical systems. We prove in Section \ref{conNecess} two  lemmas (Lemma \ref{branchcrit} and Lemma \ref{annocritiq}) who state necessary conditions for a dynamical system between trees of spheres in order to be approximable. Quite informally, these lemmas say that if an approximable dynamical system between trees of spheres maps an annulus $A$ to another one $A'$ in a position that makes them comparable for the inclusion (in a more precise sense), then
\begin{itemize}
\item either the degree of the cover on $A$ is $1$, then $A=A'$ and we have additional properties relating the dynamics on the border of these annuli, 
\item or this degree is bigger than one and $A\subset A'$.
\end{itemize}

In Section \ref{chap6}, we classify the rescaling limits of the dynamical systems between trees of spheres with exactly two critical leaves (bi-critical).

 \begin{theoremint}[Classification]\label{thmkiw2}\label{class0}
Let ${\bf F}$ be a portrait of degree $d$ with $d+1$ fixed points and exactly $2$ critical points and let $({\cal F},{\cal T}^X)\in\partial\Rat_{{\bf F},X}$. Suppose that there exists $(\displaystyle f_n,y_n,z_n)\overset{\lhd}{\longrightarrow}{ \F}$ in $\Rat_{{\bf F},X}$ such that for every $n$, $x_n(X)$ contains all the fixed points of $f_n$.
 Then the map $\F$ has at most two critical cycles of spheres; they have degree $d$.

Assume that there exits at least one rescaling limit. Then there is a vertex $ w_0$ separating three fixed points which is fixed and such that $f_{ w_0}$ has finite order $k_0>1.$
 Denote by $v_0$ the critical vertex separating $w_0$ and the two critical leaves.
\begin{enumerate} 

\item\label{class2}  Either $v_0$ belongs to a critical cycle of period $k_0$ and
\begin{enumerate} 
	\item\label{class2a} its associated cover has a parabolic fixed point;
	\item\label{class2b} if there is a second critical cycle then it has period $k'_0>k_0$, its associated cover has a critical fixed point with local degree $d$ and the cover associated to $v_0$ has a critical point that eventually maps to the parabolic fixed point.
\end{enumerate}
\item\label{class11}  Or $F^k(v_0)\notin {\cal T^X}$ for some $k<k_0$; in this case there is exactly one critical cycle; it has period $k'_0> k_0$ and its associated cover has a critical fixed point with local degree $d$.
\end{enumerate}
\end{theoremint}


We then deduce a generalization of Theorem \ref{omega} and \cite{A} to the bi-critical case :

\begin{theoremint}\label{omega2}  Every sequence of bi-critical maps in $rat_d$ admits at most 2 dynamically independent rescaling limits of period at least $2$. Furthermore, in the case that a rescaling of period at least $2$ exists, then exactly one of the following holds:
\begin{enumerate}
\item $(f_n)_n$ has exactly two dynamically independent rescalings, of periods $q'>q>1$. The period $q$ rescaling limit is a degree $d$ rational map with a multiple fixed point and a prefixed critical point. The period $q'$ rescaling limit is a degree $d$ polynomial, modulo conjugacy.
\item $(f_n)_n$ has a rescaling whose corresponding limit is a degree $d$ rational map with a multiple fixed point and every other rescaling is dynamically dependent to it.
\end{enumerate}
\end{theoremint}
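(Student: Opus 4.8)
The plan is to derive Theorem \ref{omega2} directly from the classification Theorem \ref{thmkiw2} by translating the combinatorial data of the critical cycles of spheres into the dynamical data of rescaling limits. First I would recall, from \cite{A2}, the correspondence established there: a cycle of spheres containing a critical sphere corresponds precisely to a class of dynamically dependent rescalings, and the associated cover on the periodic sphere is the rescaling limit (up to conjugacy), with the period of the cycle being the rescaling period. Under this dictionary, ``dynamically independent rescalings of period at least $2$'' translate to distinct critical cycles of spheres of period $k>1$. Since Theorem \ref{thmkiw2} asserts that $\F$ has at most two critical cycles of spheres, the bound of at most $2$ dynamically independent rescalings of period $\geq 2$ follows immediately, once I check that a rescaling of period $\geq 2$ forces the existence of a critical cycle of the required kind (this is where the hypothesis that $x_n(X)$ contains all the fixed points of $f_n$ and the $d+1$ fixed point count must be verified to hold, or reduced to, in the setting of an arbitrary sequence in $rat_d$).

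The main work is then a case-by-case matching of the two alternatives in Theorem \ref{thmkiw2} with the two alternatives in Theorem \ref{omega2}. In case \ref{class2} of the classification, $v_0$ lies on a critical cycle of period $k_0>1$ whose associated cover has a parabolic fixed point; this cover is a degree $d$ rational map with a multiple (parabolic) fixed point, giving the rescaling limit of the smaller period $q=k_0$, and the ``prefixed critical point'' of Theorem \ref{omega2}(1) comes from the statement in \ref{class2b} that this cover has a critical point eventually mapping to the parabolic fixed point. If a second critical cycle exists, \ref{class2b} says it has period $k'_0>k_0$ and its cover has a critical fixed point of local degree $d$; a degree $d$ rational map with a totally ramified fixed point is, modulo conjugacy, a degree $d$ polynomial, yielding the period $q'=k'_0$ polynomial rescaling limit. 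Thus case \ref{class2} with a second cycle maps onto Theorem \ref{omega2}(1), and case \ref{class2} with no second cycle (as well as the degenerate reading) maps onto Theorem \ref{omega2}(2), where the single rescaling has a multiple fixed point and all others are dynamically dependent to it.

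In case \ref{class11} of the classification, $v_0$ is forgotten by some $F^k$ with $k<k_0$ and there is exactly one critical cycle, of period $k'_0>k_0$, whose cover has a critical fixed point of local degree $d$; again this cover is conjugate to a degree $d$ polynomial. Since here there is only one critical cycle, every rescaling is dynamically dependent to the one it determines, placing this case under Theorem \ref{omega2}(2) as well (a single polynomial-type rescaling), with the multiple-fixed-point map appearing at $w_0$ when needed. I would then assemble these into the dichotomy of Theorem \ref{omega2}, noting that the mutually exclusive structure of the cases in Theorem \ref{thmkiw2} guarantees that exactly one of the two alternatives of Theorem \ref{omega2} holds.

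The hard part will be the interface between the two theorems, rather than either theorem in isolation: I must argue that an arbitrary sequence of bi-critical maps in $rat_d$ with a rescaling of period $\geq 2$ gives rise, after passing to a subsequence and choosing an appropriate marking, to a limit $(\F,\T^X)\in\partial\Rat_{{\bf F},X}$ satisfying the hypotheses of Theorem \ref{thmkiw2} --- in particular realizing the portrait ${\bf F}$ with $d+1$ fixed points and a marking $x_n(X)$ capturing all fixed points of $f_n$. This requires invoking the compactness and marking constructions of \cite{A2} to produce the tree of spheres limit, and checking that the fixed-point count and the position of $w_0$ separating three fixed points are automatic for bi-critical maps possessing a rescaling of period at least two. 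Once this reduction is in place, the translation via the rescaling dictionary and the matching of cases above complete the proof.
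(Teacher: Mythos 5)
Your overall strategy is the same as the paper's: realize the given rescalings by a dynamical system between trees of spheres via the realization theorem from \cite{A2} (Theorem \ref{proparbrplin}), enlarge the marking by the extension lemma (lemma \ref{ext22}) so that the portrait has $d+1$ fixed points and $x_n(X)$ contains all fixed points of $f_n$, apply the classification theorem, and translate critical cycles of spheres into classes of dynamically dependent rescalings. Your matching of case \ref{class2} --- with a second cycle to alternative (1), without one to alternative (2) --- is also the intended dictionary, including reading the prefixed critical point off point \ref{class2b} and identifying a degree $d$ map with a totally ramified fixed point as a polynomial up to conjugacy.

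However, your disposal of case \ref{class11} contains a genuine error. You place it under alternative (2) of Theorem \ref{omega2}, ``with the multiple-fixed-point map appearing at $w_0$ when needed.'' In case \ref{class11} the unique critical cycle has an associated cover with a critical fixed point of local degree $d$, i.e.\ a polynomial with a superattracting, totally ramified fixed point --- not a multiple fixed point; and the cover at $w_0$ has degree $1$ (it has finite order $k_0$), so it is not a rescaling limit of degree at least $2$ and cannot supply the multiple fixed point that alternative (2) demands. As written, your dichotomy would therefore be false in this case. The paper resolves it in the opposite direction: in the proof of point \ref{class11} one extends the marking by the first $k_0$ iterates of $c'$ (lemma \ref{ext22}), shows via lemma \ref{definiX} that $v_0$ is then no longer forgotten by $\tilde F^{k_0}$, and concludes that, after passing to a subsequence, one is in case \ref{class2} with a second (polynomial) cycle --- see the remark following that proof. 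In other words, in case \ref{class11} the sequence admits a hidden period-$k_0$ rescaling with parabolic limit that the original tree forgot, so this case lands in alternative (1) of Theorem \ref{omega2} with $q=k_0<q'=k'_0$, not in alternative (2). Since rescalings are defined for the sequence $(f_n)_n$ independently of the chosen marking, this extension step is not optional bookkeeping but exactly what makes the ``exactly one of the following holds'' statement true.
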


On Figure \ref{cexresc}, we provide an example of a dynamical system between trees of spheres that is not a limit of dynamically marked rational maps. Indeed, this dynamical system does not satisfy the conclusion of Theorem \ref{class0}.

To conclude, in Section \ref{last} we discuss questions about rescalings on an explicit example and compare our approach with J.Milnor's compactification in the special case of degree $2$.


\section{Preliminaries}\label{Prem}

\subsection{Some definitions and notation}

The content of this subsection already appears in \cite{A2}.

\subsubsection{Covers between trees of spheres}\label{etpuiszut}

 Let $X$ be a finite set with at least 3 elements. 
 A tree of spheres ${\cal T}$ marked by $X$ is the following data : 
\begin{itemize}
\item a combinatorial tree $T$ whose leaves are the elements of $X$ and every internal vertex has at least valence $3$,  
\item for each internal vertex $v$ of $T$, an injection $i_v$ of the set of edges $E_v$ adjacent to $v$ into a projective sphere that we denote by ${\S}_v$.
\end{itemize}

We often use the notation $T$ to emphasize that we talk about the combinatorial tree, whereas we use $\T$ when we use the datas of the spheres.
We denote by $\bf{\mathfrak T}_X$ the set of trees of spheres marked by $X$. 
 Let $X_v := i_v(E_v)$. We let $a_v:X\to {\S}_v$ to be a map given by $a_v(x) := i_v(e)$ if $x$ and $e$ lie in the same connected component of $T-\{v\}$. We denote by $[v,v']$ the path between $v$ and $v'$ including these vertices and by $]v,v'[$ the path $[v,v']$ minus the two vertices $v$ and $v'$.

When the tree has only one internal vertex $v$, this tree is equivalent to the data of the injection $i_v:E_V\to {\S}_v$ and each edge can be identified to the only leaf to which it is adjacent. In this case we get a marked sphere as defined below.

\begin{definition}[Marked sphere]
A sphere marked by $X$ is an injection $$x:X\to \mathbb S.$$
\end{definition}

In this paper we will make an abuse of notation by confounding a tree marked by $X$ with a unique internal vertex and  
the corresponding marked sphere. 

Let us recall the notion of rational maps marked by a portrait:

\begin{definition}[Marked rational maps]\label{mark}
A rational map marked by ${\bf F}$ is a triple $(f,y,z)$ where
\begin{itemize}
\item $f\in \Rat_d$
\item $y:Y\to \S$ and $z:Z\to \S$ are marked spheres, 
\item $f\circ y = z\circ F$ on $Y$ and 
\item $\deg_{y(a)}f = \deg(a)$ for $a\in Y$. 
\end{itemize}
\end{definition}

Where a portrait ${\bf F}$ of degree $d\geq 2$ is a pair $(F,\deg)$ such that 
\begin{itemize}
\item $F:Y\to Z$ is a map between two finite sets $Y$ and $Z$ and
\item $\deg:Y\to \N-\{0\}$ is a function that satisfies
\[\sum_{a\in Y}\bigl(\deg(a) -1\bigr) = 2d-2\quad\text{and}\quad \sum_{a\in F^{-1}(b)} \deg(a) = d\quad\text{ for all } b\in Z.\] 
\end{itemize}

If $(f,y,z)$ is marked by ${\bf F}$, we have the following commutative diagram : 

\centerline{
$\xymatrix{
 Y \ar[r]^{y}    \ar[d] _{{ F}} &\S  \ar[d]^{f} \\
      Z \ar[r]_{z}  &\S
  }$}

Typically, $Z\subset \S$ is a finite set, $F:Y\to Z$ is the restriction of a rational map $F:\S\to \S$ to $Y:=F^{-1}(Z)$ and $\deg(a)$ is the local degree of $F$ at $a$. In this case, the Riemann-Hurwitz formula and the conditions on the function $\deg$ imply that $Z$  contains the set $V_F$ of the critical values of $F$ so that $F:\S-Y\to \S-Z$ is a covering map.

As trees of spheres can be understood as a generalization of marked spheres, we define a generalization of marked rational maps as follows. 
A (holomorphic) cover ${\cal F}:{\cal T}^Y\to {\cal T}^Z$ between $Y\in\TOF_Y$ and $Z\in\TOF_Z$ is the following data
\begin{itemize}
\item a map $F:T^Y\to T^Z$ mapping leaves to leaves, internal vertices to internal vertices, and edges to edges, 
\item for each internal vertex $v$ of $T^Y$ and $w:=F(v)$ of $T^Z$, a holomorphic ramified cover $f_v:{\S}_v\to {\S}_w$ that satisfies the following properties: 
\begin{itemize}
\item the restriction $f_v : {\S}_v-Y_v\to {\S}_w-Z_w$ is a cover, 
\item $f_v\circ i_v = i_w\circ F$,
\item if $e$ is an edge between $v$ and $v'$, then the local degree of $f_v$ at $i_v(e)$ is the same as the local degree of $f_{v'}$ at $i_{v'}(e)$. 
\end{itemize}
\end{itemize}
In \cite{A2} we proved that a cover between trees of spheres ${\cal F}$ is surjective and has a global degree that we denote by $\deg({\cal F})$.

\subsubsection{Dynamical systems between trees of spheres}

Suppose in addition that $X\subseteq Y\cap Z$. Under some assumptions, we can associate a dynamical system to covers between trees of spheres. More precisely we will say that $({\cal F},{\cal T}^X)$ is a dynamical system of trees of spheres if :
\begin{itemize}
\item ${\cal F}:{\cal T}^Y\to {\cal T}^Z$ is a cover between trees of spheres, 
\item ${\cal T}^X$ is a tree of spheres compatible with ${\cal T}^Y$ and ${\cal T}^Z$, ie : 
\begin{itemize}
\item $X\subseteq Y\cap Z$
\item each internal vertex $v$ of $T^X$ is an internal vertex common to $T^Y$ and $T^Z$, 
\item  ${\S}_v^X = {\S}_v^Y = {\S}_v^Z$ and 
\item $a_v^X = a_v^Y|_X = a_v^Z|_X$. 
\end{itemize}
\end{itemize}
We denote by $\Dyn_{{\bf F},X}$ the set of dynamical system of trees of spheres defined this way.
We have the very useful following lemma:
\begin{lemma} \label{definiX}
If $T^X$ is compatible with $T^Y$ and if an internal vertex $v$ of $T^Y$ separates three vertices of $T^X$, then $v\in T^X$. 
\end{lemma}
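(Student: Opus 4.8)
The plan is to reduce the statement to leaves of $X$ and then exploit the uniqueness of the median (center) of three points in a tree. First I would record the standard fact that in any tree three distinct vertices are separated by a unique vertex, namely the common meeting point of the three connecting paths. Thus if the internal vertex $v\in IV^Y$ separates three vertices $u_1,u_2,u_3\in V^X$, then $v\neq u_i$ for each $i$ and the $u_i$ lie in three distinct connected components $C_1,C_2,C_3$ of $T^Y-\{v\}$.

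The key reduction is to replace each $u_i$ by a leaf of $X$ lying in the same component. If $u_i$ is a leaf, then $u_i\in X$ and I set $x_i:=u_i$. If $u_i$ is an internal vertex of $T^X$, then by the valence condition it has valence at least $3$ in $T^X$ and hence separates at least three leaves of $X$; by compatibility ($a_{u_i}^X=a_{u_i}^Y|_X$) these leaves lie in at least three distinct components of $T^Y-\{u_i\}$, and since $v\neq u_i$ occupies only one such component, at least two of them lie on the same side of $v$ as $u_i$. Choosing one gives $x_i\in X\cap C_i$. This produces three leaves $x_1,x_2,x_3\in X$ lying in $C_1,C_2,C_3$, so that $v$ is precisely the unique vertex of $T^Y$ separating $x_1,x_2,x_3$.

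Next I would transport the problem into $T^X$. Since $x_1,x_2,x_3$ are distinct leaves of $T^X$, their median $m$ in $T^X$ is an internal vertex (the meeting point of the three paths cannot be a leaf, a leaf having valence $1$), and $m$ separates them in $T^X$, meaning the three points $a_m^X(x_1),a_m^X(x_2),a_m^X(x_3)$ are pairwise distinct. Using $m\in IV^X\subseteq IV^Y$ and $a_m^X=a_m^Y|_X$ from compatibility, the same three points are distinct as values of $a_m^Y$, so $m$ separates $x_1,x_2,x_3$ in $T^Y$ as well. The uniqueness of the separating vertex in $T^Y$ then forces $m=v$, whence $v=m\in IV^X\subseteq T^X$, as desired.

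The only delicate point, and the one I would treat most carefully, is the reduction step: when a separated vertex $u_i$ happens to be internal in $T^X$, one must guarantee that a witnessing leaf of $X$ can be chosen strictly on the $u_i$-side of $v$. This is exactly what the valence-at-least-$3$ condition at internal vertices of $T^X$ provides, once combined with $v\neq u_i$ and the compatibility of the maps $a_{u_i}^X$ and $a_{u_i}^Y$; everything else is formal manipulation of separation in trees.
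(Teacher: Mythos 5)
Your proof is correct, but note that there is nothing in this paper to compare it against: Lemma \ref{definiX} is stated here without proof, being recalled from \cite{A2}, so your argument stands as a self-contained substitute. On the merits: you use exactly, and only, the data that compatibility provides --- $X\subseteq Y$, $IV^X\subseteq IV^Y$, and $a_u^X=a_u^Y|_X$ for $u\in IV^X$ --- and in particular you never assume that $T^X$ sits inside $T^Y$ as a subtree (edges preserved), which is the trap this lemma invites; separation transfers between the two trees precisely because distinct branches at a common internal vertex $u$ correspond to distinct values of $a_u$ (injectivity of $i_u$), which is the mechanism behind both your reduction step and your final transport of the median $m$ from $T^X$ to $T^Y$. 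The reduction of an internal $u_i\in IV^X$ to a witnessing leaf $x_i\in X\cap C_i$ is handled correctly: each of the at least three branches of $T^X$ at $u_i$ contains a leaf of $X$, compatibility makes these leaves lie in three distinct components of $T^Y-\{u_i\}$, and since $v\neq u_i$ occupies only one such component, some leaf lies on the $u_i$-side of $v$, the short path argument you give ($[x_i,u_i]$ avoids $v$) being exactly right. The remaining steps --- the median of three distinct leaves is internal by the valence-$1$ argument, any vertex separating three given vertices equals their median, hence $m=v\in IV^X$ --- are standard and correctly invoked. The one phrase worth tightening is the opening claim that ``three distinct vertices are separated by a unique vertex'': the median of three vertices may coincide with one of them, in which case no vertex separates them at all; since you immediately restrict to the situation where a separating vertex $v$ exists and deduce $v\neq u_i$, no harm is done, but the fact you actually use is uniqueness (any separator is the median), not existence.
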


With this definition we are able to compose covers along an orbit of vertices as soon as they are in $T^X$. When it is well defined we will denote by $f_v^k$ the composition $f_{F^{k-1}(v)}\circ\ldots\circ f_{F(v)}\circ f_v$.

Dynamical covers between marked spheres can be naturally identified to dynamically marked rational maps:

\begin{definition}[Dynamically marked rational map]\label{dynmark}
A rational map dynamically marked by  $({\bf F},X)$ is a rational map $(f,y,z)$ marked by  ${\bf F}$ such that  $y|_X=z|_X$. 
\end{definition}

We denote by 
 $\Rat_{{\bf F},X}$ the set of rational maps dynamically marked by $({\bf F},X)$. We identify $\Rat_{{\bf F},X}$ to the subset of $ \Dyn_{{\bf F},X}$ consisting of dynamical system between the trees of spheres $\T^Y$ and $\T^Z$ where $\T^Y\in\TOF_Y$ and $\T^Z\in\TOF_Z$, ie when $\T^Y$ and $\T^Z$ are identified with marked spheres.

Let $(\F,{\T}^X)\in \Dyn_{{\bf F},X}$. A period $p\geq 1$ cycle of spheres is a collection of spheres $({\S}_{v_k})_{k\in \Z/p\Z}$ where the $v_k$ are internal vertices of $T^X$ that satisfies $F(v_k)=v_{k+1}$. The cycle is critical if it contains a critical sphere, ie a sphere $\S_v$ such that $\deg(f_v)$ is greater or equal to two. If a sphere $\S_v$ on a critical cycle contains a critical point of its respective $f_v$ that has infinite orbit, then the cycle is said to be non post-critically finite.

\subsubsection{Convergence notions and approximability}
The following results are proven in \cite{A2}.

 \begin{definition}[Convergence of marked spheres]
A sequence ${\cal A}_n$
 of marked spheres $a_n:X\to{\mathbb S}_n$
  converges to ${\cal T}^X\in\TOF_X$
  if for every internal vertices $v$
 of $\T^X$ , there exists an isomorphism $\phi_{n,v}:{\mathbb S}_n\to{\S}_v$
    such that $\phi_{n,v} \circ a_n$
     converges to $a_v$. 
     \end{definition}
We write ${\cal A}_n\to {\cal T}^X$ or $\displaystyle {\cal A}_n\underset{\phi_n}\longrightarrow {\cal T}^X$. 
    
The following lemma explains in which sense the $\phi_{n,v}$ above depends on the sphere $\S_v$.

\begin{lemma} \label{noncomp}
Let $v$ and $v'$  be two distinct internal vertices of $\T^X$ and $(\cal A_n)_n$ be a sequence of marked spheres such that $\cal A_n\underset{\phi_n}\longrightarrow \T^X$. 
Then the sequence of isomorphisms $(\phi_{n,v'} \circ \phi_{n,v}^{-1})_n$
converges locally uniformly outside $i_v(v')$ to the constant $ i_{v'}(v)$. 
\end{lemma}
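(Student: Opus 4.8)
The plan is to regard $g_n:=\phi_{n,v'}\circ\phi_{n,v}^{-1}\colon\St_v\to\St_{v'}$ as a sequence of Möbius transformations (a composition of two projective isomorphisms, hence projective once we fix coordinates on the spheres) and to exploit the classical dichotomy for such sequences: after passing to a subsequence, a sequence of Möbius transformations of the sphere either converges locally uniformly to a Möbius transformation, or there exist points $p,q$ such that $g_n\to q$ locally uniformly on the complement of $p$ (one sees this by normalizing the representing $2\times2$ matrices to unit norm and extracting a limit $M$, the two cases corresponding to $\det M\neq 0$ and $\det M=0$). The whole argument then consists in ruling out the first case and identifying $p=i_v(v')$ and $q=i_{v'}(v)$ using the marked points. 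Since these will be determined independently of the chosen subsequence, the convergence of the full sequence follows by a standard subsequence argument. Here I write $i_v(v')$ for the image under $i_v$ of the edge at $v$ on the path to $v'$.

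The engine is one elementary observation. For $x\in X$ set $\alpha_n(x):=\phi_{n,v}(a_n(x))$ and $\beta_n(x):=\phi_{n,v'}(a_n(x))$, so that $g_n(\alpha_n(x))=\beta_n(x)$; by the hypothesis $\cal A_n\to\T^X$ we have $\alpha_n(x)\to a_v(x)$ and $\beta_n(x)\to a_{v'}(x)$. Consequently, if $g_n\to q$ locally uniformly off $p$ and $a_v(x)\neq p$, then evaluating along the sequence $\alpha_n(x)$ (which eventually stays in a compact set avoiding $p$) gives $g_n(\alpha_n(x))\to q$, forcing $a_{v'}(x)=q$; whereas when $a_v(x)=p$ no constraint is obtained. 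This is the only computation I need, applied to well-chosen leaves.

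To locate the limit I use the valence hypotheses. As $v$ has at least three edges, there are two edges $e_1\neq e_2$ at $v$ distinct from the edge pointing towards $v'$; picking leaves $x_1,x_2\in X$ in their respective components (each branch of the finite tree contains a leaf) yields $a_v(x_1)=i_v(e_1)$ and $a_v(x_2)=i_v(e_2)$, two distinct points, both different from $i_v(v')$. Since $x_1,x_2$ lie on the $v$-side of $v'$, one has $a_{v'}(x_1)=a_{v'}(x_2)=i_{v'}(v)$. If $g_n$ subconverged to a Möbius map $g$, then $g(a_v(x_1))=g(a_v(x_2))=i_{v'}(v)$ with $a_v(x_1)\neq a_v(x_2)$, contradicting injectivity; so we are in the degenerate case. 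As at most one of $a_v(x_1),a_v(x_2)$ can equal $p$, applying the observation to the other one gives $q=i_{v'}(v)$.

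Finally, to pin down $p$, I use a leaf beyond $v'$: since $v'$ also has at least three edges there is an edge $e_3$ at $v'$ other than the one towards $v$, and a leaf $x_3$ in its component, for which $a_{v'}(x_3)=i_{v'}(e_3)\neq i_{v'}(v)=q$, while the path from $x_3$ to $v$ runs through $v'$, so $a_v(x_3)=i_v(v')$. Were $p\neq i_v(v')$, the observation would force $a_{v'}(x_3)=q$, a contradiction; hence $p=i_v(v')$. This identifies every subsequential limit as the constant $i_{v'}(v)$ off $i_v(v')$, independently of the subsequence, which gives the lemma. The only delicate point is combining the subsequence extraction with the normalization underlying the Möbius dichotomy; the remainder is bookkeeping with the maps $a_v$ and $a_{v'}$.
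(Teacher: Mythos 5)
Your argument is correct, but note first that this paper contains no proof of Lemma \ref{noncomp} to compare against: the lemma is recalled without proof from \cite{A2}, so your proposal can only be judged on its own terms and against the argument given there. On its own terms it is sound: the compactness dichotomy for M\"obius transformations (represent $g_n=\phi_{n,v'}\circ\phi_{n,v}^{-1}$ by unit-norm matrices, extract a limit $M$, and split according to $\det M\neq 0$ or $M$ of rank one) is classical, your evaluation principle $g_n(\alpha_n(x))\to q$ whenever $a_v(x)\neq p$ is a correct use of locally uniform convergence, and the bookkeeping with leaves is right --- valence $\geq 3$ at $v$ supplies the two leaves $x_1,x_2$ with distinct limits $a_v(x_i)=i_v(e_i)$ but common image $a_{v'}(x_i)=i_{v'}(v)$, which kills the nondegenerate case and identifies $q$, while one spare edge at $v'$ supplies $x_3$ with $a_v(x_3)=i_v(v')$ and $a_{v'}(x_3)\neq i_{v'}(v)$, which pins the pole $p$ at $i_v(v')$; the subsequence-of-subsequence argument then upgrades to convergence of the full sequence. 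The route differs from the one in \cite{A2}, which is coordinate-based: there one normalizes the charts using marked leaves common to the two vertices (as is done repeatedly in the present paper, e.g.\ in the proofs of Lemmas \ref{branchcrit} and \ref{multiplic}, where $\sigma_{n,v'}\circ\sigma_{n,v}^{-1}$ fixes $0$ and $\infty$, hence is a similitude $z\mapsto\lambda_n z$) and shows directly that $\lambda_n\to\infty$, from which the locally uniform collapse to the constant follows by explicit computation. Your version is coordinate-free and isolates the general principle (rank-one degeneration of projective maps), which makes it more portable; the normalization argument of \cite{A2} buys explicit scaling factors $\lambda_n$, which is precisely what the annuli lemma of this paper needs downstream. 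One small remark: your proof only uses valence $\geq 2$ at $v'$; the hypothesis of three edges at $v'$ is needed for the symmetric statement with the roles of $v$ and $v'$ exchanged, not for the direction you proved.
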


\begin{definition}[Non dynamical convergence]
Let ${ \F}:{\T}^Y\to { \T}^Z$ be a cover between trees of spheres of portrait ${\bf F}$. A sequence ${ \F}_n:=(f_n,a_n^Y,a_n^Z)$ of marked spheres covers converges to ${ \F}$ if their portrait is ${\bf F}$ and if for every pair of internal vertices $v$ and $w:=F(v)$, there exist sequences of isomorphisms $\phi_{n,v}^Y:\S_n^Y\to \S_v$ and $\phi_{n,w}^Z:\S_n^Z\to \S_w$ such that 
\begin{itemize}
\item $\phi_{n,v}^Y\circ a_n^Y:Y\to \S_v$ converges to $a_v^Y:Y\to \S_v$, 
\item $\phi_{n,w}^Z\circ a_n^Z:Z\to \S_w$ converges to $a_w^Z:Z\to \S_w$ and 
\item $\phi_{n,w}^Z\circ f_n\circ (\phi_{n,v}^Y )^{-1}:\S_v\to \S_w$ converges locally uniformly outside $Y_v$ to ${f_v:\S_v\to \S_w}$. 
\end{itemize}
\end{definition}

We write $\F_n\rightarrow \F$ or $\F_n\underset{(\phi^Y_n,\phi^Z_n)}\longrightarrow  \F.$

\begin{definition}[Dynamical convergence]\label{defcvdyn}
 A sequence $({ \F}_n)_n$ in $\Rat_{{\bf F},X}$ converges to $({\F},{ \T}^X)\in\Dyn_{{\bf F},X}$ 
if  $$\displaystyle { \F}_n\underset{\phi_n^Y,\phi_n^Z}\longrightarrow{ \F}\quad\text{with}\quad\phi_{n,v}^Y=\phi_{n,v}^Z$$ for every vertices $v\in IV^X$. We say that $({ \F},{ \T}^X)$ is approximable by $({ \F}_n)_n$.
\end{definition}

We write  $$\displaystyle { \F}_n\overset{\lhd}{\underset{\phi_n^Y,\phi_n^Z}\longrightarrow}{ \F}.$$ We denote by $\partial \Rat_{{\bf F},X}$ the subset of $\Dyn_{{\bf F},X}$ consisting of elements approximable by a sequence in $\Rat_{{\bf F},X}$.

The following lemma and its corollary explain relate the convergence in the setting of trees of spheres to the convergence in $\rat_D$.

\begin{lemma}\label{cvu}
Let ${\cal F}:{\cal T}^Y\to{\cal T}^Z$ be a cover between trees of spheres with portrait ${\bf F}$ and degree $D$. Let
$v\in IV^Y$ with $\deg(v)=D$ and let ${\cal
F}_n:=(f_n,a_n^Y,a_n^Z)$ be a sequence of covers between trees of spheres that satisfies $\displaystyle {\cal F}_n\underset{\phi^Y_n,\phi^Z_n}\longrightarrow{\cal F}$. Then the sequence $\phi_{n,F(v)}^Z\circ f_n\circ (\phi_{n,v}^Y)^{-1}:\S_v\to \S_{F(v)}$ converges uniformly
to $f_v:\S_v\to \S_{F(v)}$.
\end{lemma}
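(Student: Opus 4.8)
The plan is to reduce the statement to a standard fact about degenerating sequences of rational maps of fixed degree, the whole point being to promote the \emph{locally} uniform convergence built into the definition of non-dynamical convergence to \emph{uniform} convergence on the entire sphere. Write $w:=F(v)$ and set $g_n:=\phi_{n,w}^Z\circ f_n\circ(\phi_{n,v}^Y)^{-1}:\S_v\to\S_w$. Since the maps $\phi$ are projective isomorphisms and each $f_n$ realizes the portrait ${\bf F}$ of degree $D$, every $g_n$ is a rational map of degree exactly $D$. By the definition of $\F_n\underset{\phi^Y_n,\phi^Z_n}\longrightarrow\F$, the sequence $g_n$ converges to $f_v$ locally uniformly on the complement of the finite set $Y_v$. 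Finally, the hypothesis $\deg(v)=D$ says precisely that $f_v$ itself has degree $D$, i.e.\ the limit map suffers no loss of degree.

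The heart of the argument is then the following elementary principle: a sequence of degree-$D$ rational maps converging locally uniformly off a finite set to a map which is again of degree $D$ must converge uniformly on all of $\S_v$. To prove it I would work in the natural compactification of $\Rat_D$. Represent each $g_n$ by a pair of degree-$D$ homogeneous polynomials $(P_n,Q_n)$ in the homogeneous coordinates of $\S_v$, normalized so that the coefficient vector lies on the unit sphere of $\C^{2D+2}$; by compactness of $\P^{2D+1}$, pass to a subsequence along which $[P_n:Q_n]\to[P:Q]$. Two cases arise according to whether the resultant $\mathrm{Res}(P,Q)$ vanishes.

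If $\mathrm{Res}(P,Q)\neq 0$ then $[P:Q]$ is a genuine element of $\Rat_D$, so algebraic convergence of the coefficients gives uniform convergence $g_n\to[P:Q]$ on $\S_v$; comparing with the locally uniform limit off $Y_v$ forces $[P:Q]=f_v$, which is the desired conclusion. The case to be excluded is $\mathrm{Res}(P,Q)=0$: then $P$ and $Q$ share a non-constant common factor, so after cancellation $[P:Q]$ is a rational map $H$ with $\deg H<D$, and $g_n\to H$ locally uniformly off the finite, nonempty set of common zeros of $P$ and $Q$. On the complement of this set together with $Y_v$ the two locally uniform limits of the same sequence must agree, so $H=f_v$ as meromorphic maps of $\S_v$; but $\deg H<D=\deg f_v$, a contradiction. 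Hence only the first case occurs, along every subsequence the limit is $f_v$, and therefore the whole sequence $g_n$ converges uniformly to $f_v$.

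The one genuinely delicate point is the exclusion of the degenerate case, and it is exactly here that the hypothesis $\deg(v)=D$ is indispensable. Without it the limit $f_v$ could have degree strictly less than $D$, the missing degree escaping into the branches attached at the points of $Y_v$ (the familiar bubbling phenomenon of the tree-of-spheres formalism), and the convergence would genuinely fail to be uniform at those points. The identity $\deg(f_v)=\deg(\F)=D$ rules out any such escape of mass, which is precisely what keeps the resultant of the limiting coefficient vector from vanishing.
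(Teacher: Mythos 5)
Your proof is correct, but one point of bookkeeping first: this paper does not actually prove Lemma \ref{cvu} --- it is recalled without proof from \cite{A2}, together with Lemmas \ref{noncomp} and \ref{convit}, so there is no in-paper argument to compare yours against line by line. On its own merits, your reduction is sound: $g_n:=\phi_{n,F(v)}^Z\circ f_n\circ(\phi_{n,v}^Y)^{-1}$ is indeed a degree-$D$ rational map from $\S_v$ to $\S_{F(v)}$ (the portrait has degree $D$ and the $\phi$'s are projective), the definition of convergence gives $g_n\to f_v$ locally uniformly off the finite set $Y_v$, and the hypothesis $\deg(v)=D$ enters exactly where you say it does. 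The subsequence argument in $\P^{2D+1}$ is complete: a nondegenerate limit must agree with $f_v$ by density, and a degenerate limit would give a reduced map $H$ of degree $<D$ agreeing with $f_v$ off the holes and hence everywhere, contradicting $\deg f_v=D$. The one ingredient you quote rather than prove --- that when $\mathrm{Res}(P,Q)=0$ the maps $[P_n:Q_n]$ converge locally uniformly to the reduced map away from the zeros of the common factor --- is the standard degenerate-limit lemma of DeMarco, also used by Kiwi in precisely this rescaling context, so invoking it is legitimate, though in a self-contained write-up you should cite or reprove it. By way of comparison, the arguments in \cite{A2} and elsewhere in this paper (see the maximum-modulus steps in the proof of the branches lemma, Lemma \ref{branchcrit}) favor a more hands-on normal-families route: one checks that no poles or extra preimages accumulate on small disks around the points of $Y_v$ and then upgrades locally uniform convergence to uniform convergence on those disks by the maximum modulus principle. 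That route stays inside the geometric formalism of the paper; yours buys a shorter global statement in which the non-escape of degree is made transparent by the nonvanishing of the resultant, at the cost of importing the algebraic compactification $\overline{\Rat}_D\subset\P^{2D+1}$.
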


\begin{corollary}\label{cvuutil}
Let $({\cal F},{\cal T}^X)\in\Dyn_{{\bf F},X}$ dynamically approximable by ${\cal F}_n:=(f_n,a_n^Y,a_n^Z)$. Suppose that $v\in IV^X$ is a fixed vertex such that $\deg(v) =D= \deg({\bf F})$. Then the sequence $[f_n]\in \rat_D$ converges to the conjugacy class $[f_v]\in \rat_D$. 
\end{corollary}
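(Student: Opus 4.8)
The plan is to reduce the statement directly to Lemma \ref{cvu} by exploiting the two special features of the hypothesis: that $v$ is \emph{fixed} (so $F(v)=v$) and that the convergence is \emph{dynamical} (so the source and target families of isomorphisms agree on $\S_v$).

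First I would set $\psi_n := \phi_{n,v}^Y$. By Definition \ref{defcvdyn}, dynamical convergence means precisely that $\phi_{n,v}^Y = \phi_{n,v}^Z$ for every internal vertex of $T^X$; since $v\in IV^X$ this gives $\phi_{n,v}^Z = \psi_n$ as well. Because $v$ is fixed we also have $F(v)=v$, hence $\phi_{n,F(v)}^Z = \phi_{n,v}^Z = \psi_n$. Thus both identifications appearing in Lemma \ref{cvu} collapse to the single Moebius map $\psi_n$.

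Next, since $\deg(v)=D$ equals the global degree of the cover, Lemma \ref{cvu} applies at $v$ and yields that
$$\phi_{n,F(v)}^Z \circ f_n \circ (\phi_{n,v}^Y)^{-1} \longrightarrow f_v$$
uniformly. Substituting the identifications above, the left-hand side is exactly $\psi_n \circ f_n \circ \psi_n^{-1}$, so the conjugates of $f_n$ by the isomorphisms $\psi_n$ converge uniformly to the degree-$D$ rational map $f_v$. I would then translate this into convergence of conjugacy classes: uniform convergence of the degree-$D$ maps $\psi_n \circ f_n \circ \psi_n^{-1}$ to the degree-$D$ map $f_v$ is convergence in $\Rat_D$, and since the projection $\Rat_D\to\rat_D$ is continuous we get $[\psi_n \circ f_n \circ \psi_n^{-1}]\to[f_v]$ in $\rat_D$. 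As $\psi_n$ is an isomorphism, $[\psi_n \circ f_n \circ \psi_n^{-1}]=[f_n]$, whence $[f_n]\to[f_v]$.

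The only point requiring genuine care — and the step I expect to be the main obstacle — is the last one: that uniform convergence of rational maps to a limit of the \emph{same} degree $D$ is convergence in $\Rat_D$, i.e. that no drop of degree occurs in the limit. This is where the hypothesis $\deg(v)=D$ is essential, since it guarantees $\deg f_v = D$ and rules out escape of mass; without full degree at $v$, Lemma \ref{cvu} only provides local uniform convergence off the exceptional set and the limiting conjugacy class would not be controlled.
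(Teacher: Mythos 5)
Your proof is correct and takes essentially the intended route: the paper states this corollary (recalled from \cite{A2}) immediately after Lemma \ref{cvu} precisely because it is the specialization you describe, using dynamical convergence ($\phi^Y_{n,v}=\phi^Z_{n,v}$ for $v\in IV^X$) together with $F(v)=v$ to collapse both identifications to a single Moebius map $\psi_n$, so that $\psi_n\circ f_n\circ\psi_n^{-1}\to f_v$ uniformly and hence $[f_n]\to[f_v]$ in $\rat_D$. Your closing caveat is also well placed: uniform convergence to a limit of the \emph{same} degree $D$ is what gives convergence in $\Rat_D$ with no degree drop, and this is exactly what the hypothesis $\deg(v)=D$ secures via Lemma \ref{cvu}.
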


The following lemma explains the relation between the combinatorial map $F$ and the $\phi_{n,v}$. For $k\in\N$, we denote by $IV^{F^k}$ the set of internal vertices of $T^Y$ on which $F^k$ is well defined.

\begin{lemma}\label{convit}
Let $({ \F},{ \T}^X)$ be dynamically approximable by $({ \F}_n)_n$. If $v\in IV(F^k)$ and $w:=F^k(v)$, then $(\phi_{n,w}\circ f_n^k\circ \phi_{n,v}^{-1})_n$ converges locally uniformly to $f_v^k$ outside a finite set.
\end{lemma}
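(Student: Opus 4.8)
The plan is to reduce the statement to the one-step convergence built into the definition of $\F_n\to\F$, and then to propagate it through a composition principle. Write $v_j:=F^j(v)$ for $0\le j\le k$, so that $v_0=v$, $v_k=w$ and $f_v^k=f_{v_{k-1}}\circ\cdots\circ f_{v_0}$. Since $v\in IV(F^k)$, the intermediate vertices $v_1,\dots,v_{k-1}$ lie in $IV^X$, so by Definition \ref{defcvdyn} we have $\phi_{n,v_j}^Y=\phi_{n,v_j}^Z=:\phi_{n,v_j}$ for these $j$. Inserting $\phi_{n,v_j}^{-1}\circ\phi_{n,v_j}=\Id$ between consecutive copies of $f_n$ yields the factorization
\[
\phi_{n,w}\circ f_n^{\,k}\circ\phi_{n,v}^{-1}
=\bigl(\phi_{n,v_k}\circ f_n\circ\phi_{n,v_{k-1}}^{-1}\bigr)\circ\cdots\circ\bigl(\phi_{n,v_1}\circ f_n\circ\phi_{n,v_0}^{-1}\bigr),
\]
where the identification of the intermediate isomorphisms is precisely what makes the chaining legitimate. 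By the definition of non dynamical convergence, the $j$-th factor $g_n^{(j)}:=\phi_{n,v_{j+1}}\circ f_n\circ\phi_{n,v_j}^{-1}$ converges to $f_{v_j}$ locally uniformly outside the finite set $Y_{v_j}\subset\St_{v_j}$.

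The heart of the argument, and the step I expect to be the main obstacle, is an elementary composition principle, which I would prove and then apply inductively: if $g_n\to g$ locally uniformly outside a finite set $E_g$, if $h_n\to h$ locally uniformly outside a finite set $E_h$, and if $g$ is a non-constant holomorphic map of the sphere, then $h_n\circ g_n\to h\circ g$ locally uniformly outside the finite set $E_g\cup g^{-1}(E_h)$. The delicate point is that the convergences are only locally uniform away from finite sets, so one must control where the inner maps send compact sets. Concretely, I would fix $x_0\notin E_g\cup g^{-1}(E_h)$; since $g(x_0)\notin E_h$ and $E_h$ is finite, continuity of $g$ lets me choose a compact neighbourhood $K$ of $x_0$ avoiding $E_g$ and with $g(K)$ contained in a compact set $L$ disjoint from $E_h$. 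On $K$ the convergence $g_n\to g$ is uniform, hence $g_n(K)\subseteq L$ for $n$ large; on $L$ the convergence $h_n\to h$ is uniform and $h$ is uniformly continuous. Splitting, in the spherical metric,
\[
h_n(g_n(x))-h(g(x))=\bigl(h_n(g_n(x))-h(g_n(x))\bigr)+\bigl(h(g_n(x))-h(g(x))\bigr),
\]
both terms tend to $0$ uniformly on $K$. Finiteness of $g^{-1}(E_h)$ is exactly where non-constancy of $g$ is used.

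It then remains to apply this principle inductively. Each $f_{v_j}$ is a holomorphic ramified cover of the sphere, hence non-constant, so every partial iterate $f_v^{\,j}$ is non-constant and the preimages $(f_v^{\,j})^{-1}(Y_{v_j})$ stay finite. Starting from the one-step convergence of $g_n^{(0)}$ outside $Y_{v_0}$ and composing successively on the left with $g_n^{(1)},\dots,g_n^{(k-1)}$, taking at each stage the partial composition $\phi_{n,v_j}\circ f_n^{\,j}\circ\phi_{n,v_0}^{-1}$ as the inner sequence (converging to the non-constant $f_v^{\,j}$) and $g_n^{(j)}$ as the outer sequence, the composition principle gives convergence of $\phi_{n,w}\circ f_n^{\,k}\circ\phi_{n,v}^{-1}$ to $f_v^k$ locally uniformly outside the finite exceptional set
\[
E:=\bigcup_{j=0}^{k-1}(f_v^{\,j})^{-1}(Y_{v_j}),
\]
which is finite as a finite union of preimages of finite sets under non-constant maps. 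This establishes the lemma.
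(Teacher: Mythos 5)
Your proof is correct. The paper itself states Lemma \ref{convit} without proof (it is recalled from \cite{A2}), and your argument --- telescoping $f_n^k$ through the charts $\phi_{n,v_j}$, which is legitimate precisely because $\phi_{n,v_j}^Y=\phi_{n,v_j}^Z$ at the intermediate vertices $v_j\in IV^X$, then chaining the one-step convergences of Definition \ref{defcvdyn} via a composition principle whose only nontrivial ingredient (finiteness of $(f_v^{\,j})^{-1}(Y_{v_j})$, via non-constancy of the partial iterates) you correctly isolate --- is exactly the mechanism the paper deploys whenever it uses this lemma, e.g.\ the ``by composition'' step in the Convergence paragraph of the annuli lemma's proof and the factorization in the proof of Corollary \ref{divrev}, so it can be regarded as essentially the intended proof.
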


We can deduce the following corollary. Quite informally, this corollary explains that if we use $w\neq F^{k}(v)$ in the formula above, then $\phi_{n,w}\circ f_n^k\circ \phi_{n,v}^{-1}$ converges to a constant pointing the the branch on $\S_w$ that contains the $F^{k}(v)$.

\begin{corollary}\label{divrev} Let $({\cal F:\T^Y\to\T^Z},{\cal T}^X)$ be dynamically approximable by $(\F_n)_n$. Let ${v\in IV(F^k)}$, let $w\neq v$ be another internal vertex of $\T^Z$ and let $e\in E_w$ be such that $F^k(v)\in B_w(e)$. Then $(\phi_{n,w}\circ f_n^k\circ \phi_{n,v}^{-1})_n$ converges locally uniformly to the constant $i_w(e)$ outside a finite set.
 \end{corollary}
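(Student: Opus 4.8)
The plan is to deduce this corollary from Lemma~\ref{convit} together with Lemma~\ref{noncomp}, treating the hypothesis $F^k(v)\in B_w(e)$ as the precise condition that forces the limit to collapse to the constant $i_w(e)$ rather than to the map $f_v^k$. First I would set $w':=F^k(v)$, so that by hypothesis $w'$ lies in the branch $B_w(e)$, i.e. in the connected component of $T^Z-\{w\}$ determined by the edge $e$. The key point is that $w$ and $w'$ are \emph{distinct} internal vertices of $\T^X$ (or rather of $\T^Z$): since $w'\in B_w(e)$, the vertex $w$ is not equal to $w'$, and the first vertex on the path from $w'$ toward $w$ is reached by following $e$, so that $a_w(w')=i_w(e)$. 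This is the geometric fact that identifies the target constant.

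Next I would factor the conjugated iterate as a composition. Write
\[
\phi_{n,w}\circ f_n^k\circ \phi_{n,v}^{-1}
= \bigl(\phi_{n,w}\circ \phi_{n,w'}^{-1}\bigr)\circ\bigl(\phi_{n,w'}\circ f_n^k\circ \phi_{n,v}^{-1}\bigr).
\]
By Lemma~\ref{convit} applied at the vertex $v$ with image $w'=F^k(v)$, the second factor $\phi_{n,w'}\circ f_n^k\circ \phi_{n,v}^{-1}$ converges locally uniformly, outside a finite set, to $f_v^k:\S_v\to\S_{w'}$. By Lemma~\ref{noncomp} applied to the distinct internal vertices $w$ and $w'$ of $\T^X$, the first factor $\phi_{n,w}\circ\phi_{n,w'}^{-1}$ converges locally uniformly outside $i_{w'}(w)$ to the constant $i_w(w')=i_w(e)$. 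Composing, the image points of the first factor are eventually trapped in a neighbourhood of $i_w(e)$, so the whole product converges locally uniformly, outside a finite set, to the constant $i_w(e)$.

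The one genuine subtlety — and the step I expect to require the most care — is the interaction of the two exceptional sets when composing the two locally uniform convergences. Lemma~\ref{noncomp} gives convergence of $\phi_{n,w}\circ\phi_{n,w'}^{-1}$ only \emph{away from} the point $i_{w'}(w)\in\S_{w'}$, so I must check that the image $f_v^k(\S_v)$ does not force the composition to pass through that bad point persistently. Concretely, one argues that for $z$ outside the finite exceptional set of Lemma~\ref{convit}, the values $\phi_{n,w'}\circ f_n^k\circ\phi_{n,v}^{-1}(z)$ accumulate on $f_v^k(z)\in\S_{w'}$; as long as these limit values avoid $i_{w'}(w)$, the outer map sends them uniformly to $i_w(e)$. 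Since $f_v^k$ is a nonconstant holomorphic map, the preimage $ (f_v^k)^{-1}\bigl(i_{w'}(w)\bigr)$ is finite, so enlarging the finite exceptional set by these finitely many points yields locally uniform convergence to $i_w(e)$ on the complement. This bookkeeping of the combined finite exceptional set is exactly the place where one must be precise, but it is routine once the factorization above is in hand.
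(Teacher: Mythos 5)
Your proof is correct and follows essentially the same route as the paper: the same factorization $\phi_{n,w}\circ f_n^k\circ \phi_{n,v}^{-1}=(\phi_{n,w}\circ\phi^{-1}_{n,F^k(v)})\circ(\phi_{n,F^k(v)}\circ f_n^k\circ \phi_{n,v}^{-1})$, with Lemma~\ref{convit} for the inner factor and Lemma~\ref{noncomp} for the outer one. Your careful point about the exceptional sets is exactly what the paper encodes by working outside $Y(f^k)$, where the limit $f_v^k$ has image avoiding $Z_{F^k(v)}$ (hence avoiding $i_{F^k(v)}(w)$); your enlargement by the finite set $(f_v^k)^{-1}\bigl(i_{F^k(v)}(w)\bigr)$ is the same device made explicit.
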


\begin{proof}
Indeed, we have $$\phi_{n,w}\circ f_n^k\circ \phi_{n,v}^{-1}=(\phi_{n,w}\circ\phi^{-1}_{n,F^k(v)})\circ(\phi_{n,F^k(v)}\circ f_n^k\circ \phi_{n,v}^{-1}).$$
According to lemma  \ref{convit}, the map on the right converges locally uniformly outside a finite set to a non constant map. According to lemma  \ref{noncomp}, the map on the left converges locally uniformly to $i_w(e)$ outside $i_{F^k(v)}(w)$. The conclusion follows. 
\end{proof}

\subsection{Branches and annuli}\label{ann}

Given a tree $T$, the most natural open subgraphs to look at are those defined in the following.
\begin{definition}[Branch]
For $v$ a vertex of a tree $T$ and for $\star\in T-\{v\}$, a branch of $\star$ on $v$ is the connected component of $T-\{v\}$ containing $\star$. It is denoted by $B_v(\star)$.
\end{definition} 

\begin{definition}[Annulus]\label{defann}
If $v_1$ and $v_2$ are two distinct internal vertices of $T$, the annulus $A:=]\!]v_1,v_2[\![$ is the intersection of two branches $B_{v_1}(v_2)$ and $B_{v_2}(v_1)$. We define $$[\![v_1,v_2]\!]:=\overline A:=A\cup[v_1,v_2].$$
\end{definition}

Note that $\overline A=A\cup\{v_1,v_2\}$. More generally, for every connected subset $T'$ of the tree $T$, we denote by $\overline T'$ the smallest subtree of $T$ containing $T'$. We proved in \cite{A2} that given $T''$ an open, non empty and connected subset of $T^Z$ and given $T'$ a connected component of $F^{-1}(T'')$ there is a natural cover $\overline {\cal F}:\overline {\cal T}'\to \overline {\cal T}''$ defined by 
\begin{itemize}
\item $\overline F:= F: \overline T'\to \overline T''$ and 
\item $\overline f_v:= f_v$ if $v\in V'-Y'$
\end{itemize}
which is a cover between trees of spheres. 
Recall the Riemann-Hurwitz formula for trees proven in \cite{A2}:

\begin{proposition}[Riemann-Hurwitz Formula]\label{RH} Let ${\cal F}:{\cal T}^Y\to {\cal T}^Z$ be a cover between trees of spheres and $T''$ be a sub-tree of $T^Z$. Let $T'$ be a connected component of $F^{-1}(T'')$. Then we have $$\chi_{T^Y}(T')=\deg(\F|_{\overline{\cal T'}})\cdot \chi_{T^Z}(T'')-\sum_{y\in \Crit\F\cap T'}\mult(y).$$
\end{proposition}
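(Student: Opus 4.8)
The plan is to reduce the tree formula to the classical Riemann--Hurwitz formula applied sphere by sphere, and then to sum these local identities using the degree structure of the tree cover.

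First I would record the two features of the definitions that make such a reduction possible. On the one hand, for each internal vertex $v$ of $T'$ with image $w=F(v)$, the map $f_v:\mathcal S_v\to\mathcal S_w$ is a ramified cover of the sphere whose restriction $\mathcal S_v-Y_v\to\mathcal S_w-Z_w$ is an (unramified) cover; hence every critical point of $f_v$ sits at a marked point $i_v(e)$, and its local degree $\deg_{i_v(e)}(f_v)$ depends only on the edge $e$ by the compatibility condition (third bullet in the definition of a cover between trees of spheres). On the other hand, $\chi_{T^Y}(T')$ is the Euler characteristic of the punctured nodal surface obtained by gluing the spheres $\mathcal S_v$, $v\in IV(T')$, along the marked points of the internal edges of $T'$ and removing the marked points of the edges leaving $T'$; concretely $\chi_{T^Y}(T')=2I'-N'-P'$, where $I'$ counts the internal vertices, $N'$ the internal edges (nodes), and $P'$ the boundary marked points of $T'$, and likewise for $T''$.

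Next I would write the classical Riemann--Hurwitz formula for each $f_v$, using that all of its ramification is carried by the marked points:
$$2=2\deg(f_v)-\sum_{e\in E_v}\bigl(\deg_{i_v(e)}(f_v)-1\bigr),$$
and sum over $v\in IV(T')$. The left side gives $2I'$. For the right side I would invoke the global degree $D=\deg(\F|_{\overline{\T'}})$ of the tree cover together with its fiber-degree identities: over each internal vertex $w$ of $T''$ the sphere-degrees of its preimages sum to $D$, and over each edge of $T''$ the edge-local-degrees of its preimages sum to $D$. These yield $\sum_v\deg(f_v)=D\,I''$ and, after noting that each internal edge of $T'$ is counted twice in $\sum_v\sum_{e\in E_v}$ while each boundary marked point is counted once, the identities $\sum_{\text{nodes}}(\deg(e)-1)=DN''-N'$ and $\sum_{\text{bd}}(\deg(e)-1)=DP''-P'$. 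Substituting and simplifying, the summed formula collapses to
$$\chi_{T^Y}(T')=D\,\chi_{T^Z}(T'')-\sum_{e\ \text{node of }T'}\bigl(\deg(e)-1\bigr),$$
and since the critical points of $\F$ lying in the open set $T'$ are exactly those at its internal edges (the ramification carried by edges leaving $T'$ sits at removed punctures, hence is not counted), the last sum equals $\sum_{y\in\Crit\F\cap T'}\mult(y)$, which is the claim.

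The delicate points, and where I expect to spend the most care, are the two fiber-degree identities and the surrounding combinatorics. The hard part will be the structural input, drawn from \cite{A2}, that a cover between trees of spheres carries a well-defined global degree realized fiberwise both over vertices and over edges; with that in hand the rest is bookkeeping, but the bookkeeping must keep scrupulous track of (i) the double-counting of nodes when ramification is organized per vertex, and (ii) the convention that $T'$ and $T''$ are open, so that ramification located at the boundary marked points is attached to removed punctures and is therefore excluded from $\Crit\F\cap T'$. Getting (ii) wrong produces a spurious boundary term $DP''-P'$; it is precisely the exclusion of boundary ramification that makes the two sides agree.
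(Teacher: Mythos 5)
You should first know that this paper does not contain a proof of Proposition \ref{RH}: it is explicitly recalled from \cite{A2}, so your proposal can only be judged on its own merits. Your strategy --- write the classical Riemann--Hurwitz formula $2=2\deg(f_v)-\sum_{e\in E_v}\bigl(\deg_{i_v(e)}f_v-1\bigr)$ for each sphere map (legitimate since the restriction $f_v:\St_v-Y_v\to\St_w-Z_w$ is a covering, so all ramification sits at marked points), then sum over the internal vertices of $T'$ using the fiberwise degree identities of the restricted cover $\overline{\F}:\overline{\T'}\to\overline{\T''}$ --- is indeed the natural proof, and your two fiber identities over internal and boundary edges are correct. But the execution breaks at the decisive step. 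In this formalism $\Crit\F$ is the set of critical \emph{leaves}, not of ramified internal edges: $\deg$ is a function on $Y$, $\mult(y)=\deg(y)-1$, and $\sum_{a\in Y}(\deg(a)-1)=2d-2$; this is also how the present paper uses the formula (Corollary \ref{branchdeg1} deduces $\mult T'=0$ from the absence of critical \emph{leaves}, and Lemma \ref{corollaribilisation} counts critical leaves). Consequently your collapsed identity $\chi_{T^Y}(T')=D\,\chi_{T^Z}(T'')-\sum_{e\ \text{node}}(\deg(e)-1)$ is false, not merely differently phrased: take $T^Y$, $T^Z$ with a single internal vertex and $T'=T^Y$, $T''=T^Z$; there are no nodes, so your formula forces $2=2D$, while the true correction term is the leaf contribution $2D-2$.

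The bookkeeping fails for two identifiable reasons. First, your normalization $\chi_{T^Y}(T')=2I'-N'-P'$ weights each node once, whereas the $\chi$ this statement needs (branch $\chi=1$, annulus $\chi=0$, whole tree $\chi=2$, as exploited in Corollaries \ref{branchdeg1} and \ref{anneaudeg1}) weights each node twice: $\chi=2I'-2N'-P'$, equivalently $2$ minus the number of boundary edges; under your convention a branch with two internal vertices would already have $\chi=2$. Second, in $\sum_{v}\sum_{e\in E_v}\bigl(\deg_{i_v(e)}f_v-1\bigr)$ you account for nodes (twice) and boundary edges (once) but omit the edges joining internal vertices of $T'$ to leaves lying in $T'$ --- and that omitted term is precisely the one that must survive. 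With the corrected weights, summing gives $2I'=2DI''-2(DN''-N')-(DP''-P')-\sum_{e\ \text{leaf edge of }T'}(\deg(e)-1)$, hence
\[
2I'-2N'-P' \;=\; D\,(2I''-2N''-P'')\;-\;\sum_{e\ \text{leaf edge of }T'}\bigl(\deg(e)-1\bigr),
\]
so the node and boundary sums are entirely absorbed into $D\,\chi_{T^Z}(T'')-\chi_{T^Y}(T')$ rather than appearing as the correction; and since each leaf $y\in T'$ carries exactly one edge, whose degree is the portrait degree $\deg(y)$, the surviving sum is exactly $\sum_{y\in\Crit\F\cap T'}\mult(y)$. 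So the skeleton of your argument is the right one, but as written it proves a false identity; the repair is to fix the $\chi$ weights, include the leaf edges in the double count, and relocate the critical locus from nodes to leaves.
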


From this formula we deduce the following useful corollaries.


\begin{corollary}\label{branchdeg1}
If $B$ is a branch of $T^Y$ that does not contain any critical leaf, then its image $F(B)$ is a branch and $F:B\to F(B)$ is a bijection. 
\end{corollary}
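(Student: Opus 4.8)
The statement to prove is Corollary~\ref{branchdeg1}: if $B$ is a branch of $T^Y$ containing no critical leaf, then $F(B)$ is a branch and $F:B\to F(B)$ is a bijection. The natural tool is the Riemann--Hurwitz formula for trees (Proposition~\ref{RH}), applied to a suitable subtree $T''$ of $T^Z$ and to $B$ as a connected component of its preimage. The plan is to first identify the correct target subtree $T''$, then show the restriction $\overline{\mathcal F}:\overline{\mathcal T}'\to\overline{\mathcal T}''$ has degree $1$, and finally argue that a degree-$1$ cover between trees of spheres is a bijection.

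First I would take $B=B_w(\star)$ a branch on some internal vertex $w$ of $T^Y$, with $F(w)$ the image vertex. I would set $T'':=F(B)$ (or rather the appropriate branch of $T^Z$ on $F(w)$ containing $F(\star)$) and check that $B$ is exactly a connected component of $F^{-1}(T'')$; this uses that $F$ maps leaves to leaves, internal vertices to internal vertices, edges to edges, together with Lemma~\ref{definiX}-type arguments that $F$ respects the tree structure. Since $B$ is an open connected subgraph (a branch minus its base vertex structure), its Euler characteristic $\chi_{T^Y}(B)$ should be computable; for a branch the combinatorics give a clean value, and similarly for $T''=F(B)$.

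Next I would plug into Riemann--Hurwitz. The key point is that $B$ contains no critical leaf, so the ramification sum $\sum_{y\in\Crit\mathcal F\cap B}\mult(y)$ should vanish (one must be careful that "critical leaf" rules out all the multiplicity contributions inside $B$). The formula then reads $\chi_{T^Y}(B)=\deg(\overline{\mathcal F})\cdot\chi_{T^Z}(F(B))$. Comparing the Euler characteristics of a branch and its image forces $\deg(\overline{\mathcal F})=1$, since both $\chi$ values are equal (a branch and its image branch have matching combinatorial type when no ramification occurs). With degree $1$, each $f_v$ on the covering is an isomorphism and $F$ is injective on vertices and edges, hence $F:B\to F(B)$ is a bijection and $F(B)$ is genuinely a branch rather than a proper subtree.

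The main obstacle I anticipate is the bookkeeping of where critical points can hide. The hypothesis forbids critical \emph{leaves} in $B$, but a priori $f_v$ could ramify at an internal edge of $B$ or at the base point; I would need to argue, using the compatibility of local degrees across edges in the definition of a cover between trees of spheres and the fact that all ramification of $\mathcal F$ is detected by critical leaves (the multiplicity of $\mathcal F$ being concentrated on $\Crit\mathcal F$), that no such hidden ramification survives inside $B$. Verifying that the Euler-characteristic comparison is tight enough to force degree exactly $1$ — as opposed to merely bounding it — is the delicate quantitative step, and it is where I would spend the most care.
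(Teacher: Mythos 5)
Your plan is the same in substance as the paper's proof --- same choice of target branch, same appeal to Proposition~\ref{RH} with vanishing multiplicity, same endgame --- but one step is run in the wrong logical order, and a literal attempt at it would stall. You propose to \emph{first} ``check that $B$ is exactly a connected component of $F^{-1}(T'')$'' and only then apply Riemann--Hurwitz. That inclusion cannot be checked upfront by combinatorial bookkeeping: since $F$ can fold at critical internal vertices (two edges at a vertex $u$ map to the same edge whenever $f_u$ identifies their attaching points), a vertex of $B$ could a priori map to $F(v)$ and the image of $B$ could then escape $T''$, so ``$B$ is a full preimage component'' is genuinely part of the conclusion, not an input. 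The paper reverses the logic: writing $B=B_v(e)$, it takes $T''$ to be the branch of $T^Z$ on $F(v)$ attached to $F(e)$ and $T'$ the connected component of $F^{-1}(T'')$ meeting $B$; since $T'$ is connected and cannot contain $v$ (because $F(v)\notin T''$), one gets $T'\subseteq B$ for free, and only \emph{after} Riemann--Hurwitz does one learn $T'=B$.

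The step you call ``the delicate quantitative step'' is in fact immediate once set up this way. By construction $T''$ is a branch, so $\chi_{T^Z}(T'')=1$; the absence of critical leaves in $B\supseteq T'$ gives $\mult T'=0$ because the correction term in Proposition~\ref{RH} is a sum over critical \emph{leaves} only; hence $\chi_{T^Y}(T')=\deg(F:T'\to T'')\cdot 1\geq 1$, while monotonicity gives $\chi_{T^Y}(T')\leq\chi_{T^Y}(B)=1$. So the degree is $1$ and $\chi_{T^Y}(T')=1$, which says $T'$ is itself a branch; a branch contained in $B$ and containing the endpoint of $e$ inside $B$ can only be cut off at $v$, so $T'=B$, $F(B)=T''$, and bijectivity follows from degree $1$. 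Your worry about ``hidden ramification'' at internal vertices dissolves for two reasons: the multiplicity term in the tree Riemann--Hurwitz formula never sees internal vertices, and in any case Lemma~\ref{ptscrit} shows every critical vertex lies on a path joining two critical leaves, and such a path cannot enter the branch $B$ without crossing its base vertex $v$ twice, which is impossible in a tree --- so $B$ contains no critical vertices at all.
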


\begin{proof}
Set $v\in V$ and $e$ such that $B=B_v(e)$. Let $T''$ be the branch of $F(e)$ containing $F(v)$ and $T'$ be the component of $F^{-1}(T'')$ containing $v$. Then, $T'$ is a sub-tree of $B$ and it follows that $\mult T'=0$. From the Riemann-Hurwitz formula,
\[ \chi_{T^Y}(T') = {\rm deg}(F:T'\to T'')\cdot \chi_{T^Z}(T'') - \mult T' = {\rm deg}(F:T'\to T'')\cdot \chi_{T^Z}(T'').\] 
It follows that ${\rm deg}(F:T'\to T'')=1$ and $\chi_{T^Y}(T')=1$. In particular, $T'$ is a branch so $B=T'$. Moreover, $F(B)=T''$ and the degree of $F:B\to F(B)$ is equal to $1$.
\end{proof} 

Using the same ideas we can find a formula for annuli.

\begin{corollary} \label{anneaudeg1} 
If $A$ is an annulus of $T^Y$ that does not contain a critical leaf, then its image $F(A)$ is an annulus and $A$ is a connected component of $F^{-1}(F(A))$. 

Moreover we have $F(\overline A)=\overline{F(A)}$. If in addition $A=]\!]v_1,v_2[\![$ does not contain any critical element, then $F:A\to F(A)$ and $\overline F:\overline A\to \overline{F(A)}$ are bijections.  
\end{corollary}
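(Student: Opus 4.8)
The plan is to follow the proof of Corollary~\ref{branchdeg1}, the new ingredient being to control how $F$ behaves along the core of the annulus. Write $A=]\!]v_1,v_2[\![$, let $P:=[v_1,v_2]$ be the path joining $v_1$ and $v_2$, and set $w_i:=F(v_i)$; for an internal vertex $u$ and an edge $e$ at $u$, write $\deg_u(e)$ for the local degree of $f_u$ at $i_u(e)$. If $P$ has length one, $A$ is a single open edge and the statement is immediate, so assume $P$ has an interior vertex. Every branch hanging off an interior vertex of $P$ lies in $A$, hence contains no critical leaf, so by Corollary~\ref{branchdeg1} it maps bijectively onto a branch and, in particular, each hanging edge has local degree $1$. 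Thus the only thing that can go wrong is a ``folding'' of $P$, and the heart of the proof is to exclude it.

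I would do this one internal sphere at a time. Let $u$ be an interior vertex of $P$ and let $e_-,e_+$ be the two edges of $P$ at $u$. Since all the ramification of $f_u$ sits at the marked points $i_u(e)$, while the hanging edges and the leaves at $u$ are unramified, the Riemann--Hurwitz formula for $f_u$ reduces to $(\deg_u(e_-)-1)+(\deg_u(e_+)-1)=2\deg(f_u)-2$, that is $\deg_u(e_-)+\deg_u(e_+)=2\deg(f_u)$. As each local degree is at most $\deg(f_u)$, this forces $\deg_u(e_-)=\deg_u(e_+)=\deg(f_u)$. Two consequences follow. First, $F(e_-)\ne F(e_+)$: otherwise the two full local degrees would add up in a single fibre of $f_u$, exceeding $\deg(f_u)$; so $F$ does not backtrack at $u$. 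Second, no hanging edge $h$ at $u$ can satisfy $F(h)\in\{F(e_-),F(e_+)\}$, since the corresponding path edge already fills the whole fibre while $\deg_u(h)=1$.

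Running these two facts along $P$ shows that $F|_P$ is a simple path onto $[w_1,w_2]$, in particular $w_1\ne w_2$, and that every hanging branch is sent into the region hanging off the interior of $[w_1,w_2]$; therefore $F(A)=]\!]w_1,w_2[\![$ is an annulus. That $A$ is a connected component of $F^{-1}(F(A))$ is then the trapping argument of Corollary~\ref{branchdeg1}: to leave $A$ one must cross $v_1$ or $v_2$, but $F(v_i)=w_i\notin\,]\!]w_1,w_2[\![$, so the component of $F^{-1}(F(A))$ containing $A$ stops at the boundary and equals $A$. The equality $F(\overline A)=F(A)\cup\{w_1,w_2\}=\overline{F(A)}$ is then immediate.

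For the last assertion, under the stronger hypothesis that $A$ contains no critical element the path edges are unramified, so the common value $\deg(f_u)=\deg_u(e_-)=1$ is the degree of the induced cover $\overline{\F}:\overline A\to\overline{F(A)}$ of annuli; being of degree one it is a bijection, and so is its restriction $F:A\to F(A)$. The step I expect to be delicate is the balance $\deg_u(e_-)+\deg_u(e_+)=2\deg(f_u)$: it rests on the fact that, in this (stable, critically marked) setting, $f_u$ carries no ramification away from the marked points, so that the absence of critical leaves in the hanging branches genuinely pins down the local degrees along the core. If one prefers to stay closer to the template of Corollary~\ref{branchdeg1}, the same conclusion can be reached by choosing $T''$ to be the component of $T^Z\setminus\{w_1,w_2\}$ meeting $F(A)$, observing that the preimage component $T'$ carrying the interior of $A$ is trapped in $A$ (hence $\mult T'=0$) and has at least two ends, and reading off $\chi_{T^Z}(T'')=0$ from Proposition~\ref{RH}; but identifying $T'$ with $A$ still requires the no-folding analysis above.
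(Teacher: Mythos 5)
Your argument is correct, but it reaches the corollary by a genuinely different route than the paper. The paper never looks inside the spheres along the core path: it cuts $T^Z$ along the two image edges $F(e_1),F(e_2)$, takes the component $T''$ of the cut graph and the component $T'$ of $F^{-1}(T'')$ inside $\overline A$, notes $\mult(T')=0$ since $A$ carries no critical leaf, and lets the Riemann--Hurwitz formula for trees (Proposition \ref{RH}) force $\chi_{T^Y}(T')=\chi_{T^Z}(T'')=0$, whence $T'=\overline A$ and $F(\overline A)=T''$; the final bijectivity is then read off globally: when $A$ contains no critical element the boundary edges $e_1,e_2$ have degree one, so the induced cover $\overline\F:\overline{\T}'\to\overline{\T}''$ has no critical leaves and hence degree one. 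You instead run classical Riemann--Hurwitz on each sphere $f_u$ along $[v_1,v_2]$, obtaining $\deg_u(e_-)=\deg_u(e_+)=\deg(f_u)$, from which no-backtracking of $F$ on the core and the off-path behaviour of hanging branches follow. This buys sharper local information than the paper's proof (for instance the singleton fibre over each interior image vertex, which is what makes your degree-one conclusion work), at the cost of more bookkeeping. The step you flag as delicate is in fact guaranteed by the definitions: a cover between trees of spheres requires $f_u:\S_u-Y_u\to\S_{F(u)}-Z_{F(u)}$ to be an unramified covering, so all ramification of $f_u$ sits in $Y_u=i_u(E_u)$; and ``hanging edges have local degree one'' is precisely Lemma \ref{corollaribilisation} applied with zero critical leaves, which is the cleaner citation than extracting it from Corollary \ref{branchdeg1}. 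Your closing observation is also fair: the paper's identification of $T'$ with $\overline A$ from the vanishing of $\chi$ is stated quite tersely.

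One step you must add: your analysis proves $F(A)\subseteq\,]\!]w_1,w_2[\![$ (the core maps onto $[w_1,w_2]$ and hanging branches map off-path), but you assert equality. If some branch hanging off an interior vertex of $[w_1,w_2]$ were missed, $F(A)$ would be a proper open subset and not an annulus, so the reverse inclusion is genuinely part of the statement. It is one line in your framework: $f_u$ is surjective and $f_u^{-1}(Z_{F(u)})\subseteq Y_u$, so every edge $e''$ at $F(u)$ admits a preimage edge $e$ at $u$ with $F(e)=e''$; when $e''$ is a hanging edge of the image annulus, your fibre-saturation argument excludes $e\in\{e_-,e_+\}$, hence $e$ is a hanging or leaf edge at $u$, and Corollary \ref{branchdeg1} sends $B_u(e)\subseteq A$ onto $B_{F(u)}(e'')$. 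The paper gets this for free from the surjectivity of the induced cover $\overline\F:\overline{\T}'\to\overline{\T}''$.
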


\begin{proof} Recall that for two adjacent vertices $v, v'$ of a graph, the edge between is by definition the set $\{v,v'\}$ (see \cite{A2} for notations for graphs).
Let $A=]\!]v_1,v_2[\![$.
If $A=\{\{v_1,v_2\}\}$ then the result follows directly from the definition of combinatorial trees maps. Suppose that it is not the case. Recall that $\overline A$ is the sub-tree of $T^Y$ defined by adding $v_1$ and $v_2$ to the set of vertices of $A$.

Let $e_1$ and $e_2$ be two edges connecting $v_1$ and $v_2$ to the rest of $\overline A$,  $v$ a vertex of $\overline A$. Let $T''$ be the connected component of the graph $(V^Z, E^Z-\{F(e_1),F(e_2)\})$ containing $F(v)$ and $T'$ be the component of $F^{-1}(T'')$ containing $v$. Then $T'$ is a sub-tree of $\overline A$ and $\mult (T')=0$. From the Riemann-Hurwitz formula 
\[0=\chi_{T^Y}(\overline A)\geq \chi_{T^Y}(T') = {\rm deg}(\F:\T'\to \T'')\cdot \chi_{T^Z}(T'').\]
The connected components of the graph $(V^Z, E^Z-\{F(e_1),F(e_2)\})$ have characteristic positive or equal to zero in $T^Z$. Then, 
\[0=\chi_{T^Y}(A')=\chi_{T^Y}(T') =\chi_{T^Z}(T'').\]
This proves that $\overline A=T'$ and $F(\overline A)=T''$. 

Now suppose that $A$ does not contain any critical vertex. Then the edges $e_1$ and $e_2$ have degree one and the map $\overline {\cal F}:\overline {\cal T}'\to \overline{\cal T}''$ has no critical leaves. So it has degree one. This proves that $F:\overline A\to F(\overline A)$ is a bijection. In particular $F:A\to F(A)$ is a bijection.

Given that images of two adjacent vertices are adjacent vertices, the results about the closure follow.
\end{proof}

\begin{corollary}\label{attachbranch}
Let $B$ be a branch on $v$ in $T^Y$ containing at most one critical leaf $c$. Then $F(B)$ is the branch on $F(v)$ attached at $ a_{F(v)}(F(c))$. 
\end{corollary}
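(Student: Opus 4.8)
The plan is to realize $F(B)$ as a branch by applying the Riemann--Hurwitz formula (Proposition \ref{RH}) to a well-chosen preimage component, letting the single critical leaf be absorbed by the degree. First I would fix notation: write $B=B_v(e)$, where $e=\{v,v_1\}$ is the edge of $T^Y$ through which $B$ hangs off $v$, and introduce the candidate image branch $T'':=B_{F(v)}(F(e))$ on the $T^Z$-side, which is a branch so that $\chi_{T^Z}(T'')=1$. Let $T'$ be the connected component of $F^{-1}(T'')$ containing $v_1$. Since $F(v)\notin T''$ we have $v\notin T'$, and because $v$ separates $B$ from the rest of $T^Y$ while $T'$ is connected and contains $v_1$, we obtain $T'\subseteq B$.

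The heart of the argument is to show $T'=B$ together with surjectivity onto $T''$. By the construction recalled before Proposition \ref{RH} there is a cover $\overline{\F}:\overline{T'}\to\overline{T''}$; call $D$ its degree. As $T'\subseteq B$, its only possible critical leaf is $c$, so Proposition \ref{RH} gives $\chi_{T^Y}(T')=D-\mult(T')$ with $\mult(T')\in\{0,\deg(c)-1\}$. Now $T'$ is a proper connected subtree (it misses $v$), hence $\chi_{T^Y}(T')\leq 1$, with equality exactly when $T'$ is a branch. If $B$ has no critical leaf, then $\mult(T')=0$ forces $D=\chi_{T^Y}(T')\leq 1$, so $D=1$ and $\chi_{T^Y}(T')=1$ (the situation of Corollary \ref{branchdeg1}). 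If $B$ carries the critical leaf $c$, one first checks $c\in T'$ (otherwise $T'$ would have no critical leaf, forcing $T'=B\ni c$, a contradiction); then $\chi_{T^Y}(T')=D-(\deg(c)-1)\leq 1$ gives $D\leq\deg(c)$, and combined with $D\geq\deg(c)$ (the global degree dominates the local degree at $c$) this yields $D=\deg(c)$ and $\chi_{T^Y}(T')=1$.

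In every case $\chi_{T^Y}(T')=1$, so $T'$ is a branch; being contained in $B$, containing $v_1$, and missing $v$, it can only be $B_v(e)=B$. Surjectivity of $\overline{\F}$ then gives $F(B)=F(T')=T''=B_{F(v)}(F(e))$, which is a branch on $F(v)$. To conclude, since $c\in B$ we have $F(c)\in F(B)=T''$, and a branch on an internal vertex is determined by any point it contains; therefore $T''=B_{F(v)}(F(c))$, i.e.\ the branch attached at $a_{F(v)}(F(c))$.

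I expect the main obstacle to be precisely the identity $T'=B$: a priori $F$ could fold $B$ back across $F(v)$, making the preimage component a proper sub-branch and $F(B)$ strictly larger than a single branch. Such folding, however, must be paid for by ramification, and with at most one critical leaf the Euler-characteristic budget of Proposition \ref{RH} is exactly tight --- the inequality $D\geq\deg(c)$ together with $\chi_{T^Y}(T')\leq 1$ leaves no slack --- so no folding can occur. The secondary technical points to verify carefully are the bound $D\geq\deg(c)$ and the bookkeeping ensuring that $\mult(T')$ registers only the critical leaf $c$ and not internal ramification.
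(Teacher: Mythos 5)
Your proof is correct, but it takes a genuinely different route from the paper's, which is a two-line reduction to the annulus corollary \ref{anneaudeg1}: since the only critical leaf of $B$ is the endpoint $c$, the annulus $]\!]v,c[\![\,=B-\{c\}$ contains no critical leaf, so $F([\![v,c]\!])=[\![F(v),F(c)]\!]$, and reattaching the leaf gives $F(B)=\,]\!]F(v),F(c)[\![\,\cup\{F(c)\}=B_{F(v)}(F(c))$, with the edge of $B$ at $v$ mapping to the edge attached at $a_{F(v)}(F(c))$. You instead rerun the Riemann--Hurwitz argument of corollary \ref{branchdeg1} directly on the branch with the critical leaf kept in, replacing the paper's ``strip the leaf and quote \ref{anneaudeg1}'' step by the tight budget $\chi_{T^Y}(T')=D-(\deg(c)-1)\leq 1$ played against $D\geq\deg(c)$. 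Since \ref{anneaudeg1} is itself proved by Riemann--Hurwitz applied to preimage components, both arguments rest on Proposition \ref{RH}, but your decomposition buys a little more: it is self-contained modulo Proposition \ref{RH}; it makes explicit why no folding of $B$ across $F(v)$ can occur, a point the paper outsources to the clause ``$A$ is a connected component of $F^{-1}(F(A))$'' in \ref{anneaudeg1}; and it shows along the way that $\overline{\F}:\overline B\to \overline{F(B)}$ has degree exactly $\deg(c)$, recovering lemma \ref{corollaribilisation} in this situation. The paper's route, in exchange, is shorter and reuses an established statement (and quietly sidesteps the fact that Definition \ref{defann} asks for internal endpoints, whereas $c$ is a leaf --- a wrinkle your argument avoids entirely). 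The two points you flag for verification do hold: $D\geq\deg(c)$ because $c$ is a preimage of the leaf $F(c)$ with multiplicity $\deg(c)$ under a cover of global degree $D$, and $\mult(T')$ registers only critical leaves because the sum in Proposition \ref{RH} runs over $\Crit\F\cap T'$, i.e.\ over critical leaves, exactly as used in the proof of corollary \ref{branchdeg1}.
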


\begin{proof}
According to Lemma \ref{anneaudeg1}, $F([\![ v,c]\!])=[\![ F(v),F(c)]\!]$ so in particular $F(B)=F(]\!] F(v),F(c)[\![)\cup \{ F(c)\}$ which is a branch on $F(v)$. Otherwise the edge of $B$ on $v$ maps to the edge attached at $ a_{F(v)}(F(c))$. 
\end{proof}

We remark that Corollary  \ref{branchdeg1} and Corollary \ref{anneaudeg1} can also be proved from the following useful lemma:

\begin{lemma}\label{chemdeg1} 
If $[v_1,v_2]$ is a path in $T^Y$ having only vertices of degree one, then $F$ is a bijection from $[v_1,v_2]$ to $[F(v_1),F(v_2)]$.
\end{lemma}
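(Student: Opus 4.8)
The plan is to show that the image under $F$ of the path $[v_1,v_2]$ is a walk in $T^Z$ that never immediately backtracks, and then to invoke the fact that in a tree a non-backtracking walk is automatically a simple path. First I would write $[v_1,v_2]=(v_1=u_0,u_1,\dots,u_m=v_2)$. Since $F$ is a combinatorial tree map sending vertices to vertices, edges to edges, and adjacent vertices to adjacent vertices, the sequence $(F(u_0),F(u_1),\dots,F(u_m))$ is a walk in $T^Z$ from $F(v_1)$ to $F(v_2)$ in which consecutive terms are adjacent. The whole statement thus reduces to proving that this walk is a simple path equal to $[F(v_1),F(v_2)]$.

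The key step is to rule out immediate backtracking. Fix an interior index $0<j<m$; then $u_j$ has valence at least two along the path, hence is an internal vertex (leaves, being of valence one, can only occur as $u_0$ or $u_m$), so $f_{u_j}$ is defined. Let $e^-=\{u_{j-1},u_j\}$ and $e^+=\{u_j,u_{j+1}\}$ be the two edges of the path meeting $u_j$; they are distinct, so by injectivity of $i_{u_j}\colon E_{u_j}\to\St_{u_j}$ the marked points $i_{u_j}(e^-)$ and $i_{u_j}(e^+)$ are distinct in $\St_{u_j}$. Here I use the hypothesis that $u_j$ has degree one, i.e. that $f_{u_j}\colon\St_{u_j}\to\St_{F(u_j)}$ is an isomorphism and in particular injective. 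Combining this with the commutation $f_{u_j}\circ i_{u_j}=i_{F(u_j)}\circ F$ and the injectivity of $i_{F(u_j)}$, I obtain that $F(e^-)\neq F(e^+)$, that is, $F(u_{j-1})\neq F(u_{j+1})$. So the image walk does not backtrack at any interior vertex.

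Finally I would conclude by pure tree combinatorics: a walk in a tree that never immediately backtracks cannot repeat a vertex, since a repetition would yield a nontrivial closed non-backtracking walk, which is impossible in a graph without cycles. Hence the image walk is a simple path, and being a path joining $F(v_1)$ to $F(v_2)$ it must coincide with $[F(v_1),F(v_2)]$; moreover $F$ is injective on $\{u_0,\dots,u_m\}$. Therefore $F$ restricts to a bijection from $[v_1,v_2]$ onto $[F(v_1),F(v_2)]$. The only substantive point is the no-backtracking step of the second paragraph, where the degree-one assumption is used exactly to prevent $f_{u_j}$ from folding the two consecutive edges of the path onto a single edge; the rest is formal.
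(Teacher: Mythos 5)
Your proof is correct and takes essentially the same approach as the paper's: the substantive step in both is that at an interior vertex $u_j$ of degree one, injectivity of $f_{u_j}$ combined with $f_{u_j}\circ i_{u_j}=i_{F(u_j)}\circ F$ and the injectivity of the markings forces the two consecutive path edges to have distinct images, so $F$ cannot fold the path. The paper packages the conclusion as ``no interior vertex maps to a leaf of the image subtree, hence the image subtree has at most two leaves and is a path,'' while you phrase it as a non-backtracking walk in a tree being a simple path --- a cosmetic difference only.
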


\begin{proof} Let $T':=[v_1,v_2]$. Consider a leaf of $F(T')$. It is the image of some vertex $v\in T^Y$. Let us prove that $v=v_1$ or $v=v_2$.
Suppose that $v$ is not a leaf of $T'$. Then the two edges $e$ and $e'$ of $v$ in $T'$ map to the unique edge of $F(v)\in F(T')$. Then $i_v(e)$ and $i_v(e')$ have same image on $\S_{F(v)}$, which contradicts the fact that $f_{v}$ has degree one.
It follows that the only leaves of $F(T')$ are $F(v_1)$ and $F(v_2)$. As $F(T')$ is connected, we deduce that $F(T')=[v_1,v_2]$.
\end{proof}


\section{Necessary conditions for approximability}\label{conNecess}

In this section we look at the properties of the elements of $\partial \Rat_{{\bf F},X}$.

\subsection{Branches lemma}

In this section, we prove the following result :

\begin{lemma}[Branches]\label{branchcrit} Let $(\F,\T^X)\in\partial\Rat_{{\bf F},X}$,  $v$ a periodic internal vertex, $t_0\in \S_v$ and let $B$ be a branch on $v$ such that for every $k\in \N$, the branch of $T^Y$ attached to $f^k(t_0)$ maps inside the branch on $T^Z$ attached to $f^{k+1}(a_0)$. 
Then 
\begin{itemize}
\item $B$ does not contain a critical periodic vertex ;
\item if $B$ contains a periodic internal vertex then its cycle has degree $1$ and $t_0$ is periodic. 
\end{itemize}
\end{lemma}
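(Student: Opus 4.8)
## Proof Strategy

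The plan is to exploit the approximability hypothesis $(\F,\T^X)\in\partial\Rat_{{\bf F},X}$ together with the nesting condition on the branches along the orbit of $a_0$. The key idea is that the nesting of branches forces a multiplier estimate at $a_0$ under the iterates $f_v^k$, and approximability by rational maps makes these multipliers incompatible with the presence of a critical or repelling periodic structure inside $B$.

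First I would set up notation carefully. Let $p$ be the period of $v$, so $f_v^p:\S_v\to\S_v$ is a well-defined holomorphic map. The hypothesis says that for every $k\in\N$, the branch $B_v(f_v^k(a_0))$ (the branch on $v$ pointing toward $f_v^k(a_0)$, or more precisely the branch attached at that point in $\T^Y$) maps \emph{inside} the corresponding branch attached at $f_v^{k+1}(a_0)$ in $\T^Z$. By Corollary \ref{branchdeg1} and Corollary \ref{attachbranch}, if the branch $B$ contains no critical leaf then $F:B\to F(B)$ is a bijection onto a branch, and the image branch is the one attached at $a_{F(v)}(F(c))$. The nesting condition is therefore really a statement about how the points $f_v^k(a_0)$ move on the spheres in the cycle, and about the local degree of $f_v$ at each of these points. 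I would first show that the nesting condition implies each $f_v^k(a_0)$ is not a critical point of the relevant $f_{F^k(v)}$ having large local degree in the ``wrong'' direction; more precisely, nesting of branches under the map means the orbit $(f_v^k(a_0))_k$ is forced into a shrinking sequence of nested branches, which accumulates at a single attaching point.

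For the first bullet (no critical periodic vertex in $B$), I would argue by contradiction: suppose $B$ contains a critical periodic vertex $w$, so the cycle of $w$ has degree $\geq 2$. Using Corollary \ref{divrev} and Lemma \ref{convit}, I would translate the dynamics near $w$ into the rational approximants $\phi_{n,w}\circ f_n^k\circ\phi_{n,v}^{-1}$, which converge locally uniformly to the constant $i_w(\cdot)$ outside a finite set whenever $F^k(v)$ lies in a branch away from $w$. The critical cycle through $w$ would force, in the approximants, a critical point whose orbit stays in the branch $B$; but the nesting condition says branches map strictly inward along the orbit of $a_0$, which is incompatible with the orbit of a critical point of a degree-$\geq 2$ sphere remaining periodic inside $B$. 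The core mechanism is that a periodic critical sphere attracts mass, whereas the nesting hypothesis pushes everything toward the attaching point $a_0$; these two cannot coexist.

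For the second bullet, suppose $B$ contains a periodic internal vertex $w$ (now without assuming it is critical). The nesting condition combined with Corollary \ref{branchdeg1} forces $F$ to act as a bijection on the nested branches, so the cycle of $w$ has degree $1$. To see that $a_0$ is periodic, I would observe that the nested sequence of branches $B_v(f_v^k(a_0))$ shrinks toward a limit attaching point, and the existence of a periodic vertex $w$ inside $B$ pins down this limit: the orbit of $a_0$ must land periodically on the attaching point $i_v(w)$-direction, forcing $a_0$ itself to be periodic under $f_v^p$. The main obstacle I anticipate is the careful bookkeeping in the first bullet, namely converting the combinatorial ``maps inside'' condition into a quantitative statement about local degrees and multipliers of the approximating rational maps $f_n$, and showing rigorously that a degree-$\geq 2$ periodic sphere cannot be squeezed by the nesting. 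This is where I expect the real work to lie, and it is likely handled by combining the Riemann--Hurwitz formula (Proposition \ref{RH}) on the relevant subtrees with the local uniform convergence statements of Lemma \ref{cvu} and Corollary \ref{divrev}.
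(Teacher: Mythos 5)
Your proposal correctly identifies the inputs (approximability plus the nesting of branches along the orbit of $a_0$) and even gestures at ``a multiplier estimate at $a_0$'', but the actual mechanism that makes the paper's proof work is absent, and what you substitute for it is not an argument. The paper picks a periodic vertex $v'\in B$ of period $k'$, first deduces that $a_0$ is periodic (the iterates of $v'$ lie in the branches $B_k$ attached at the $f^k(a_0)$, and branches on $v$ at distinct attaching points are disjoint, so the orbit of $a_0$ must close up), then chooses $z,z'\in Z$ with $[z,z']$ passing through $v$ and $v'$, normalizes charts so that all coordinate changes fix $0$ and $\infty$, hence $\sigma_{n,v'}=\lambda_n\sigma_{n,v}$ with $\lambda_n\to\infty$ by lemma \ref{noncomp}. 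It then expands $f_n^{k'}$ as a power series on a disk around $a_0$ (the nesting hypothesis plus the maximum modulus principle guarantee there are no poles there), and compares coefficients in the two charts: $c'_{n,j}=\lambda_n^{1-j}c_{n,j}$. Passing to the limit (lemma \ref{convit} when $F^{k'}(v)=v$, corollary \ref{divrev} when not) forces the limiting map on $\S_{v'}$ to be constant unless both $F^{k'}(v)=v$ and the local degree $d$ of $f^{k'}$ at $a_0$ equals $1$; constancy contradicts the fact that $f^{k'}:\S_{v'}\to\S_{v'}$ is a nonconstant rational map. Your replacement for this --- ``a periodic critical sphere attracts mass, whereas the nesting pushes everything toward $a_0$'' --- has no mathematical content here: there is no measure or mass in play, and the combinatorial nesting alone does \emph{not} preclude a critical periodic vertex (the lemma is a genuine approximability constraint, in the spirit of the counterexample of figure \ref{cexresc}), so any correct proof must use the approximating sequence $(f_n)_n$ quantitatively, which your sketch never does.

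Your second bullet is moreover circular: you invoke corollary \ref{branchdeg1} to conclude the cycle of the periodic vertex has degree $1$, but that corollary applies only to branches containing no critical leaf, which is precisely what has to be proved. In the paper this is the \emph{output} of the series argument: $d=1$, and since $d$ is the product of the degrees $d_k$ of the attaching points of the edges of the $B_k^Y$, all $d_k=1$, whence by lemma \ref{corollaribilisation} no $B_k^Y$ contains a critical leaf; the cycle therefore has degree $1$, and a critical periodic vertex in $B$ becomes absurd, which yields the first bullet simultaneously. Finally, your picture of ``a shrinking sequence of nested branches accumulating at a single attaching point'' misreads the hypothesis: the branches $B_k$ are attached at the \emph{distinct} points $f^k(a_0)$ on the spheres along the cycle of $v$, they are pairwise disjoint rather than nested, and nothing shrinks on a fixed sphere; the periodicity of $a_0$ follows from disjointness of branches as recalled above, not from any accumulation argument. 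Note also that you silently assume the return $F^{k'}(v)=v$, whereas the paper must (and does) rule out $F^{k'}(v)\neq v$ via corollary \ref{divrev}, since the period of $v'$ need not a priori be a multiple of the period of $v$.
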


 We first remark that the Riemann-Hurwitz formula provides the following result:

\begin{lemma}\label{corollaribilisation}
If a branch $B$ on a vertex $v$ maps to a branch, and if $d$ is the degree of the attaching point of the edge of $B$ on $v$, then the number of critical leaves in $B$, counting multiplicities, is $d-1$. 
\end{lemma}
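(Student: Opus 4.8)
The plan is to apply the Riemann--Hurwitz formula for trees (Proposition \ref{RH}) to the subtree $\overline B$ obtained by adding $v$ to the branch $B$, using the hypothesis that $B$ maps to a branch. First I would set up the configuration: write $B=B_v(e)$ where $e$ is the edge of $B$ adjacent to $v$, and let $w:=F(v)$. By hypothesis $F(B)$ is a branch on $w$, say $F(B)=B_w(e')$ where $e'=F(e)$, and the attaching point of $e$ on $v$ has degree $d:=\deg_{i_v(e)}(f_v)$. Setting $T'':=F(B)$ and noting that $B$ itself is then (contained in) a connected component of $F^{-1}(\overline{T''})$, I would pass to the closed subtrees $\overline B=B\cup\{v\}$ and $\overline{T''}=F(B)\cup\{w\}$, so that $\overline{\mathcal F}\colon\overline{\mathcal T}_{\overline B}\to\overline{\mathcal T}_{\overline{T''}}$ is a cover between trees of spheres.

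The key computation is then the Euler-characteristic bookkeeping. A branch with its base vertex adjoined is a tree with exactly one leaf-type boundary at $v$, so $\chi_{T^Y}(\overline B)=\chi_{T^Z}(\overline{T''})=1$ (each closed branch has characteristic $1$; this is the analogue of the branch computation in Corollary \ref{branchdeg1}). The degree of the restricted cover $\overline{\mathcal F}\colon\overline B\to F(B)$ equals the local degree at the single attaching edge $e$, which is exactly $d$, since all the ramification carried across the edge $e$ into the branch is detected by the degree $d=\deg_{i_v(e)}(f_v)$ of the attaching point and the edge-degree compatibility condition in the definition of a cover. Plugging into Proposition \ref{RH},
\begin{equation*}
\chi_{T^Y}(\overline B)=\deg(\overline{\mathcal F})\cdot\chi_{T^Z}(F(B))-\sum_{y\in\Crit\mathcal F\cap\,\overline B}\mult(y),
\end{equation*}
that is $1=d\cdot 1-\sum_{y}\mult(y)$, whence $\sum_{y\in\Crit\mathcal F\cap\overline B}\mult(y)=d-1$. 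Since the critical points contributing to $\Crit\mathcal F$ inside a branch are the critical leaves (the vertex $v$ itself contributes no critical multiplicity beyond what the edge $e$ records), this count of multiplicities is precisely the number of critical leaves in $B$ counted with multiplicity, giving the claimed value $d-1$.

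The step I expect to be the main obstacle is the precise identification of the degree of the restricted cover with $d$ and the matching of $\Crit\mathcal F\cap\overline B$ with the critical leaves of $B$. One must argue carefully that $\deg(\overline{\mathcal F})=d$: the degree of the cover over a branch is governed by how the single separating edge $e$ is covered, and here the edge-degree compatibility (the local degree of $f_v$ at $i_v(e)$ equals the local degree at the opposite endpoint) forces the global degree of the branch cover to be exactly the local degree $d$ at the base attaching point. Handling the bookkeeping so that the base vertex $v$ does not spuriously contribute to the multiplicity sum — i.e.\ that every unit of $\mult$ in $\overline B$ comes from an honest critical leaf strictly inside $B$ — is the delicate point; once that is pinned down, the arithmetic $d-1=\sum\mult(y)$ is immediate from Riemann--Hurwitz.
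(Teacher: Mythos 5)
Your proof is correct and follows essentially the same route as the paper, whose proof is exactly this: apply the Riemann--Hurwitz formula (Proposition \ref{RH}) to the branch and to its closure, identifying the degree of the restricted cover $\overline{\F}:\overline B\to \overline{F(B)}$ with $d$ because $v$ is the unique preimage of $F(v)$ in $\overline B$ (as $F(B)$ is a branch on $F(v)$, no vertex of $B$ can map to $F(v)$), so the degree is read off at the single boundary edge. One small precision: Proposition \ref{RH} requires the source subtree to be a connected component of $F^{-1}(T'')$, which holds for the \emph{open} branch $B$ with $T''=F(B)$ (since $F(v)\notin F(B)$, the component containing $B$ lies in $T^Y-\{v\}$ and hence equals $B$), and the formula should then be instantiated with $\chi_{T^Y}(B)=\chi_{T^Z}(F(B))=1$ as in the proof of Corollary \ref{branchdeg1} --- the closed tree $\overline B$ need not be a component of $F^{-1}(\overline{F(B)})$ and enters only through the degree $\deg(\F|_{\overline B})$, but this does not affect your arithmetic, since $v$ is an internal vertex and contributes nothing to the critical sum.
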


We are now ready to prove the branches lemma.

\begin{proof}[Lemma \ref{branchcrit}]
For $\star\in\{X,Y,Z\}$, we denote by $B_k^\star$ the branch attached to $f^k(a_0)$ in $T^\star$. Let $v'$ be a periodic vertex in $B^X_0$. Let $v_k:=F^k(v)$ and $v'_k:=F^k(v')$.
As the iterates of $v'$ are in the $B_k^\star$, the orbit of $a_0$ is periodic under $f$.

Suppose that $t_0$ is periodic with period $k_0$ and that the vertex $v'$ has exact period $k'$ which is a multiple of $k_0$. 
Then $v_0$ and $v'_0$ lie in $Z$, we find $z$ and $z'\in Z$ such that the path $[z,z']$ in $T^Z$ passes through $v_0$ and $v'_0$ in this order. 
 For $\star\in\{v_0,v'_0\}\subset T^Z$, we give projective charts $\sigma_\star$ such that
\[\sigma_\star\circ a_\star(z) = \infty\quad\text{and}\quad  \sigma_\star\circ a_\star(z') = 0.\]

Suppose that the sequence ${ \F}_n:=(f_n,a_n^Y,a_n^Z)$ of marked spheres covers converges to ${ \F}$ and take $\phi_n^Y$ and $\phi_n^Z$ like in Definition \ref{defcvdyn}. After post-composing the isomorphisms $\phi_{n,\star}^Z$ by automorphisms of $\S_\star$ tending to identity when $n\to \infty$, we can suppose that 
\[\phi_{n,\star}^Z\circ a_n(z) = a_\star(z)\quad\text{and}\quad \phi_{n,\star}^Z\circ a_n(z') = a_\star(z').\]
Define the projective charts on $\S_n$ by $\sigma_{n,\star}:=\sigma_\star\circ \phi_{n,\star}^Z$. 

For every $n\in\N$, the change of coordinates $\sigma_{n,v'}\circ \sigma_{n,v}^{-1}$ fixes $0$ and $\infty$. So it is a dilatation centered at $0$, i.e. we can define $\lambda_n\in\C$ such that $\sigma_{n,v'}=\lambda_n \sigma_{n,v}$. 
The vertices $v$ and $v'$ have at least three edges, so Lemma \ref{noncomp} ensures that $\lambda_n\to \infty.$

Let $D\subset \S_v$ be a disk containing $a_v(z')$. 
Given that the $f_v$ are continuous for all the $v\in\T^Y$, we can suppose that $D$ is sufficiently small such that the set of its $k'$ iterates under $f$ contains at most a unique attaching point which is the iterate of $t_0$.

Let $D_n:=(\phi^{Z}_{n,v})^{-1}(D)$. Now we show that in the chart $\sigma_{n,v}$, we can take $n$ large enough such that the map $f^{k'}_n$ has no poles on the disk $D_n$. We have $F(B_{k-1}^Y)\subseteq B^Z_{k}$ for every $k>0$, so for $n$ large enough the disk $D_{n,k}$ doesn't intersect attaching points of edges of elements $z\in Z-B^Z_{k}$.
We have $\phi_{n,v}^Z(\partial D_{n,k})\to\partial D_k$, so from the maximum modulus principle we conclude that $ \phi_{n,v}^Z(D_{n,k})\to D_k$. By definition of $D$, $D_{0}=D$ does not contain poles of $f^{k'}$ so $D_n$ does not contain poles of $f^{k'}_n$. 

In the charts $\sigma_{n,v}$ and $\sigma_{n,v'}$, we can develop the map $f_n^{k'}:D_n\to \S_n-\{a_{n}(z)\}$ as a power series in $a_n(z')$ :
\[\sigma_{n,v}\circ f_n^{k'}=\sum_{j\in \N} c_{n,j}\cdot \sigma_{n,v}^j \quad {\rm and}\quad\sigma_{n,v'}\circ f_n^{k'}=\sum_{j\in \N} c'_{n,j}\cdot{\sigma^j_{n,v'}}.\]

We have 
$\sigma_{n,v'} = \lambda_n \sigma_{n,v}$,
so
\[c'_{n,j} = \lambda_n^{i-j} c_{n,j}.\]

Considering the Laurent series of ${f^{ {k'}}:\S_v\to \S_v}$ in the neighborhood of $t_0=a_{v}(z')$ in the chart $\sigma_{v}$ and that of $f^{ {k'}}:\S_{v'}\to \S_{v'}$ in the neighborhood of $a_{v'}(z')$ in the chart $\sigma_{v'}$,
\[\sigma_{v} \circ f^{k'}= \sum_{j\in\Z} c_j\cdot\sigma_{v}^j\quad\text{ and }\quad\sigma_{v'}\circ f^{k'} = \sum_{j\in\Z} c'_j\cdot\sigma_{v'}^j.\]

If $F^{k}(v)=v$ then Lemma \ref{convit} ensures that $\phi_{n,v} \circ f_n^k \circ \phi_{n,v}^{-1}$ converges locally uniformly to $\sigma_v\circ f^k\circ \sigma_v^{-1}$
on $D-\{a_v(z')\}$ so uniformly on $D$ by the maximum principle, since these maps have no poles in $D$. In the chart $\sigma_v$, we then have the convergence $\ds c_{n,j}\underset{n\to \infty}\longrightarrow  c_j$. 
If $F^k(v)\neq v$, Lemma \ref{divrev} allows us to conclude that $\ds c_{n,j}\underset{n\to \infty}\longrightarrow  0=c_j$.
Likewise, we have the convergence $\ds c'_{n,j}\underset{n\to \infty}\longrightarrow  c'_j$. In particular, as $c'_{n,j} = 0$ for $j<0$, we have $c'_j=0$ and $c_j=0$ (we recovered the fact that $f^{k'}$ fixes $t_0$).

If $F^k(v) \neq v$,  all the coefficients $c_j$ are zero, so coefficients of  $c'_j$ are zero, this is a contradiction. 
We conclude that $F^k(v)=v$.

 If we denote by $d$ the local degree of $f^{k'}$ at $t_0$, then $c_j=0$ for $j<d$ and $c_d\neq 0$. If $n\to \infty$ on $c'_{n,j} = \lambda_n^{1-j} c_{n,j}$, then $\lambda_n\to \infty$, and we have $c'_1=c_1$ and $c'_j= 0$ if $j>1$. thus, if $d>1$, the coefficients $c'_j$ are again zero, another contradiction. 
So the only possible case is $d=1$. 
But $d$ is the product of the $d_k$, where $d_k$ is the degree of the attaching point of the edge of $B_k^Y$. Thus, if $d=1$, then all the $d_k$ are 1, then all the branches $B_k^Y$ don't contain critical vertices according to Lemma \ref{corollaribilisation} applied with $d=1$. Thus $f^{k'}$ has degree 1, then the map $f^{k'}:\S_{v'}\to \S_{v'}$ has degree 1 and this is absurd. 
\end{proof}


\subsection{Lemmas about annuli}

In this part we continue the study of covers between trees of spheres which are approximable by a sequence of dynamical systems of marked spheres. 
Recall that according to Corollary \ref{anneaudeg1}, an annulus that does not contain critical leaves has a well defined degree.
We prove the following lemma.

\begin{lemma}[Annuli]\label{annocritiq}\label{multiplic} 
Suppose that 
 $({ \F},{ \T}^X)\in\partial\Rat_{{\bf F},X}$ 
and that $v$ and $v'$ are distinct internal vertices of $\T^Y$ such that for $0\leq k\leq k_0-1$
 the annulus $]\!]F^k(v),F^k(v')[\![^Y$ is defined and does not contain any critical leaf.

\begin{itemize}
\item(Critical) If $\F$ has degree more than $1$ on one of these $]\!]F^k(v),F^k(v')[\![^Y$, we never have $[v_{k_0},v'_{k_0}]\subseteq [v,v']$.

\item(Non critical) If this is not the case and if $[v_{k_0},v'_{k_0}]\subseteq [v,v'] \text{ or }[v,v']\subseteq [v_{k_0},v'_{k_0}]$ we have 
\begin{enumerate}[label=\Alph*)]
	 	\item either $v_{k_0}=v$ and $v'_{k_0} = v'$, then $i_v(v')$ and $i_{v'}(v)$ are fixed by $f^{k_0}$ and the product of the associated multipliers is 1;
  		\item or $v_{k_0}=v'$ and $v'_{k_0} = v$, then $f^{k_0}$ exchanges $i_v(v')$ and $i_{v'}(v)$ and the multiplier of the associated cycle is 1.
 \end{enumerate}
\end{itemize}
\end{lemma}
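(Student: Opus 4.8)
\noindent\emph{Proof plan.} The plan is to transport the argument of Lemma~\ref{branchcrit} to the annulus, the one genuinely new feature being that the two attaching points $i_v(v')$ and $i_{v'}(v)$ sit at \emph{opposite} ends of the degenerating neck; this is what will turn the ``equality of multipliers'' of Lemma~\ref{branchcrit} into a reciprocity relation. I first record the combinatorial skeleton. Writing $v_k:=F^k(v)$, $v'_k:=F^k(v')$ and $A_k:=]\!]v_k,v'_k[\![$, the hypothesis that no $A_k$ ($0\le k\le k_0-1$) contains a critical leaf lets me apply Corollary~\ref{anneaudeg1}: it gives $F(\overline{A_k})=\overline{A_{k+1}}$ and exhibits $\overline{A_k}$ as a connected component of $F^{-1}(\overline{A_{k+1}})$, so $\overline F\colon\overline{A_0}\to\overline{A_{k_0}}$ is a cover between trees of spheres of degree $D:=\prod_k D_k$, where $D_k$ is the degree of $\overline F\colon\overline{A_k}\to\overline{A_{k+1}}$. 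The two bullets of the statement are exactly the cases $D=1$ and $D\ge2$.

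In the non-critical case $D=1$, the last assertion of Corollary~\ref{anneaudeg1} makes $\overline F\colon[v,v']\to[v_{k_0},v'_{k_0}]$ a bijection of paths, so these two paths have the same number of vertices. Combined with $[v_{k_0},v'_{k_0}]\subseteq[v,v']$ or $[v,v']\subseteq[v_{k_0},v'_{k_0}]$, equality of lengths forces $[v_{k_0},v'_{k_0}]=[v,v']$ and hence $\{v_{k_0},v'_{k_0}\}=\{v,v'\}$, i.e. case A or case B. In either case the identity $f_u\circ i_u=i_{F(u)}\circ F$, iterated along the orbit, shows that $f^{k_0}$ fixes (case A) respectively exchanges (case B) the attaching points $i_v(v')$ and $i_{v'}(v)$, because the first edge of $[v,v']$ maps to the first edge of $[v_{k_0},v'_{k_0}]$. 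Only the statement about multipliers then remains.

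For the multiplier I would set up charts exactly as in Lemma~\ref{branchcrit}: pick $z,z'\in X$ with $[z,z']$ running through $v$ then $v'$, fix projective charts $\sigma_v,\sigma_{v'}$ with $\sigma_v(i_v(v'))=0=\sigma_{v'}(a_{v'}(z'))$ and $\sigma_{v'}(i_{v'}(v))=\infty=\sigma_v(a_v(z))$, and normalize $\phi_{n,v},\phi_{n,v'}$ so that $\sigma_{n,v'}=\lambda_n\,\sigma_{n,v}$; by Lemma~\ref{noncomp}, $\lambda_n\to\infty$. As in Lemma~\ref{branchcrit}, non-criticality of the $A_k$ together with the maximum principle keeps $f_n^{k_0}$ pole-free near the attaching points, so the expansions $\sigma_{n,v}\circ f_n^{k_0}=\sum_j c_{n,j}\,\sigma_{n,v}^{\,j}$ converge coefficientwise to those of the limit maps by Lemmas~\ref{convit} and~\ref{divrev}. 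In case A I expand $f_v^{k_0}$ at $i_v(v')=0$, so $c_0=0$ and $c_1=\rho$ is the first multiplier, and then feed the \emph{exact} conjugacy $\sigma_{n,v'}\circ f_n^{k_0}\circ\sigma_{n,v'}^{-1}(W)=\lambda_n\,(\sigma_{n,v}\circ f_n^{k_0}\circ\sigma_{n,v}^{-1})(W/\lambda_n)$ through the coordinate $U=1/W$ centred at the \emph{other} fixed point $i_{v'}(v)$. Passing to the limit, with $\lambda_n c_{n,0}\to A$ finite and $\lambda_n^{1-j}c_{n,j}\to0$ for $j\ge2$, yields that $f_{v'}^{k_0}$ reads $U\mapsto U/(AU+\rho)$ near $i_{v'}(v)$, whose multiplier is $1/\rho$; hence the product of the two multipliers is $1$. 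Case B is formally identical after replacing the two self-maps by the pair $f_v^{k_0}\colon\S_v\to\S_{v'}$ and $f_{v'}^{k_0}\colon\S_{v'}\to\S_v$ and reading off the derivative of the second-return map, the same computation giving a cycle multiplier equal to $1$.

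In the critical case some $D_k\ge2$, so $D\ge2$; assume for contradiction $[v_{k_0},v'_{k_0}]\subseteq[v,v']$. The neck of $A_0$ has size $\asymp\log|\lambda_n^{(0)}|$ and that of $A_{k_0}$ has size $\asymp\log|\lambda_n^{(k_0)}|$, where $\lambda_n^{(k)}$ is the scaling between the charts at $v_k$ and $v'_k$. The degree-$D$ cover $\overline F\colon\overline{A_0}\to\overline{A_{k_0}}$ forces $\log|\lambda_n^{(k_0)}|\sim D\,\log|\lambda_n^{(0)}|$, whereas the inclusion of paths decomposes the neck $[v,v']$ as that of $[v_{k_0},v'_{k_0}]$ plus the two complementary segments, giving $\log|\lambda_n^{(k_0)}|\lesssim\log|\lambda_n^{(0)}|$; since $D\ge2$ this is impossible. (Equivalently one repeats the coordinate computation above with local degree $d\ge2$ at the attaching point and reaches the incompatibility of scaled coefficients as in the endgame of Lemma~\ref{branchcrit}.) The step I expect to be the main obstacle is the third paragraph: certifying that the attaching points are ``good'' points, i.e. that no pole of $f_n^{k_0}$ accumulates there so that the coefficientwise limits are legitimate; this is precisely where non-criticality of every $A_k$ enters, via Corollary~\ref{anneaudeg1} and the maximum-principle argument already used in Lemma~\ref{branchcrit}, and getting the two-chart bookkeeping right so that the opposite-end placement produces $1/\rho$ rather than $\rho$.
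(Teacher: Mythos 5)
Your write-up reproduces the analytic core of the paper's proof: the degenerating annulus $A_n$ cut out by pullbacks of two disks around $i_v(v')$ and $i_{v'}(v)$, the maximum-principle propagation through the $k_0$ non-critical intermediate annuli so that $f_n^{k_0}(A_n)$ avoids both $a_n(z)$ and $a_n(z')$ (the paper's ``Point 2'', which you correctly single out as the technical burden), the two Laurent expansions linked by the scaling $\sigma_{n,v'}=\lambda_n\sigma_{n,v}$, and coefficientwise convergence via Lemmas \ref{convit} and \ref{divrev}. Your Case A bookkeeping ($c'_{n,j}=\lambda_n^{1-j}c_{n,j}$, hence $c'_1=c_1$, $c'_j=0$ for $j\geq 2$, multipliers $\rho$ and $1/\rho$) and your scale-free index in Case B are exactly the paper's ``Conclusions and multipliers''. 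Where you genuinely diverge is the rigidity step in the non-critical case: you get $\{v_{k_0},v'_{k_0}\}=\{v,v'\}$ purely combinatorially --- degree-one annulus covers are tree isomorphisms by Corollary \ref{anneaudeg1}, so $[v,v']$ maps bijectively onto $[v_{k_0},v'_{k_0}]$, and inclusion plus equal cardinality forces equality --- whereas the paper derives it analytically, introducing two further scalings $\rho_n,\rho'_n$ between the charts at $v,v'$ and at $v_{k_0},v'_{k_0}$ and exploiting the dichotomies of Lemma \ref{noncomp} together with $\mu_n/\lambda_n\to c'_1/c_1\in\C-\{0\}$. Your route is simpler, uses no approximating sequence for that step, and lets the multiplier computation run with a single scaling and the exact conjugacy identity; this part is correct and arguably cleaner than the paper's.

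The genuine gap is in the critical bullet. First, your primary argument asserts without proof that the degree-$D$ annulus cover forces $\log|\lambda_n^{(k_0)}|\sim D\log|\lambda_n^{(0)}|$: nothing you have established makes $f_n^{k_0}$ a covering of the necks (you control only boundary behavior), so this modulus multiplication must itself be extracted from the Laurent coefficients --- it is not an independent shortcut. Second, and more seriously, your parenthetical fallback (``repeat the coordinate computation with $d\geq 2$'') does not apply as stated: the exact-conjugacy trick requires source and target charts to coincide, i.e. $\{v_{k_0},v'_{k_0}\}=\{v,v'\}$, and your combinatorial rigidity is unavailable here, since with $D_k\geq 2$ the cover need not be injective on $\overline{A}_k$ (a non-injective $f_u$ at a critical internal vertex can identify two edges, so the geodesic may fold and shorten), and indeed $[v_{k_0},v'_{k_0}]$ strictly inside $[v,v']$ is precisely the configuration you must exclude. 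The paper handles this with the missing ingredient: charts at all four vertices, $\sigma_{n,v}=\rho_n\sigma_{n,w}$, $\sigma_{n,v'}=\rho'_n\sigma_{n,w'}$ with $\rho'_n=(\mu_n/\lambda_n)\rho_n$, and the nonvanishing limit coefficients of index $\pm d_0$ at the two ends (local degree $d_0\geq 2$ there), giving $\rho'_n/\rho_n\approx\lambda_n^{d_0-1}\cdot{\rm const}$; since $\lambda_n^{d_0-1}$ degenerates, Lemma \ref{noncomp} then forbids $v_{k_0}$ and $v'_{k_0}$ from both lying on $[v,v']$. This is a local repair --- your coefficient machinery already contains all the pieces --- but as written the critical case is not proved.
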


Note that Figure \ref{cexresc} shows an example of a dynamical system between trees of spheres that does not satisfy the conclusion of the critical annulus lemma (\ref{annocritiq}): the critical annulus between the full red and full black vertices has period 8.

Before proving this result we must note an interesting and open question:

\begin{question} The Annuli Lemma together with the Branches Lemma give some necessary conditions in order to be approximable by a sequence of dynamical systems of spheres. But are they sufficient conditions?
\end{question}

 \begin{figure}
  \centerline{\includegraphics[width=12cm]{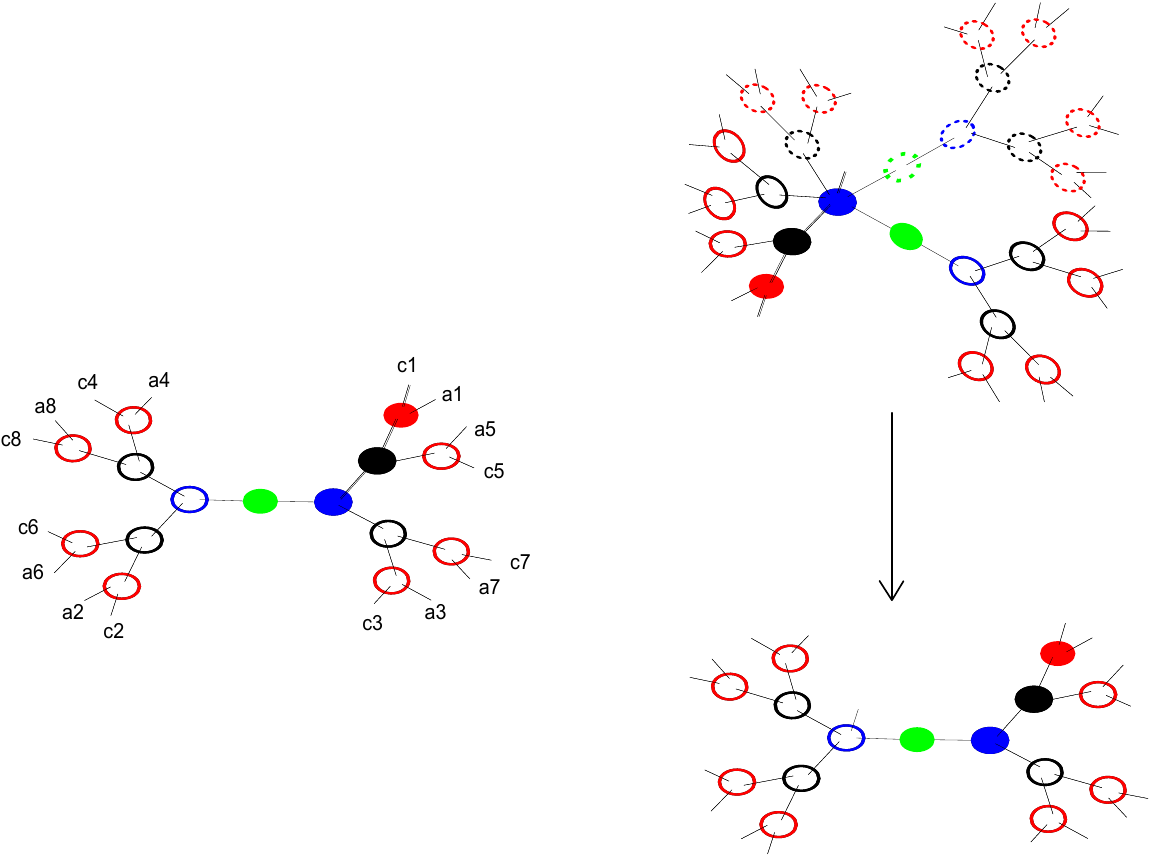}}
   \caption{An example of dynamical system of degree 2 that does not satisfy the conclusion of Theorem \ref{class0}. To the left $\T^X$, to the right top $\T^Y$ and below $\T^Z$.
   The leaves $c_i$ the period 8 cycle of the critical point. The vertices $a_i$ are a cycle of same period. On $\T^X$, the full red (resp. full black, full blue) vertex is critical and of period 8 (resp. 4, 2) and its orbit is the set of red (resp. black, blue) vertices. The full green vertex is fixed.
   }
\label{cexresc} \end{figure}

The rest of this section is dedicated to the proof of this lemma.

We suppose that $({ \F}:{ \T}^Y\to { \T}^Z,{ \T}^X)$  is a tree of spheres  dynamical system dynamically approximable by the sequence of dynamical systems between marked spheres $(f_n,a^Y_n,a^Z_n)$.

\medskip
\noindent{\textbf{Choice of coordinates.}}   
Let $w:=v_{k_0}$ and $w':=v'_{k_0}$. Suppose that ${[w,w']^Z\subseteq[v,v']^Z}$ or that $[v,v']^Z\subseteq [w,w']^Z$. In the non critical case, we have to prove that ${[w,w']^Z=[v,v']^Z}$ and in the critical case that ${[w,w']^Z\not\subseteq [v,v']^Z}$. 

Let $z,z'\in Z$ be such that the path $[z,z']^Z$ goes through $v$ and $v'$ in this order and through $w$ and $w'$, not necessarily in this order. For every internal vertex $\star$ on the path $[z,z']^Z$, we give projective charts $\sigma_\star$ such that
\[\sigma_\star\circ a_\star(z) = 0\quad\text{and}\quad  \sigma_\star\circ a_\star(z') = \infty.\]

Take ${ \F}_n:=(f_n,a_n^Y,a_n^Z)$, $\phi_n^Y$ and $\phi_n^Z$ as in Definition  \ref{defcvdyn}. Let $\S_n$ be such that $a^Y_n:Y\to\S_n$ and $a^Z_n:Z\to\S_n$. Then, after post-composing the isomorphisms $\phi_{n,\star}^Z$ by automorphisms of $\S_\star$ tending to the identity when $n\to \infty$, we can suppose that for every internal vertex $\star$ on the path $[z,z']$ we have
\[\phi_{n,\star}^Z\circ a_n(z) = a_\star(z)\quad\text{and}\quad \phi_{n,\star}^Z\circ a_n(z') = a_\star(z').\]
We then define projective charts on $\S_n$ by $\sigma_{n,\star}:=\sigma_\star\circ \phi_{n,\star}^Z$. 

The changes of coordinate maps fixe $0$ and $\infty$ so they are dilatations centered on $0$, hence we define $\lambda_n$, $\mu_n$, $\rho_n$ and $\rho'_n$ by (cf Figure \ref{bigdrawing}): 
\begin{align*}\sigma_{n,v'}=\lambda_n \sigma_{n,v}&\text{ and }
\sigma_{n,w'}=\mu_n \sigma_{n,w}\\
\sigma_{n,v}=\rho_n \sigma_{n,w}&\text{ and }
\sigma_{n,v'}=\rho'_n \sigma_{n,w'}.
\end{align*}

Note that as the vertices $v,w,v'$ and $w'$ have at least three edges, Lemma \ref{noncomp} implies that the behavior of $\lambda_n, \mu_n,\rho_n$ and $\rho'_n$ is imposed by the relative positions of the vertices (cf below in section Conclusion and multipliers).

\begin{figure}[htbp]
\begin{minipage}[c]{.45\linewidth}
\begin{center}

\includegraphics[width=6.5cm]{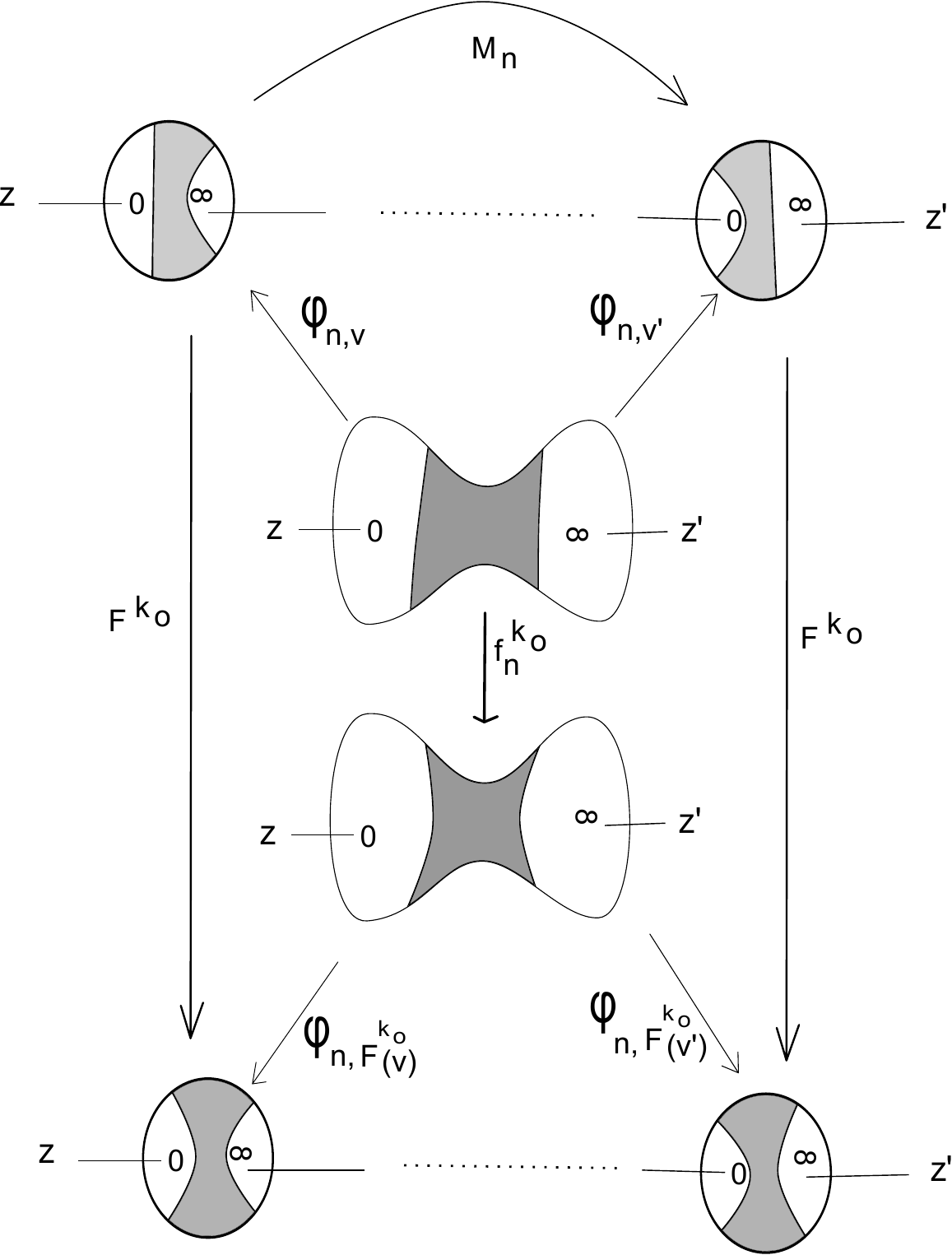}

\end{center}
\end{minipage}
\hfill
\begin{minipage}[c]{.45\linewidth}
\begin{center}

\includegraphics[width=3.5cm]{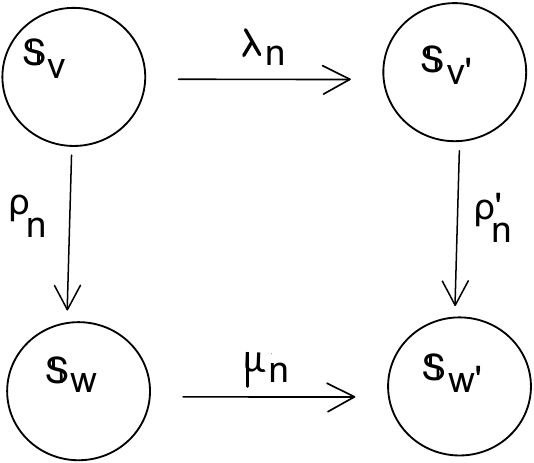}
\label{drawing7} 

\end{center}
\end{minipage}
\caption{A simplified representation of the notation in the proof of Lemma \ref{multiplic}.}\label{bigdrawing}
\end{figure}

\noindent{\textbf{Behavior of annuli.}}  (cf Figure \ref{bigdrawing}) 
Let $Y(f^{k_0})$ be the set of points on the spheres of $\T^Y$ for which the image by an iterate of $f_v^k$ (for some $v\in \T^Y$) with $k\in [1,k_0]$ is the attaching point of an edge in ${ \T}^Z$. This is a finite set containing $i_v(v')$ and $i_{v'}(v)$. 

Let $D\subset \S_v$ (respectively $D'\subset \S_{v'}$) be a disk containing $i_v(v')$ (respectively $i_v'(v)$) small enough such that its closure does not contain any other point of $Y(f^{k_0})$ than $i_v(v')$ (respectively $i_{v'}(v)$) and that it does not contain $a_v(z)$ (resp. $a_{v'}(z')$). Define 
\[A_n:= (\phi_{n,v}^Y)^{-1}(D)\cap (\phi_{n,v'}^Y)^{-1}(D').\]
We are going to prove the following assertions : 
\begin{enumerate}
\item\label{blle1} for $n$ large enough, $A_n$ is an annulus contained in $\S_n-\{a_n(z),a_n(z')\}$;
\item\label{blle2} $f_n^{k_0}(A_n)$ is contained in $\S_n-\{a_n(z),a_n(z')\}$;
\item\label{blle3}  Every compact sub-set of $D-\{i_v(v')\}$ is included in $\phi_{n,v}(A_n)$ for $n$ large enough and $\phi_{n,w}\circ f_n^{k_0}\circ\phi_{n,v}^{-1}$ converges locally uniformly to $f^{k_0}$ in $D-\{i_v(v')\}$; 
\item\label{blle4} Every compact sub-set of $D'-\{i_{v'}(v)\}$ is contained in $\phi_{n,v'}(A_n)$ for $n$ large enough and $\phi_{n,w'}\circ f_n^{k_0}\circ\phi_{n,v'}^{-1}$ converges locally uniformly to $f^{k_0}$ in $D'-\{i_{v'}(v)\}$. 
\end{enumerate}

\medskip
\noindent{ Point 1.}
Let $M_n:=\phi_{n,v'}\circ \phi^{-1}_{n,v}$. According to Lemma \ref{divrev}, for $n$ large enough, $M_n(\partial D)$ is contained in a neighborhood of $i_{v'}(v)$ that doesn't intersect $\partial D'$. Then, $A_n$ is an annulus. When $n\to \infty$, $\phi_{n,v}\circ a_n(z)$ converges to $a_v(z)$ which is not in the closure of $D$ by assumption. Consequently, for $n$ large enough, $D_n$ does not contain $a_n(z)$. Likewise, for $n$ large enough, $D'_n$ does not contain $a_n(z'_n)$. Then, $A_n\subset \S_n-\{a_n(z),a'_n(z)\}$. 

\medskip
\noindent{ Point 2.}
Let $D_1\subset \S_{v_1}$ be a disk containing $f(D)$ but no other attaching point of any edge of ${\cal T}^Z$ than $f(i_v(v'))$. Let $D'_1\subset \S_{v'_1}$ be a disk containing $f(D')$ but no other attaching point of any edge of ${\cal T}^Z$ than $f(i_{v'}(v))$. Just like in the point 1, for $n$ large enough, 
\[A_{1,n}:=\phi_{n,v_1}(D_1)\cap \phi_{n,v'_1}^{-1}(D'_1)\]
is an annulus.

Given that $\phi_{n,v_1}\circ f_n\circ \phi_{n,v}^{-1}$ converges uniformly to $f$ in the neighborhood of $\partial D$ and that $\phi_{n,v'_1}\circ f_n\circ \phi_{n,v'}^{-1}$ converges uniformly to $f$ in the neighborhood of $\partial D'$, for $n$ large enough we have $f_n(\partial A_n)\subset A_{1,n}$.

Like for the point 1, for $n$ large enough, $A_n$ doesn't intersect $a_n\bigl(Y-]\!]v,v'[\![\bigr)$ so, $f_n(A_n)$ doesn't intersect $a_n\bigl(Z-]\!]v_1,v'_1[\![\bigr)$. Likewise, for $n$ large enough, $A_{1,n}$ doesn't intersect $a_n\bigl(Z-]\!]v_1,v'_1[\![\bigr)$. 

In conclusion, $f_n(\partial A_n)\subset A_{1,n}$ and $f_n(A_n)$ doesn't intersect at least one point in each component of the complement of $A_{1,n}$. It follows from the maximum modulus principle that $f_n(A_n)\subset A_{1,n}$. 

For $n$ large enough, $D-a_v(v')$ (resp. $D'-a_{v'}(v)$) doesn't intersect the set $a_v\bigl(Y(f^{k_0})-]\!]v,v'[\![\bigr)$ (resp. the set $a_{v'}\bigl(Y(f^{k_0})-]\!]v,v'[\![\bigr)$), thus $D_1-a_{F(v)}(F(v'))$ (resp. $D'_1-a_{F(v')}(F(v))$) doesn't intersect $a_{F(v)}\bigl(Y(f^{k_0-1})-]\!]F(v),F(v')[\![\bigr)$ (resp $a_{F(v')}\bigl(Y(f^{k_0-1})-]\!]F(v),F(v')[\![\bigr)$). Thus we can do the same if we replace $D$ and $D'$ by $D_1$ and $D'_1$, the vertices $v$ and $v'$ by $v_1$ and $v'_1$ and iterate this $k_0-1$ times. This proves that $f_n^{k_0}(A_n)$ doesn't intersect $a_n(Z-]\!]v_{k_0},v'_{k_0}[\![)$, in particular $a_n(z)$ and $a_n(z')$. 

\medskip
\noindent{ Points 3 and 4.}
These assertions follow from Lemma \ref{convit}.

\medskip
\noindent{\textbf{Laurent series and convergence.}}

In the charts from $\sigma_{n,v}$ to $\sigma_{n,w}$, the map ${f_n^{k_0}:A_n\to \S_n-\{a_n(z),a_n(z')\}}$ has a Laurent series development:
\[\sigma_{n,w}\circ f_n^{k_0}=\sum_{j\in \Z} c_{n,j}\cdot \sigma_{n,v}^j.\]

In the charts from $\sigma_{n,v'}$ to $\sigma_{n,w'}$, this Laurent series becomes 
\[\sigma_{n,w'}\circ f_n^{k_0}=\sum_{j\in \Z} c'_{n,j}\cdot{\sigma^j_{n,v'}}.\]

As we have 
\[\sigma_{n,v'} = \lambda_n \sigma_{n,v}\quad\text{and}\quad \sigma_{n,w'}=\mu_n\sigma_{n,w},\]
it follows that
\[c'_{n,j} = \frac{\mu_n}{\lambda_n^j} c_{n,j}.\]

Now consider the Laurent series of $f^{ {k_0}}:\S_v\to \S_w$ in the neighborhood of $a_v{z}$ in the charts from $\sigma_{v}$ to $\sigma_{w}$ and that of $f^{ {k_0}}:\S_{v'}\to \S_{w'}$ in the neighborhood of $a_{v'}(z')$ in the charts from $\sigma_{v'}$ to $\sigma_{w'}$
\[\sigma_{w} \circ f^{k_0}= \sum_{j\in\Z} c_j\cdot\sigma_{v}^j\quad, \quad\sigma_{w'}\circ f^{k_0} = \sum_{j\in\Z} c'_j\cdot\sigma_{v'}^j.\]

\medskip 
\noindent{\bf Convergence.}
Here we prove that $$c_{n,j}\underset{n\to \infty}\longrightarrow c_j\text{ and }c'_{n,j}\underset{n\to \infty}\longrightarrow c'_j.$$

As $\F_n\to\F$ and for every $0\leq k<k_0,\;\phi_{n,v_k}^{-1}(\partial D_{k,n})$ doesn't intersect $Z_v$, we have 
$$\sigma_{v_{k+1}}\circ(\phi_{n,v_{k+1}}\circ f_n\circ\phi^{-1}_{n,v_k})\circ\sigma^{-1}_{v_k}\to \sigma_{v_{k+1}}\circ f_{v_k}\circ\sigma^{-1}_{v_k}$$
uniformly on $\sigma^{-1}_{v_k}(\phi_{n,v_k}^{-1}(\partial D_{k,n}))$. So by composition, we have $$\sigma_{w}\circ(\phi_{n,w}\circ f^{k_0}_n\circ\phi^{-1}_{n,v})\circ\sigma^{-1}_{v}\to \sigma_{w}\circ f^{k_0}\circ\sigma^{-1}_{v}$$ uniformly on $\sigma^{-1}_{v}(\partial D)$.

Otherwise, we have $$(\sigma_w\circ\phi_{n,w})\circ f^{k_0}_n\circ(\phi^{-1}_{n,v}\circ\sigma^{-1}_v)=\sigma_{n,w}\circ f^{k_0}_n\circ\sigma^{-1}_{n,v}.$$
The uniform convergence implies the uniform convergence of the coefficients of Laurent series so we have $c_{n,j}\underset{n\to \infty}\longrightarrow c_j$. The proof is the same for $c'_{n,j}\underset{n\to \infty}\longrightarrow c'_j.$

\medskip
\medskip 
\noindent{\bf Conclusions and multipliers.} 

\medskip 
\noindent{\bf Case A.} 
Suppose that the path connecting $z$ to $z'$ in $T^Z$ goes through $w$ and $w'$ in this order. 
On one hand, we have $\sigma_{n,w}=\rho_n\sigma_{n,v}$ with $\rho_n\in \C-\{0\}$ and according to Lemma \ref{noncomp}:
\begin{itemize}
\item $\rho_n\to 0$ if and only if the path connecting $z$ to $z'$ goes through $v$ before going through $w$ and 
\item $\rho_n\to \infty$ if and only if the path connecting $z$ to $z'$ goes through $w$ before going through $v$. 
\end{itemize}
Likewise, 
we have $\sigma_{n,w'}=\rho'_n\sigma_{n,v'}$ with $\rho'_n\in \C-\{0\}$ and
\begin{itemize}
\item $\rho'_n\to 0$ if and only if the path from $x$ to $x'$ goes through $v'$ before going through $w'$ and 
\item $\rho'_n\to \infty$ if and only if the path connecting $x$ to $x'$ goes through $w'$ before going through $v'$. 
\end{itemize}
Note that
\[\rho_n \mu_n\sigma_{n,v} = \mu_n\sigma_{n,w} = \sigma_{n,w'} = \rho'_n\sigma_{n,v'} = \rho'_n \lambda_n\sigma_{n,v}\]
so
\[\rho'_n = \frac{\mu_n}{\lambda_n} \rho_n.\]

On the other hand, the development of $f^{ {k_0}}:\S_v\to \S_w$ in the neighborhood of $z$ in the charts from $\sigma_{v}$ to $\sigma_{w}$ is the one of a function defined in the neighborhood of infinity and mapping infinity to infinity with local degree ${d_0}$. Consequently, $c_j=0$ if $j\geq {d_0}+1$ and $c_ {d_0}\neq 0$. Likewise, the development of $f^{ {k_0}}:\S_{v'}\to \S_{w'}$ in the neighborhood of $z'$ in the charts from $\sigma_{v'}$ to $\sigma_{w'}$ is the one of a function defined in the neighborhood of $0$ mapping $0$ to $0$ with local degree $ {d_0}$. Consequently, $c'_j=0$ if $j\leq  {d_0}-1$ and $c'_ {d_0}\neq 0$.

Under the hypothesis of Lemma \ref{multiplic}, as $d_0=1$ 
\[c'_{n, {1}} = \frac{\mu_n}{\lambda_n^{ {1}}}c_{n, {1}}, \text{ so }\frac{\mu_n}{\lambda_n}\underset{n\to \infty}\longrightarrow\frac{c'_{ 1}}{c_{ 1}}\in \C-\{0\}.\]
It follows that $\rho_n\to 0$ if and only if $\rho'_n\to 0$. Likewise, $\rho_n\to \infty$ if and only if $\rho'_n\to \infty$. 
Given that $[w,w']^Z\subseteq [v,v']^Z$ or $ [v,v']^Z\subseteq [w,w']^Z$, the only possibility is that $\rho_n$ and $\rho'_n$ tend neither to $0$ or to infinity, which implies that $v=w$ and $v'=w'$. 
Then we can chose $\sigma_{v}=\sigma_{w}$ and $\sigma_{v'}=\sigma_{w'}$, which implies that $\lambda_n=\mu_n$ so $c_1=c'_1$. The multiplier of $f^{ {k_0}}$ at $z$ is $1/c_1$ and the multiplier of $f^{ {k_0}}$ at $z'$ is $c'_1$. The product of the multiplier is $1$ as required. 

Under hypothesis of Lemma \ref{annocritiq}, we have $d_0>1$
so \[c'_{n, {d_0}} = \frac{\mu_n}{\lambda_n^{ {d_0}}}c_{n, {d_0}}  \text{ so } \frac{\rho'_n}{\rho_n}\approx \lambda_n^{d_0-1} \left( \frac{c'_{d_0}}{c_{d_0}}\right).\]
Thus if we suppose that $w\in[v,v']$, ie $\rho_n\to\C^\star\cup\{\infty\}$, we can deduce that $\rho'_n\to\infty$, ie $w'\notin[v,v']$. The case $w'\in[v,v']$ is similar.

\medskip 
\noindent{\bf Case B.}
Suppose that the path connecting $z$ to $z'$ in $T^Z$ goes through $w'$ and $w$ in this order. 
On one hand, we have $\sigma_{n,w}=\rho_n\sigma_{n,v'}$ with $\rho_n\in \C-\{0\}$. 
Likewise, 
we have $\sigma_{n,w'}=\rho'_n\sigma_{n,v}$ with $\rho'_n\in \C-\{0\}$.
Note that
\[\rho'_n\sigma_{n,v} = \sigma_{n,w'} =\mu_n \sigma_{n,w} =\mu_n \rho_n\sigma_{n,v'} = \mu_n \rho_n \lambda_n\sigma_{n,v}\]
so
\[\rho'_n = \mu_n \rho_n\lambda_n .\]

On the other hand, the development of $f^{ {k_0}}:\S_v\to \S_w$ in the neighborhood of $z$ in the charts from $\sigma_{v}$ to $\sigma_{w}$ is the one of a function defined in the neighborhood of infinity and that maps infinity to $0$ with local degree ${d_0}$. Consequently, $c_j=0$ if $j\geq {d_0}-1$ and $c_{-d_0}\neq 0$. Likewise, the development of $f^{ {k_0}}:\S_{v'}\to \S_{w'}$ in the neighborhood of $z'$ in the charts from $\sigma_{v'}$ to $\sigma_{w'}$ is the one of a function defined in the neighborhood of $0$ and that maps $0$ on infinity with local degree $d_0$. Consequently, $c'_j=0$ if $j\leq -(d_0+1)$ and $c'_{-d_0}\neq 0$. 

Under hypothesis of Lemma \ref{multiplic}, as $d_0=1$
we have
\[c'_{-1}\underset{n\to \infty}\longleftarrow c'_{n,-1} = \mu_n\lambda^{-1}_n c_{n,-1}\underset{n\to \infty}\longrightarrow c_{-1},\]
so
\[\mu_n\lambda_n\underset{n\to \infty}\longrightarrow\frac{c'_{-1}}{c_{-1}}\in \C-\{0\}.\]
Then $\rho_n\to 0$ if and only if $\rho'_n\to 0$. Likewise, $\rho_n\to \infty$ if and only if $\rho'_n\to \infty$. 
given that $[w,w']^Z\subseteq [v,v']^Z$  or $ [v,v']^Z\subseteq [w,w']^Z$, the only possibility is that $\rho_n$ and $\rho'_n$ tend neither to $0$ or infinity, which according to Lemma \ref{noncomp} implies that $v=w'$ and $v'=w$. 
Then we can chose $\sigma_{v}=\sigma_{w'}$ and $\sigma_{v'}=\sigma_{w}$, which implies that $\lambda_n \mu_n=1$ so the cycle multiplier, ie $c'_{-1}/c_{-1}$ is $1$ as required. 

Under hypothesis of Lemma \ref{annocritiq}, as $d_0>1$
we have
\[ \mu_n\lambda_n=\lambda^{d_0-1} \left(\frac{c'_{n,-d_0}}{ c_{-d_0}}\right)  \text{ so } \frac{\rho'_n}{\rho_n}\approx \lambda_n^{d_0-1} \left( \frac{c'_{d_0}}{c_{d_0}}\right).\]

Thus supposing that $w\in[v,v']$, ie $\rho_n\to\C^\star\cup\{\infty\}$, as $\lambda_n^{d_0-1}\to\infty$ then we have $\rho'_n\to\infty$, ie $w'\notin[v,v']$. The case $w'\in[v,v']$ is similar.


\section{Bi-critical case}\label{chap6}

We now want to study rescaling-limits in the case of rational maps of degree $d$ with exactly two critical points (including for example the case of degree two rational maps). In this case the critical points have exact multiplicity $d-1$. Such maps are said to be bi-critical.

In this subsection we prove the following theorem and conclude with the proof of Theorem \ref{omega}.

 \begin{theoremint2}
Let ${\bf F}$ be a portrait of degree $d$ with $d+1$ fixed points and exactly $2$ critical points and let $({\cal F},{\cal T}^X)\in\partial\Rat_{{\bf F},X}$. Suppose that there exists $(\displaystyle f_n,y_n,z_n)\overset{\lhd}{\longrightarrow}{ \F}$ in $\Rat_{{\bf F},X}$ such that for every $n$, $x_n(X)$ contains all the fixed points of $f_n$.
 Then the map $\F$ has at most two critical cycles of spheres; they have degree $d$.

Assume that there exists at least one rescaling limit. Then there is a vertex $ w_0$ separating three fixed points which is fixed and such that $f_{ w_0}$ has finite order $k_0>1.$
 Denote by $v_0$ the critical vertex separating $w_0$ and the two critical leaves.
\begin{enumerate} 
\item\label{class2}  Either $v_0$ belongs to a critical cycle of period $k_0$ and
\begin{enumerate} 
	\item\label{class2a} its associated cover has a parabolic fixed point;
	\item\label{class2b} if there is a second critical cycle then it has period $k'_0>k_0$, its associated cover has a critical fixed point with local degree $d$ and the cover associated to $v_0$ has a critical point that eventually maps to the parabolic fixed point.
\end{enumerate}
\item\label{class11} Or $F^k(v_0)\notin {\cal T^X}$ for some $k<k_0$; in this case there is exactly one critical cycle; it has period $k'_0> k_0$ and its associated cover has a critical fixed point with local degree $d$.
\end{enumerate}
\end{theoremint2}


Our interest for these maps comes from the following lemma, which makes this case easier to understand.

\begin{lemma}\label{ptscrit}    
Let ${\cal F}:{\cal T}^Y\to {\cal T}^Z$ be a cover between trees of spheres. Every critical vertex lies in a path connecting two critical leaves. Each vertex on this path is critical.
\end{lemma}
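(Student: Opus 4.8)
The plan is to exploit the fact that, since $f_v:\mathcal S_v-Y_v\to\mathcal S_{F(v)}-Z_{F(v)}$ is an (unramified) cover, every critical point of the degree-$d_v$ ramified cover $f_v:\mathcal S_v\to\mathcal S_{F(v)}$ (where $d_v:=\deg(f_v)$) must sit at an attaching point $i_v(e)$. Writing $\deg(e)$ for the common local degree of $\mathcal F$ along an edge $e$, the Riemann--Hurwitz formula on the sphere then reads $\sum_{e\in E_v}(\deg(e)-1)=2d_v-2$. I will call an edge \emph{critical} when $\deg(e)\geq 2$, and I first record the elementary observation that a vertex $v$ is critical (i.e.\ $d_v\geq 2$) if and only if it is incident to a critical edge.

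The key step is to show that every \emph{internal} critical vertex is incident to at least two critical edges. Indeed, each local degree satisfies $\deg(e)\leq d_v$; if $v$ had a single critical edge $e_0$, the identity $\sum_{e\in E_v}(\deg(e)-1)=2d_v-2$ would force $\deg(e_0)=2d_v-1$, which contradicts $\deg(e_0)\leq d_v$ as soon as $d_v\geq 2$. Combined with the compatibility condition that the local degree of an edge is read the same from both of its endpoints, this means criticality propagates: following a critical edge out of an internal critical vertex lands on a vertex which is again critical (when it is internal) or is a critical leaf (when it is a leaf).

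With these two facts the conclusion follows by a walk in the tree. Starting from the given critical vertex $v$ I choose two distinct critical edges $e_1\neq e_2$ at $v$, and from each I walk along critical edges: at every internal vertex reached I arrive along one critical edge and, having at least two, may leave along another, never reusing an edge. Since $T^Y$ is a finite tree this walk cannot cycle and must terminate, and it can only terminate at a leaf; the terminal leaf carries a critical edge, hence is a critical leaf $\ell_1$ (resp.\ $\ell_2$). As $e_1\neq e_2$ the two walks proceed into the disjoint branches $B_v(e_1)$ and $B_v(e_2)$, so their concatenation is the simple path $[\ell_1,\ell_2]$, it passes through $v$, and by construction all its edges are critical and all its internal vertices are critical. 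This is exactly the asserted path connecting two critical leaves through $v$ with every vertex on it critical.

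The main obstacle is the quantitative step that an internal critical vertex must carry at least two critical edges; everything else is bookkeeping on the finite tree. This relies precisely on the two numerical inputs $\sum_{e\in E_v}(\deg(e)-1)=2d_v-2$ and $\deg(e)\leq d_v$, so I would take care to justify that all ramification of $f_v$ lies at the attaching points (this is where the hypothesis that $f_v$ restricts to a cover off $Y_v$ is used) and that the local degree never exceeds the global degree.
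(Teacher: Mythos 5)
Your proof is correct and takes essentially the same route as the paper: the paper likewise locates all ramification of $f_v$ at the attaching points, uses the Riemann--Hurwitz count to conclude that each internal critical vertex meets at least two critical edges (phrased there as $f_v$ having two distinct critical points), and propagates criticality along edges via the shared local degree. Your explicit two-sided walk to the leaves is just a constructive rephrasing of the paper's argument, which instead takes a maximal path of critical vertices and shows its endpoints must be critical leaves.
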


\begin{proof} Let $v$ be a critical vertex of ${\cal F}$. Then $f_v$ has at least two distinct critical points. There are at least two distinct edges attached to $v$. So $v$ is on a path of critical vertices.

Let $[v_1,v_2]$ be such a path with a maximal number of vertices. From this maximality property, we see that there is only one critical edge (edge with degree strictly greater than one) attached to $v_1$. If $v_1$ is not a leaf then $f_{v_1}$ has just one critical point and that is not possible. So $v_1$ is a leaf. Similarly, $v_2$ is a leaf. 
\end{proof}

\begin{notation}
We will denote by  $c$ and $c'$ the two critical leaves. Then according to Lemma \ref{ptscrit} 
the critical vertices are the vertices of $C_d:=[c,c']$. We will use the notation $c_k:=F^k(c)$ and $c'_k:=F^k(c')$ when they are well defined.
\end{notation}

\medskip
\noindent{\bf The fixed vertex $w_0$ in Theorem \ref{class0}.}

We will call a principal branch every branch attached to a critical vertex in $\T^Y$ with vertices of degree one and containing a fixed leaf. We prove that there are two fixed leaves lying in the same principal branch and that the vertex separating them and $C_d$ is fixed. This corresponds to the Case 1a below for which we can prove this lemma. The others cases are not absurd.


{\bf Case1.} At least two fixed leaves are not critical.

{\bf Case1a.} Two non critical leaves $\alpha$ and $\beta$ lie in the same principal branch. We will see that this is the only possible case. 

Denote by $w_0$ the vertex separating $\alpha$, $\beta$ and a critical leaf. Then $w_0$ is not critical and both of the critical leaves lie in the same branch on it. Let $v_0$ be the vertex separating $w_0$ and the two critical leaves. Then $A:=[w_0,v_0]\cup C_d$ maps bijectively to its image which is inside a branch $B$ on $F(w_0)$. 
The iterates of $w_0$ lie in $]\alpha,\beta[$. Suppose that $w_0$ is not fixed and for example that $w_0$, $F(w_0)$ and $\beta$ lie on $]\alpha,\beta[$ in this order. Then the only pre-image of $a_{F(w_0)}(\alpha)$ is $a_{w_0}(\alpha)$. It follows that $F(B_{w_0}(\beta))\subset B_{w_0}(\beta)$ and that $B\subset B_{w_0}(\beta)$ so $B$ cannot contain any critical periodic vertex. Thus $F(w_0)=w_0.$ 
 
 As $w_0\in [\alpha,\beta]$, it follows that $f_{w_0}$ is conjugate to a rotation. If it doesn't have finite order, then the iterates of $B$ are branches attached at the iterates of $f_{w_0}(a_{w_0}(v_0))$ which are all distinct and contains the iterates of $C_d$ which is absurd. So $w_0$ has finite order. 

{\bf Case1b.} All the critical leaves lie in different principal branches. Let $\alpha$ and $\beta$ be two critical leaves. Then there exists a critical vertex $w_0$ in $]\alpha,\beta[$, the closest to $\alpha$. The vertex $w_0$ is not fixed. We can prove that there exists a fixed leaf $\gamma$ such that ${B_{F(w_0)}(\gamma)\subset B_{w_0}(\gamma)}$. Indeed, suppose that there is no $\gamma$.
For every fixed leaf $\delta$, the point $a_{w_0}(\delta)$ is a preimage of $a_{F(w_0)}(v_0)$.  Either one of the fixed leaves $c$ is critical so the $a_{w_0}(c)$ the $a_{w_0}(\alpha) (\neq a_{w_0}(c))$ would be $d+1$ preimage of $a_{F(w_0)}(v_0)$ counting with multiplicities. Or all the fixed leaves lie in different branches on $w_0$; thus as $f_{w_0}$ is the uniform limit of $\phi_{n,F(w_0)}^{-1}\circ f_n\circ \phi_{n,w_0}$ in the neighborhood of the $a_{w_0}(\delta)$, the the $a_{w_0}(\delta)$ would be again exactly $d+1$ preimage of $a_{F(w_0)}(v_0)$ counting with multiplicities.
In both cases we obtain a contradiction.

Let $B_0:=B_{w_0}(\gamma)$. We have $F(B_0)\subset B_0$.
Then $F(C_d)\subset B_0$ because if not $c_0$ is a critical leaf then $$f(a_{w_0}(c_0))=a_{F(w_0)}(w_0)=F(a_{w_0}(\alpha))$$ so $a_{F(w_0)}(w_0)$ would have at least $d+1$ preimages counted with multiplicities which is absurd. It follows that every periodic vertex lies in $B_0$.

Suppose that $v$ is a periodic critical internal vertex. 
If $F(w_0)\notin C_d$ then $v\notin B_0$ but $F(v)\in B_0$. But $F(B_0)\subset B_0$ so $v$ cannot be periodic which is absurd. Thus $F(w_0)\in C_d$ and it follows that $B_v(F(v))$ maps to a branch on $F(v)$ so maps inside itself. But $F(v)\neq v$ so $v$ cannot be periodic which is absurd.

{\bf Case2.} There is just one non critical fixed leaf $\alpha$. Then the vertex separating $\alpha$ and the two critical leaves is fixed and has maximal degree so, according to Lemma \ref{cvuutil}, the approximating sequence converges in $\rat_d$. Contradiction.

{\bf Case3.} All fixed leaves are critical. Then $C_d$ maps bijectively to itself. Denote by $v$ the periodic critical internal vertex. As the approximating sequence does not converge in $\rat_d$ then  $v$ is not fixed. From the critical annulus lemma (\ref{annocritiq}) $F^2(v)\notin [v,F(v)]$. As  $C_d$ maps bijectively to itself the order of the vertices $v,F(v)$ and $F^2(v)$ has to be preserved so $v$ cannot be periodic. This is absurd.


\begin{notation}
The vertex $w_0$ has a finite order that we denote by $k_0$. The vertex $w_0$ is not critical, so it doesn't lie in $C_d$. We denote by $v_0\in T^X$ the vertex separating $c,c'$ and $w_0$. 
\end{notation}

\begin{remark}\label{rembij} 
 Suppose that $v'$ is a critical periodic internal vertex. Denote by $ B^Y_0$ the branch on $w_0$ containing the two critical points (so $v'$ too) and for $1\leq k\leq k_0$, denote by $ B_k^\star\in IV^\star$ the branch attached to $a^\star_{w_0}(F^k(v'))$. We have $F: B^Y_k\to  B^Z_{k+1}$ is a bijection.
\end{remark}

\medskip
\noindent{\bf Point \ref{class2a} of Theorem \ref{class0}.} Here we suppose that $v_0$ is not forgotten by $F_{k_0}$. 
The annulus $A_0:=]\!] { w_0},{v_0}[\![$ does not contain vertices of degree $d$ so from Corollary \ref{anneaudeg1} we know that $F$ is injective on $\overline{A}_0$. Moreover, elements of $C_d$ have maximum degree,  so $F$ is injective on $A_0\cup C_d$. 
According to remark \ref{rembij}, $F$ is bijective from $ B^Y_k$ to $ B^Z_{k+1}$ for $1\leq k< k_0$, thus, the vertex $v_k:=F^k({v_0})\in B^Z_k $ so $v_{k_0}\in  B_0$.

We have three cases: $v_{k_0}\in[ { w_0},{v_0}], {v_0}\in[ { w_0}, v_{k_0}]$ or $v_{k_0}\in A_0-[ { w_0},{v_0}]$.
In the first two cases, as we know that $F^k(A_0\cup C_d)$ for $1\leq k< k_0$ is included in $ B_k$ which contains only vertices of degree 1, then the non critical annulus lemma (\ref{multiplic}) applied to $F^k(A_0)$ ensure that $v_{k_0}={v_0}$. Hense the vertex ${v_0}$ is periodic with period $k_0$. It is the only vertex of degree $d$ in its cycle. As $a_{w_0}(v_0)$ is a fixed point of $f^{k_0}$ with multiplier 1, according to the same lemma the rational map $f^{k_0}:\S_{v_0}\to \S_{v_0}$ has a parabolic fixed point at $i_{{v_0}}( { w_0})$. According to Corollary \ref{cvuutil} we have $k_0>1$.

 \begin{figure}
  \centerline{\includegraphics[width=9cm]{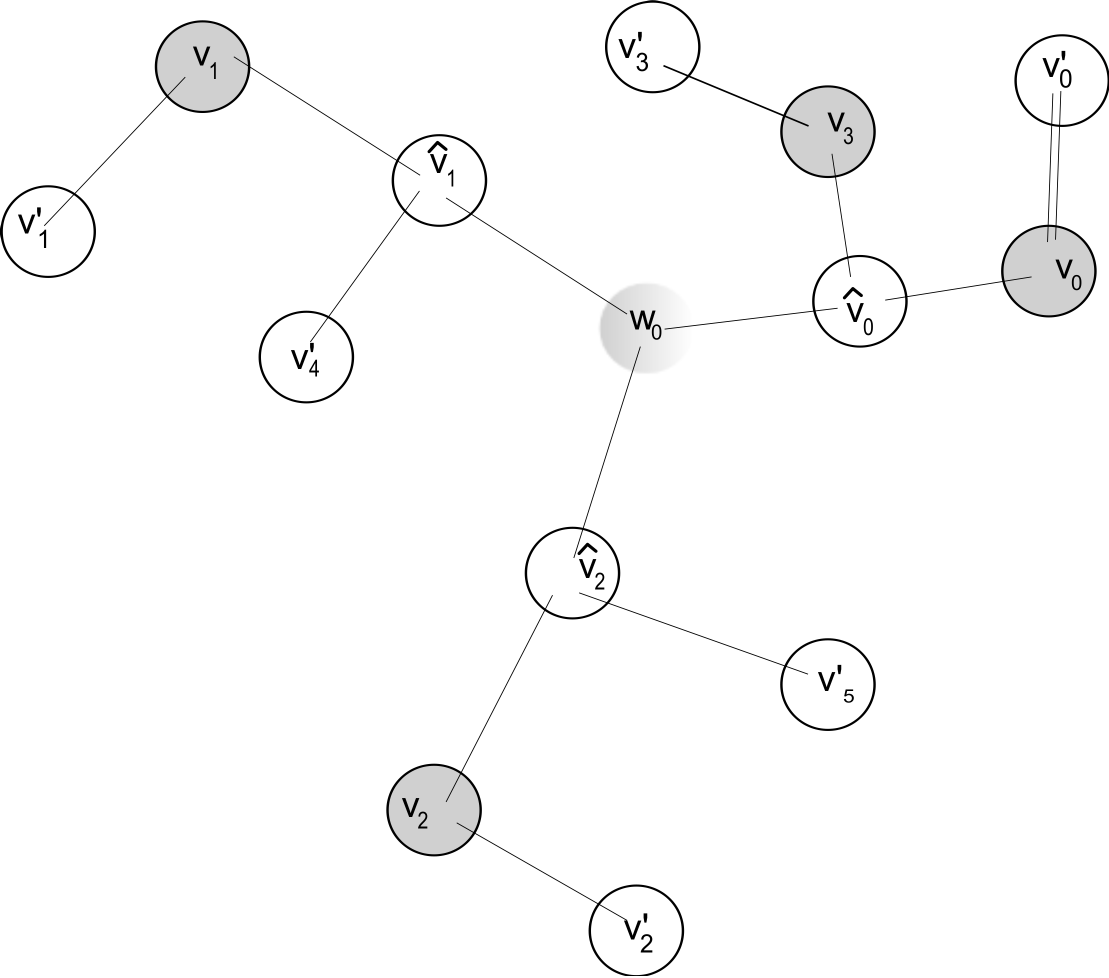}}
   \caption{Example (simplified) of the imposible case of the proof of point \ref{class2} with $k_0=3$}
\label{absurde} \end{figure}

To finish the proof we show that this third case is absurd. 
Indeed, in this case, $v_{k_0}, {v_0}$ and $ { w_0}$ are not on a common path. So there is a vertex $\hat v_0\in [w_0,v_0]$ that separates $v_{k_0}$, $v_0$ and $w_0$. Otherwise, every critical cycle of spheres has to correspond to a cycle of vertices that intersects $C_d$ because by definition a critical cycle of spheres doesn't have degree 1. Let $v'_0$ be a vertex in this intersection. The vertices $v'_0$ and ${v_0}$ lie on a same branch of $\hat {v_0}$ disjoint from the one containing $v_{k_0}$. 
As $A_0\cup C_d$ and its iterates map bijectively to their image, we know that $v_{k_0}$ and $v'_{k_0}$ lie on a same branch of $\hat {v_0}$ so $\hat {v_0}$ separates the vertices $ { w_0}, v'_0$ and $v'_{k_0}$. These four vertices lie in $A_0$ and are not forgotten by $F^{k_0}$, so according to Lemma \ref{definiX} the $k_0$ iterates of $\hat {v_0}$ are well defined. 

As $A'_0:=]\!] { w_0},\hat {v_0}[\![\subset A_0$, its $k_0$ iterates map bijectively to their images which are the $]\!] { w_0},\hat {v_k}[\![$ which all contain vertices of degree one except maybe $]\!] w_0,\hat v_{k_0}[\![$. 
But $w_0$, $\hat v_0$ and $v_0$ are aligned in this order, so for $0\leq k\leq k_0$, it is the same for $w_0$, $\hat v_k$ and $v_k$ then both $\hat v_0$ and $\hat v_{k_0}$ lie on the path $[w_0,v_{k_0}]$. So we have the inclusion
$]\!] w_0,\hat v_0 [\![\subset]\!] w_0,v_{k_0} [\![ $ or the inverse inclusion. 
Thus according to the non critical annulus lemma (\ref{multiplic}) we have $\hat {v_0}=\hat v_{k_0}$. Then the situation is similar to the one in Figure \ref{absurde}.

As we did in previous cases for ${v_0}$, we can prove that the cover associated to the cycle containing $\hat {v_0}$ has a fixed point with multiplier 1 using the non critical annulus lemma. As $\hat {v_0}$ has degree 1, this cover in a projective chart is the identity or a translation. If it is a translation then the branch $B_{\hat {v_0}}(v'_{k_0})$ would be of infinite orbit which contradicts the existence of $v'_{k_0}$ lying on it  and which is periodic. If it is the identity then $F(B_{\hat {v_0}}(v'_{k_0}))\subset B_{\hat {v_0}}(v'_{k_0})$, this contradicts the fact that $v'_0$ lies in the orbit of $v'_{k_0}$. So it is again absurd.

\medskip
\begin{notation}
As in Figure \ref{Rescaling3}, we define $D^\star_i:=T^\star- D^\star_{v_i}(v_0)$. Denote by ${\alpha:=a_{v_0}(w_0)}$ the parabolic fixed point and $\beta_1,\ldots,\beta_{d-1}$ its preimages by $f^{k_0}$. If there is a branch in $T^Y$ attached to $\beta_i$ on ${v_0}$, we denote it by ${B'_i}$. If it is not the case, we set ${B'_i}=\emptyset$.
 \end{notation}

\begin{remark}\label{remresc}
As $D^\star_k\subset  B^\star_k$, according to remark \ref{rembij} we have  $F:D^Y_k\to D^Z_{k+1}$ is a bijection for $1\leq k<k_0$. Because $\alpha$ is not critical, the ${B'_i}$ contain only elements of degree 1 so from Corollary \ref{branchdeg1}, we know that $F({B'_i})$ is the branch on $v_1$ attached to $f(\alpha)$. This means $F({B'_0})=B_{v_0}(w_0)$. We deduce that ${B'_i}\neq\emptyset$. Moreover, as $F:T^Y\to T^Z$ is surjective, we have $F(D_0-{B'_i})=D_1$.
\end{remark}

\begin{remark} As $k_0>1$ we can deduce from the last remark that there are no critical fixed leaf.
\end{remark}

\begin{corollary}\label{autreptcrit}
There exists a critical leaf $c_0$ such that $B_{v_0}(c_0)$ does not contain any critical spheres cycle.
\end{corollary}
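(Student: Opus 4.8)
The plan is to argue by contradiction, assuming that \emph{both} branches $B_{v_0}(c)$ and $B_{v_0}(c')$ fully contain a critical cycle of spheres, and to derive that $\F$ would then carry three distinct critical cycles, contradicting the bound of two already established in the first part of Theorem \ref{class0}. Write $v_k:=F^k(v_0)$, so that the cycle of $v_0$ is $\mathcal{C}_0=\{v_0,v_1,\dots,v_{k_0-1}\}$, of period $k_0$. Since $B_{v_0}(c)$ and $B_{v_0}(c')$ are distinct branches on $v_0$ they are disjoint, so the whole argument reduces to showing that $\mathcal{C}_0$ meets neither of them: once this is known, any critical cycle contained in one branch is automatically distinct from $\mathcal{C}_0$, and two such cycles (one per branch) are distinct from each other as well.

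The key step is thus the disjointness of $\mathcal{C}_0$ from the two branches. First I would record that $B_{v_0}(c)$ and $B_{v_0}(c')$ are both contained in $B_0:=B_{w_0}(v_0)$: indeed $v_0$ separates $w_0$ from $c$ and from $c'$, so these branches lie on the side of $v_0$ opposite to $w_0$. The vertex $v_0$ is the base of the two branches, hence lies in neither. For $1\le k\le k_0-1$, Remark \ref{rembij} (applied with $v'=v_0$, which is a critical periodic internal vertex) places $v_k$ in the branch $B_k$ on $w_0$ attached at $a_{w_0}(v_k)$. Because $w_0$ is fixed, $a_{w_0}(v_k)=f_{w_0}^{\,k}(a_{w_0}(v_0))$, and since $f_{w_0}$ has order exactly $k_0$ while $v_0$ has period $k_0>1$, the points $a_{w_0}(v_0),\dots,a_{w_0}(v_{k_0-1})$ are pairwise distinct. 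Hence $B_0,\dots,B_{k_0-1}$ are pairwise disjoint branches on $w_0$, so $v_k\in B_k$ is disjoint from $B_0\supseteq B_{v_0}(c)\cup B_{v_0}(c')$ for every $1\le k\le k_0-1$. This shows $\mathcal{C}_0\cap\bigl(B_{v_0}(c)\cup B_{v_0}(c')\bigr)=\emptyset$.

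With this in hand the conclusion is immediate: if both branches contained a critical cycle of spheres, the disjointness of the branches (and of each from $\mathcal{C}_0$) would produce three pairwise distinct critical cycles, which is impossible. Therefore at least one branch contains no critical cycle, and I take $c_0$ to be the critical leaf it contains. The only delicate point — and the step I would present most carefully — is precisely the disjointness of $\mathcal{C}_0$ from the branches: it rests on the fact that $f_{w_0}$ genuinely has order $k_0$, so that the intermediate iterates $v_1,\dots,v_{k_0-1}$ really escape the branch $B_0$ carrying the two critical leaves. This is exactly where the hypothesis $k_0>1$ and the bijectivity statement of Remark \ref{rembij} are used, and everything else is then a formal counting of cycles against the bound of two.
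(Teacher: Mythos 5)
Your argument has a genuine gap: it is circular. You invoke ``the bound of two already established in the first part of Theorem \ref{class0}'', but at the point where Corollary \ref{autreptcrit} occurs in the paper no such bound has been proved. The statement that $\F$ has at most two critical cycles of spheres is established only at the very end of case \ref{class2} (the paragraph ``End of point \ref{class2} (Number of rescalings)''), and its proof uses Corollary \ref{autreptcrit} itself: one writes $C_d=[c',v_0]\cup[v_0,v'_0]\cup[v'_0,c]$ and excludes periodic vertices from $[c',v_0]-\{v_0\}$ precisely by appealing to the corollary. What is available when the corollary is proved is only the existence of the fixed vertex $w_0$ with $f_{w_0}$ of finite order $k_0>1$, point \ref{class2a} (the period-$k_0$ cycle of $v_0$ whose return map has a parabolic fixed point $\alpha$), and Remarks \ref{rembij} and \ref{remresc}. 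Nothing at that stage limits the number of critical cycles, so exhibiting ``three pairwise distinct critical cycles'' contradicts nothing (Theorem \ref{alpha} does not help: it bounds only rescalings with non post-critically finite limits, and gives $2d-2$, not $2$). Your disjointness computation for $\mathcal{C}_0$ is essentially fine, modulo one glossed step: a rotation of order $k_0$ has two fixed points, so you must also check that $a_{w_0}(v_0)$ is not one of them (it is not, since $f_{w_0}$ already fixes $a_{w_0}(\alpha)$ and $a_{w_0}(\beta)$, which lie in other branches); but this repair does not save the overall structure.

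The paper's proof is dynamical rather than a cycle count, and it yields a strictly stronger conclusion that the later arguments actually need. Since $f^{k_0}:\S_{v_0}\to \S_{v_0}$ has a parabolic fixed point $\alpha$, one of its critical points lies in the parabolic basin, hence has an infinite forward orbit never meeting $\alpha$. Taking $B$ to be the branch of $T^Y$ on $v_0$ attached at this critical point (it contains a critical leaf $c_0$), all iterates of $B$ remain branches attached along the orbit of $a_{v_0}(c_0)$ --- via Corollary \ref{attachbranch} when the iterate lies in $D_0-\bigcup B'_i$ and Corollary \ref{branchdeg1} when it lies in some $D_i$, $i>0$ --- so the Branches Lemma \ref{branchcrit} applies with $a_0:=a_{v_0}(c_0)$ non-periodic, and its conclusion is that $B$ contains \emph{no periodic internal vertex at all}. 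That strength matters: in ``End of point \ref{class2}'' the corollary is used to rule out individual periodic vertices on segments of $C_d$, and a counting argument like yours, even if made non-circular, would at best show that the two branches cannot each wholly contain a critical cycle; it would not exclude a critical cycle with one vertex inside a branch and the rest outside. So both the logical foundation (the bound of two) and the strength of the conclusion are missing from your proposal.
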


\begin{proof}
The orbit of a critical point $z_0$ lies in  the basin of the parabolic fixed point $f(\alpha)$ of $f^{k_0}:\S_{v_0}\to \S_{v_0}$ (cf \cite{DynInOne} for example). The branch $B$ of $T^Y$ on $ v_0$ corresponding to this critical point contains a critical leaf $c_0$. 

If $B$ contains a periodic vertex, then its iterates are defined as soon as they are branches.
Iterates of $a_{v_0}(c_0)$ do not intersect $\alpha$, because this one is pre-fixed so all the iterates of $B$ are branches. Indeed, either they lie in $D_0-{\bigcup B'_i}$ and we can apply Corollary \ref{attachbranch}, or they lie in the $D_i$ with $i>0$ and then we have Corollary \ref{branchdeg1}. From this we deduce by Lemma \ref{branchcrit} that $B$ does not contain critical spheres cycle.
\end{proof}

\begin{notation}From now, we will denote by $c'$ this critical leaf and $c$ the other.
\end{notation}

 \begin{figure}
  \centerline{\includegraphics[width=15cm]{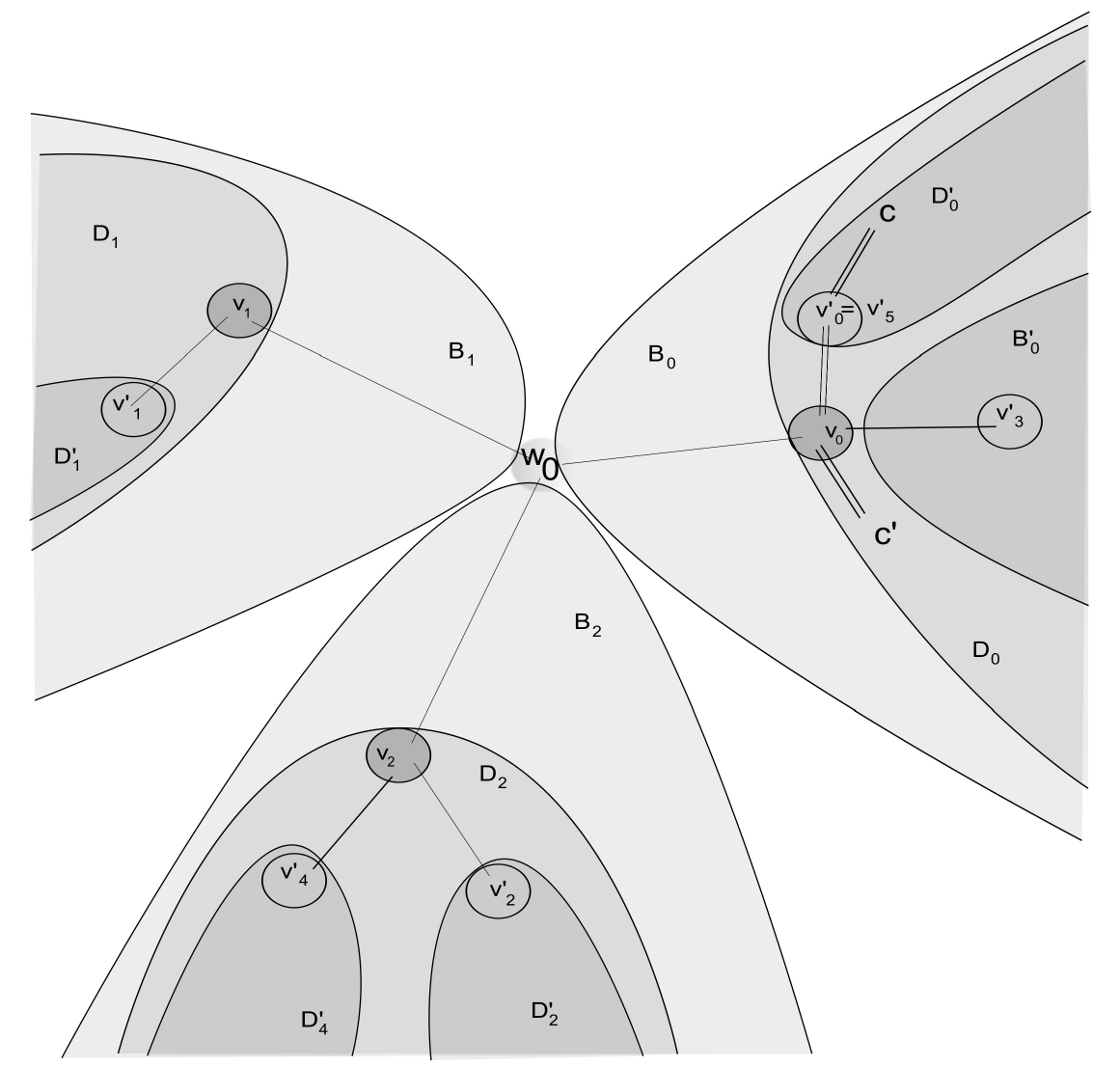}}
   \caption{Simplified representation of a tree $\T^X$ for an example of a cover ${\F:\T^Y\to \T^Z}$  limit of degree $2$ rational maps having a two critical spheres cycle. One of these has period 3 and the other period 5 with the notation introduced in the proof of Theorem \ref{class0}.}
\label{Rescaling3} \end{figure}

\medskip
\noindent{\bf Point \ref{class2b} of Theorem \ref{class0}.}

Every critical spheres cycle has at least degree $d$ so it has a vertex $v'_0$ in $C_d$. We define $v'_k:= F^k(v'_0)$ and $k'_0$ to be the period of this cycle.

Let $D'_k:=T^Y- B_{v'_k}(w_0)$.
Let prove that $F(D'_k)=D'_{k+1}$ and ${f(a_{v'_k}(w_0))=a_{v'_{k+1}}(w_0)}$. According to remark \ref{remresc}, it is true for every  $k$ when  $v'_k \notin {B'_0}$ and we always have $f(a_{v'_k}(w_0))=a_{v'_{k+1}}(w_0)$. Suppose that there exists $i$ such that $v'_k \in {B'_i}$. As we have ${D'_k=B_{v_0}(v'_k)-]\!] v_0,v'_k[\![}$ and $F$ is bijective on $B_{v_0}(v'_k)$, we deduce that $$F(D'_k)=B_{v_1}(v'_{k+1})-]\!] v_1,v'_{k+1}[\![=D'_{k+1}.$$
 Moreover $F$ is a bijection between the edges of $v'_k$ and the one of $v'_{k+1}$. We deduce that $f(a_{v'_k}(w_0))=a_{v'_{k+1}}(w_0)$. 
 
From $F(D'_k)=D'_{k+1}$ we conclude by Lemma \ref{branchcrit} that $D'_0$ does not contain any critical periodic internal vertex so after supposing that $v'_0$ is the degree $d$ sphere of the cycle the closest to $ { w_0}$, we deduce that $v'_0$ is the only critical vertex of the cycle. Thus the associated cover has degree 1. As $f(a_{v'_k}(w_0))=a_{v'_{k+1}}(w_0)$ and $a_{v'_{0}}(w_0)$ is critical, this cover is conjugate to a degree $d$ polynomial.

Now we prove that $k'_0>k_0$.
As $f_{{v_0}}$ has degree $d$, a critical point of degree $d$ cannot be a preimage of the parabolic fixed point. So $v'_0\in D_0- {\bigcup B'_i}$. Thus according to remark \ref{remresc} the $v'_k$ lie in the $D_k$ for $0\leq k\leq k_0$ so $k'_0\geq k_0$. If there is equality then we have $F(]\!] { v_0},v'_0[\![)=]\!]F^k(v_0),F^k(v'_0)[\![=]\!]v_0,F^k(v'_0)[\![$ which contradicts Lemma \ref{annocritiq}.

Suppose that one of the iterates of $v'_0$ lies in some ${B'_i}$. Let $v'_i$ be the first iterate of $v'_0$ in ${B'_i}$. According to the above, for $0\leq k< k+k_0\leq i$ we have ${F^{ k_0}:]\!]{v_0},v'_k[\![\to ]\!]{v_0}, v'_{k+k_0} [\![}$ is a bijection. Thus 
$$f^{k_0+i}\circ a_{{v_0}}(v'_0)=f^{k_0}\circ f^i\circ a_{{v_0}}(c_0)=f^{k_0}\circ a_{{v_0}}(v_i)=f^{k_0}(\beta_i)=\alpha.$$ 
So a preimage of the parabolic fixed point is a critical point.

Let us prove by contradiction that  one of the iterates of $v'_0$ lies in some ${B'_{i_0}}$, which will finish the proof.
If it is not the case, we can apply Lemma \ref{branchcrit} 
 to the branch $B:=B_{v_0}(c)$ because the iterates of $B$ lie in the $D_i$ for $i\neq 1$ or in $D_0-{\bigcup B'_i}$ according to remark \ref{remresc} and because these iterates are disjoint to $B_{v_0}(c')$.

\medskip
\noindent{\bf End of point \ref{class2} (Number of rescalings).}

We prove that in the case \ref{class2}, there are at most two critical cycles of spheres.
We have $C_d=[c',v_0]\cup[v_0,v'_0]\cup[v'_0,c]$. But $[c',v_0]-\{v_0\}$ does not contain periodic vertices. According to Corollary \ref{autreptcrit},  it is the same for $[v_0,v'_0]-\{v_0,v'_0\}$ by definition of $v'_0$ and $[v'_0,c]$ because $[v'_0,c]\subset D'_0$ hence $C_d$ contains only two periodic vertices.

\medskip
\noindent{\bf Point \ref{class11} of Theorem \ref{class0}.}

First recall the following lemma from \cite{A2}.

\begin{lemma}\label{ext22} Suppose that $\displaystyle { f}_n\overset{\lhd}{\longrightarrow}{ \F}$ and $z\in Z\setminus X$
then after passing to a subsequence there exists extensions $(f_n, y_n, z_n)_n\lhd(\tilde f_n,\tilde y_n,\tilde z_n)_n$ with $z\in \tilde X$ and $\forall n\in\N, \tilde x_n(z)=z_n(z)$ and $\tilde \F$ such that $\displaystyle {\tilde f}_n\overset{\lhd}{\longrightarrow}{\tilde \F}$  and 
\begin{itemize}
\item $\T^X\lhd\T^{\tilde X}$, $\T^Y\lhd\T^{\tilde Y}$,
and $\T^X\lhd\T^{\tilde Z}$,
\item $\forall v\in IV^{ Y}, F(v)\in T^X\implies \tilde f_v=f_v$.
\end{itemize}
\end{lemma}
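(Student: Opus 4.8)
The plan is to treat this as an ``add one marked point'' operation at the finite level, followed by a compactness extraction, and then to transport the limiting dynamics through the forgetful map.

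First I would build the extended data at the finite level. Since $z\in Z$ it already carries a position $z_n(z)$, so to promote it to a dynamically marked point I set $\tilde Y:=Y\cup\{z\}$, keep $\tilde z_n:=z_n$ on $Z$, and put $\tilde y_n(z):=z_n(z)$; this is exactly what is forced by $\tilde y_n|_{\tilde X}=\tilde z_n|_{\tilde X}$ together with the requirement $\tilde x_n(z)=z_n(z)$. The image $\tilde F(z)$ is prescribed by commutation, $\tilde z_n(\tilde F(z))=f_n(z_n(z))$, adding one point to $\tilde Z$ to carry this value if it is not already the position of a point of $Z$; and I record $\deg(z)$ as the local degree of $f_n$ at $z_n(z)$. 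After a first passage to a subsequence this integer and the finitely many combinatorial choices become independent of $n$, so $\tilde{\bf F}$ is a genuine portrait extending ${\bf F}$ and $(\tilde f_n,\tilde y_n,\tilde z_n)$ is dynamically marked by $(\tilde{\bf F},\tilde X)$ with $\tilde X:=X\cup\{z\}$, extending $(f_n,y_n,z_n)$ in the sense of $\lhd$.

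Next I would invoke the compactness of the space of trees of spheres from \cite{A2}: after a further passage to a subsequence, $\tilde y_n$ and $\tilde z_n$ converge to trees of spheres $\T^{\tilde Y}$ and $\T^{\tilde Z}$, the covers converge non-dynamically to some $\tilde\F:\T^{\tilde Y}\to\T^{\tilde Z}$ of portrait $\tilde{\bf F}$, and the common marking converges to a tree $\T^{\tilde X}$ compatible with both. Because $\tilde x_n=\tilde y_n|_{\tilde X}=\tilde z_n|_{\tilde X}$, the approximating isomorphisms can be chosen so that $\tilde\phi^Y_{n,v}=\tilde\phi^Z_{n,v}$ for every $v\in IV^{\tilde X}$, which is precisely dynamical convergence $\tilde f_n\overset{\lhd}{\longrightarrow}\tilde\F$.

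It then remains to establish the extension relations and the rigidity clause. For $\T^Y\lhd\T^{\tilde Y}$, and likewise for $Z$ and $X$, I would use that the operation forgetting $z$ is continuous on trees of spheres and carries $\tilde y_n$ to $y_n$; since $y_n\to\T^Y$ and the limit is unique up to the stabilization that forgetting performs, the tree obtained from $\T^{\tilde Y}$ by forgetting $z$ stabilizes to $\T^Y$, which is the meaning of $\lhd$. As adding marked points only refines the combinatorial tree, every $v\in IV^Y$ persists as a vertex of $\T^{\tilde Y}$ and $\tilde F$ restricts to $F$ there. For the clause $F(v)\in T^X\Rightarrow\tilde f_v=f_v$, fix $v\in IV^Y$ with $F(v)\in T^X$: as $F(v)$ lies in the common dynamical tree its chart is the shared one $\tilde\phi_{n,F(v)}$, and I may keep the chart at $v$ equal to the one used for the original convergence $f_n\to\F$; then $\tilde f_v$ is the locally uniform limit of $\tilde\phi_{n,F(v)}\circ f_n\circ\tilde\phi_{n,v}^{-1}$ outside $\tilde Y_v$, which coincides with the limit computing $f_v$ outside $Y_v$. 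The step I expect to be the main obstacle is exactly this last coincidence together with the forgetful-continuity argument: one has to check that enlarging the marking moves neither the sphere $\St_{F(v)}$ nor its dynamical chart once $F(v)\in T^X$, and that the added point $z$ does not create a new vertex disturbing the combinatorics along the relevant paths; the compactness input itself I would treat as a black box from \cite{A2}.
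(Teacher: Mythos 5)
The paper contains no proof of this lemma to compare against: it is recalled verbatim from \cite{A2} (``First recall the following lemma from \cite{A2}''), so your proposal can only be judged on its own merits. Judged so, your three-step architecture --- extend the marking at the finite level, extract a limit by the compactness machinery of \cite{A2}, then transport the old limit data through uniqueness of normalizing charts --- is the natural and expected route, and the last two steps are essentially sound. The mechanism you gesture at is the right one: an internal vertex separates at least three marked points, so any two chart sequences normalizing the same marking differ by Moebius maps tending to the identity (the fact underlying Lemma \ref{noncomp}); this is what makes $\T^Y\lhd\T^{\tilde Y}$ compatible with the forgetful operation, lets you keep $\tilde\phi^Y_{n,v}=\tilde\phi^Z_{n,v}$ on $IV^{\tilde X}$, and gives $\tilde f_v=f_v$ when $F(v)\in T^X$, since there both the old and the new chart at $F(v)$ normalize at least three common marked points and hence differ by maps tending to the identity, leaving the locally uniform limit unchanged.

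The genuine gap is in your first step: as you define it, $(\tilde f_n,\tilde y_n,\tilde z_n)$ is \emph{not} marked by any portrait, so the compactness theorem you invoke (stated for sequences with a fixed portrait $\tilde{\bf F}$) does not apply to it. A portrait requires $\sum_{a\in \tilde F^{-1}(b)}\deg(a)=d$ for \emph{every} $b\in\tilde Z$; at the new point $b=\tilde F(z)$ you marked only the single preimage $z$, so the sum is $\deg(z)<d$ in general. You must mark the entire fiber $f_n^{-1}\bigl(f_n(z_n(z))\bigr)$ in $\tilde Y$. This repair is harmless but has to be made explicitly: since $\sum_{a\in Y}(\deg(a)-1)=2d-2$ already forces every critical point of $f_n$ to be $y_n$-marked, the newly added preimages are simple, and after one subsequence extraction their number, their degrees, and all incidences become independent of $n$, so that $\tilde{\bf F}$ is then a genuine portrait. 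Relatedly, you pass over the injectivity of the extended markings: if $f_n(z_n(z))=z_n(w)$ persistently for some $w\in Z$, then $z_n(z)$ coincides with $y_n(a)$ for some $a\in F^{-1}(w)$, because the fiber over $z_n(w)$ is already exhausted, with multiplicities, by the marked points $y_n(a)$, $a\in F^{-1}(w)$; in that case $z$ must be identified with $a$ in $\tilde Y$ rather than added as a new point. These finitely many coincidence patterns must also be frozen along the subsequence before the limit extraction. With these corrections, the rest of your argument goes through.
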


Denote by $v'$ a periodic critical internal vertex.
According to this lemma, 
after considering a subsequence, we can find such a $(\tilde \F,\T^{\tilde X})$ such that ${ \tilde X}$ contains the first $k_0$ iterates of $c'$ (where $c'$ is the critical leaf such that $v'\notin]c',v_0[$). Then we can show for $k$ from $1$ to $k_0$ that the vertex $\tilde F^k(v_0)$ separate the vertices $w_0$, $\tilde F^k( c')$ and $\tilde F^k(v')$. It follows from Lemma \ref{definiX} that $v_0$ is not forgotten by $\tilde F^{k_0}$. Then we conclude by applying point \ref{class2} and using the fact that in this case there can be just two different critical cycles of spheres. It follows again that in this case there cannot be more than two critical cycles of spheres.

\begin{remark}We just proved that with the hypothesis of Theorem \ref{class0}, if there exists a critical cycle of spheres with an associated polynomial cover then, after passing to a subsequence, we are in case \ref{class2}.
\end{remark}



\medskip
\noindent{\bf Proof of Theorem \ref{omega2}.}

First recall the following theorem from \cite{A2}.

\begin{theorem}\label{proparbrplin}
Given a sequence $(f_n)_n$ in $\Rat_d$ for ($d\geq2$) with $p\in \N^*$ classes $M_1,\ldots,M_p$ of rescalings.
Then, passing to a subsequence, there exists a portrait ${\bf F}$, a sequence ${(f_n,y_n,z_n)_n\in\Rat_{{\bf F},X}}$ and a dynamical system between trees of spheres $(\F,\T^{ X})$ such that
\begin{itemize}
\item $\displaystyle { f}_n\overset{\lhd}{\underset{\phi_n^Y,\phi_n^Z}\longrightarrow}{ \F}$ and
\item $\forall i\in[\!\![1,p]\!\!], \exists v_i\in\T^Y$, $M_i\sim(\phi^Y_{n,v_i})_n$.
\end{itemize}
\end{theorem}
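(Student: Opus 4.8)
The plan is to manufacture a single finite marking that records, simultaneously, the information carried by all $p$ rescalings, to pass to a limit using the compactness of the space of trees of spheres, and finally to read each rescaling off as the change of coordinates at a distinguished vertex of $\mathcal T^Y$.

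First I would fix the marking. For each class $M_i$ choose a representative $(M_{i,n})_n$, of rescaling period $k_i$, with $M_{i,n}\circ f_n^{k_i}\circ M_{i,n}^{-1}\to g_i$. Pick three distinct points $q_{i,1},q_{i,2},q_{i,3}\in\S$ and set $x_{i,j,n}:=M_{i,n}^{-1}(q_{i,j})$; these are three distinct points of $\S$ whose images under the chart $M_{i,n}$ are the constants $q_{i,j}$. Let $Z_n\subset\S$ be the finite set consisting of the critical values of $f_n$, a union of periodic cycles of $f_n$ of total cardinality at least three, and the images $f_n(x_{i,j,n})$ for all $i,j$; put $Y_n:=f_n^{-1}(Z_n)$ with $\deg$ the local degree of $f_n$. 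Then $Y_n$ contains every $x_{i,j,n}$ and every critical point, and the Riemann-Hurwitz identities make $(f_n,y_n,z_n)$ a rational map marked by a portrait. By pigeonhole, after passing to a subsequence the cardinalities of $Z_n$ and $Y_n$, the combinatorics of $F:Y_n\to Z_n$, and all local degrees become independent of $n$; this fixes an abstract portrait $\mathbf F=(F:Y\to Z,\deg)$, injections $y_n:Y\to\S$, $z_n:Z\to\S$, and a subset $X\subseteq Y\cap Z$ coming from the chosen periodic cycles (so $|X|\geq 3$ and $y_n|_X=z_n|_X$), whence $(f_n,y_n,z_n)\in\Rat_{\mathbf F,X}$.

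Next I would extract the limit. By the compactness of the space of spheres marked by the finite set $X$, and then by the analogous compactness for marked covers, I pass to a further subsequence along which the marked spheres converge to a tree of spheres $\mathcal T^X$ and the marked covers $\mathcal F_n=(f_n,a_n^Y,a_n^Z)$ converge, non-dynamically, to a cover $\mathcal F:\mathcal T^Y\to\mathcal T^Z$ of portrait $\mathbf F$, with charts $\phi^Y_{n,v}$ and $\phi^Z_{n,w}$. Because the marking is dynamical, for $v\in IV^X$ the spheres and marks in source and target coincide, so the two charts $\phi^Y_{n,v}$ and $\phi^Z_{n,v}$ both realise the convergence of $a_n|_X$ and may be taken equal; this upgrades the convergence to dynamical convergence $\displaystyle{f}_n\overset{\lhd}{\underset{\phi_n^Y,\phi_n^Z}\longrightarrow}{\F}$ in the sense of Definition \ref{defcvdyn}, and produces the dynamical system $(\mathcal F,\mathcal T^X)$. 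Then, fixing $i$, the three distinct elements $x_{i,1,n},x_{i,2,n},x_{i,3,n}$ of $Y_n$ correspond to three distinct leaves $\xi_{i,1},\xi_{i,2},\xi_{i,3}$ of $T^Y$, which have a unique median internal vertex $v_i\in IV^Y$; at $v_i$ the values $a^Y_{v_i}(\xi_{i,1}),a^Y_{v_i}(\xi_{i,2}),a^Y_{v_i}(\xi_{i,3})$ are pairwise distinct. By the definition of convergence of marked spheres, $\phi^Y_{n,v_i}(x_{i,j,n})=\phi^Y_{n,v_i}\circ y_n(\xi_{i,j})\to a^Y_{v_i}(\xi_{i,j})$, three distinct limits, whereas $M_{i,n}(x_{i,j,n})=q_{i,j}$. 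Hence the sequence of Moebius transformations $\phi^Y_{n,v_i}\circ M_{i,n}^{-1}$ sends the fixed triple $(q_{i,1},q_{i,2},q_{i,3})$ to a triple converging to three distinct points; since a Moebius transformation is determined by the images of three points, this sequence converges to a Moebius transformation. This is exactly the assertion $M_i\sim(\phi^Y_{n,v_i})_n$, and it holds for every $i$.

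The main obstacle is the marking step. One must enlarge $Z_n$ enough that, after the extraction, the combinatorial data genuinely stabilise and each median vertex $v_i$ is a stable internal vertex (valence $\geq 3$) of the limit, while keeping $Z_n$ finite and the subset $X$ dynamically consistent so that the limit is a bona fide dynamical system and not merely a cover. The technical engine is the compactness of the space of trees of spheres marked by a finite set, invoked for the successive passages to subsequences; the delicate point is that the dynamical compatibility $\phi^Y_{n,v}=\phi^Z_{n,v}$ on $IV^X$ survives all of them, since this is precisely what allows the limiting cover to carry the iterates $f_v^k$ along orbits contained in $T^X$.
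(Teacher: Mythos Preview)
The paper does not give its own proof of this statement: it is introduced with the sentence ``First recall the following theorem from \cite{A2}'' and is simply quoted from the companion paper. So there is no in-paper argument to compare against; your outline is in effect a reconstruction of the proof in \cite{A2}.

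That said, your strategy---plant three marked points per rescaling via $x_{i,j,n}=M_{i,n}^{-1}(q_{i,j})$, enlarge $Z_n$ to contain the critical values and the images $f_n(x_{i,j,n})$, set $Y_n=f_n^{-1}(Z_n)$, stabilise the portrait by pigeonhole, pass to a limiting tree by compactness, and recover each $M_i$ as the chart at the median vertex of the three planted leaves---is the natural one and is essentially how the result is obtained in \cite{A2}. The final step, showing $\phi^Y_{n,v_i}\circ M_{i,n}^{-1}$ converges in ${\rm PSL}_2(\C)$ because it sends a fixed triple to a triple with three distinct limits, is correct and is the heart of the identification $M_i\sim(\phi^Y_{n,v_i})_n$.

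One point you should tighten is the construction of $X$. You say ``a union of periodic cycles of $f_n$ of total cardinality at least three'', but you have not explained how to choose these cycles coherently in $n$ so that the abstract set $X$, the restriction $F|_X$, and the condition $y_n|_X=z_n|_X$ all make sense after the pigeonhole extraction. The standard fix (and the one used in the companion paper and elsewhere in the present paper, cf.\ the hypotheses of Theorem~\ref{class0}) is to take the $d+1$ fixed points of $f_n$: for a generic degree-$d$ map these are distinct, and after passing to a subsequence either they remain distinct for all $n$, or one can label a subcollection of at least three that do. You should also remark that the $q_{i,j}$ may be chosen generically so that the points $x_{i,j,n}$ avoid the critical set and each other, ensuring the cardinalities of $Y_n,Z_n$ and the portrait combinatorics are genuinely constant along the subsequence. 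With these two clarifications the outline is sound.
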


Take a sequence of bi-critical maps in $Rat_d$ and suppose that it admits $p\geq 2$ dynamically independent rescalings of period at least $2$. Applying this theorem, passing to a subsequence we obtain such a $(\F,\T^{ X})$. Then according to Lemma \ref{ext22} we can suppose after passing to a subsequence that the portrait ${\bf F}$ satisfies the hypothesis of Theorem \ref{class0}. Then the conclusions follow immediately.


\section{The case of degree $2$}\label{last}

\subsection{Example}\label{decompmilnor}

In this part we propose to understand a concrete example of how to compute rescaling limits in the case of degree 2. Theorem \ref{class0} tells us how to find them but not how to prove that they exist.

In \cite{RemarksQuad} J. Milnor notices that we have the following surprising relation : the set of rational maps of degree 2 having a period 2 cycle with multiplier $-3$ (denoted by $Per_2(-3)$) has always a period 3 cycle with multiplier 1 (so is included in $Per_3(1)$).

During a MRC program session in June 2013 organized in Snowbird, Laura De Marco and Jan-Li Lin tried to understand this decomposition and studied the family $Per_2(-3)$. We have a parametrization (not injective) of this family given by
$$f_a := \frac{(1+3a)(-a+z)}{(1-a)(3az+z^2)}.$$
Here, the 2-cycle with multiplier $-3$ is  $\{0,\infty\}$. When $a\to 1$, the family $f_a$ diverges and $[f_a]$ diverges in $rat_2$.

In this case we can find two rescaling limits and the knowledge of the limiting dynamics gives us the good rescalings that we have to look for in order to compute them.

 As we have a persistent period 2 cycle that converges when $a\to1$, we know that for some normalization the second iterate of $f_a$ converges to a quadratic rational map with a parabolic fixed point and that it separates critical points. As critical points of $f_a$ are $-a$ and $3a$ and converge to different limits, we are in this normalization. After computations we verify that 
 $$ f^2_a\to f^2_1:=z(3+z)/(z-1) \text{ when } a\to 1\quad\text{(cf Figure \ref{ExMilnor2})}.$$
 
  \begin{figure}
 \centerline{\includegraphics[width=8.5cm]{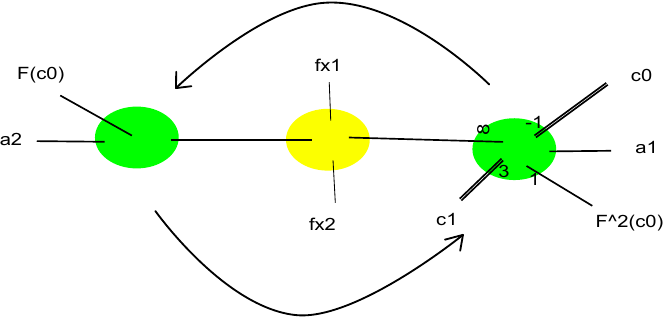}}
   \caption{The yellow sphere is fixed and the associated cover is $-Id$. The green spheres have period 2 and  are adjacent to a period 2 cycle ($a1\to a2\to a1$) with multiplier $-3$. On the green sphere on the right, the rescaling limit has a parabolic fixed point at infinity.}
\label{ExMilnor2} \end{figure}

Infinity is a parabolic fixed point for $f^2_1$. We can also verify that the fixed sphere is in the branch attached at infinity (two of the three fixed points of $f_a$ converge to infinity). The third fixed point is constant equal to $1$ which is the second preimage of infinity for $f^2_1$. Otherwise we note that $-1$ is a critical point of $f^2_1$ and that $f^2_1(-1)=1$ so $-1$ is prefixed (so the other critical point $3$ lies in the parabolic basin of infinity). So there is no contradiction for the existence of a second resealing limit. J.Milnor's remark suggest to look if there is not a rescaling limit of period 3 marked by  a period 3 cycle. We would be in the configuration of Figure \ref{exmil}.

 \begin{figure}
 \centerline{\includegraphics[width=12.5cm]{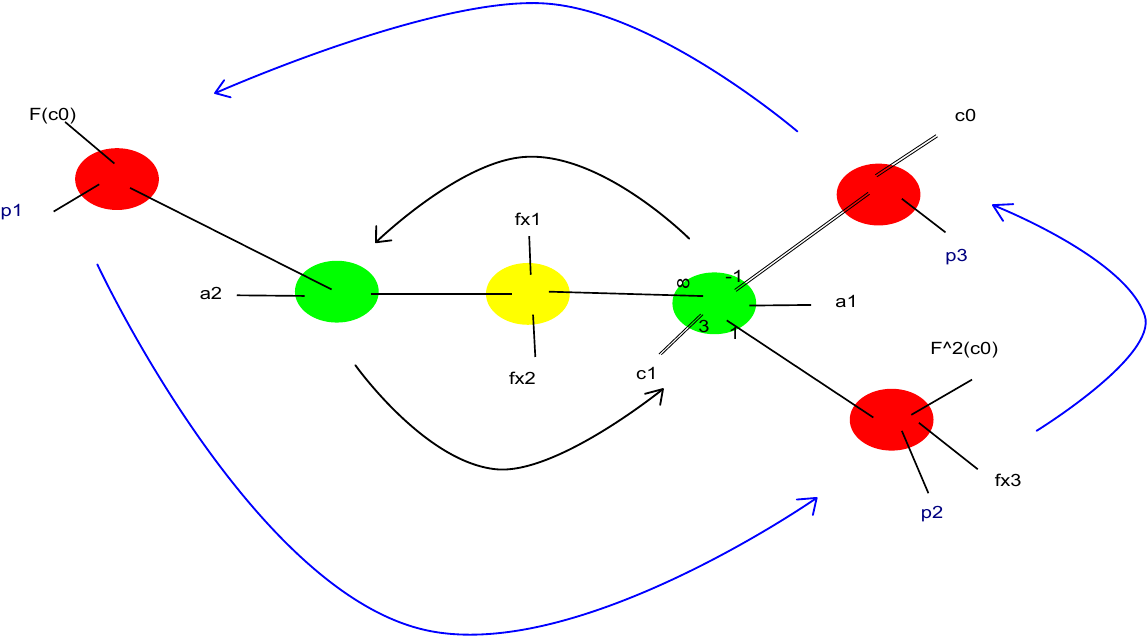}}
   \caption{The yellow sphere is fixed and the cover associated is $-Id$. The green spheres have period 2 and are adjacent to a period 2 cycle ($a1\to a2\to a1$) with multiplier $-3$. On the green sphere on the right, the rescaling limit has a parabolic fixed point at infinity, $c1$ is in its direct basin and $c0$ maps after two iterates to infinity. The red spheres are a cycle of period 3 and are adjacent to a period three cycle with multiplier 1 ($p1\to p2\to p3 \to p1$). The rescaling limit associated to the upper-right sphere is a quadratic polynomial.}
\label{exmil} \end{figure}

We know that such a sphere would be marked by the critical point tending to $1$ and an element of this orbit.
Thus we are looking for a point of period three $f_a$ that tends to $-1$ and after a computation we see that there exists exactly one like it that we will denote by $p3_a$.

We choose the Moebius transformation $M_a$ such that $$M_a(3a)=\infty, M_a(-a)=0\text{ and }M_a(p3_a)=1,$$ such that:
$$M_a(z)=\frac{z+a}{z-3a}\cdot \frac{p3_a-3a}{p3_a+a}~.$$
Thus if such a rescaling limit exists, then the branch containing one of the critical points would be at infinity and fixed and the other one would be at $0$, so we would obtain a quadratic polynomial of the form $z^2+c$ with $c\in \C$. After computation we find:
$$\lim_{a\to 1}M_a\circ f^3_a\circ M_a^{-1}(z)=z^2+1/4.$$

\begin{remark}
We can see that in this case, if we didn't mark the cycle $a_1,a_2$ then we are in the case \ref{class11} of Theorem \ref{thmkiw2}. In Figure \ref{ExMilnor2} we are in the case \ref{class2a} of Theorem \ref{thmkiw2} and in Figure \ref{exmil}, in the case  \ref{class2b} of Theorem \ref{thmkiw2}.
\end{remark}


\subsection{Comparison with J.Milnor's compactification}\label{last2}

\noindent{\textbf{J. Milnor's point of view.}}

In \cite{RemarksQuad}, J. Milnor provides a parametrization of $\rat_2$ by looking at two of the symmetric functions of the multipliers at the fixed points. In such a way, $\rat_2$ can be viewed as a subset of $\C^2$ and be compactified as a subset of $\C P^2$.

Consider a sequence of degree 2 rational maps $(f_n)_n$.
Each $f_n$ after a conjugacy by a Moebius transformation is of the form 
$$f_n(z)=z\frac{z+\alpha_n}{\beta_n z+1} \quad\text{ with } \alpha_n\beta_n\neq1,$$
where $\alpha_n$ and $\beta_n$ are the respective multipliers of the fixed points $0$ and $\infty$. 

Denote by $\gamma_n$ the multiplier of the third fixed point.
If $(f_n)_n$ diverges, one of the multipliers diverges. Suppose for example that $\gamma_n\to \infty$.
The index formula assures that $$\frac{1}{1-\alpha_n}+\frac{1}{1-\beta_n}+\frac{1}{1-\gamma_n}=1\quad\text{ so }\alpha_n\beta_n\to 1.$$
Suppose that $\beta_n\to\beta_\infty\in\C^\star$.
Then $f_n\to(\beta_\infty \cdot Id)$ locally uniformly outside a point which is the limit of the critical points of the $f_n$. J. Milnor proved that the intersection points between the boundary of $\rat_2$ and the curves corresponding to rational maps with a cycle of given period and multiplier are the points where two of the multipliers are conjugate roots of unity.

\medskip
\noindent{\textbf{Our point of view on $\rat_2$.}}

 \begin{figure}
  \centerline{\includegraphics[width=11cm]{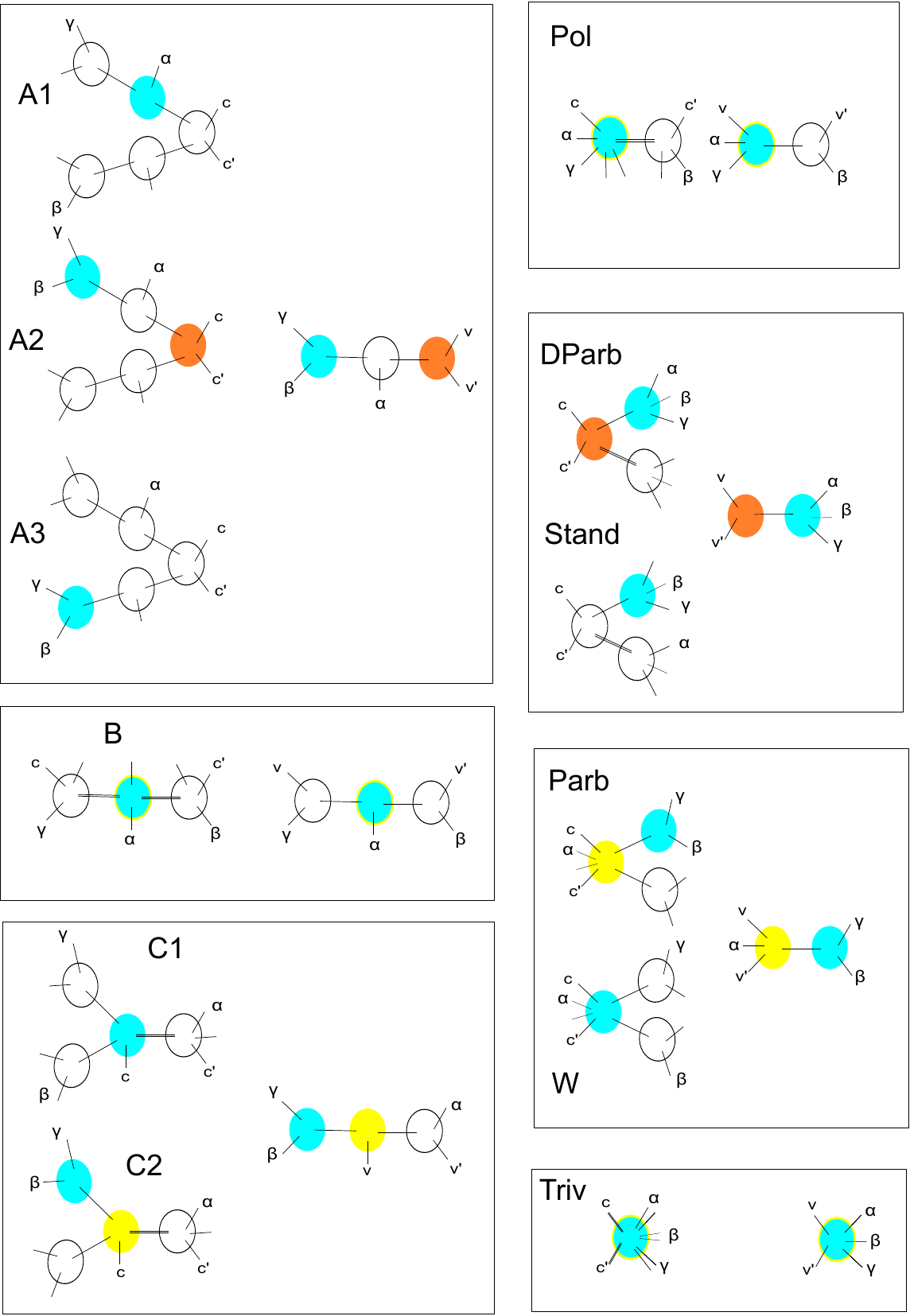}}
   \caption{Enumeration of the possible configurations (after permutations of the labelings) of the covers between trees of spheres with a portrait corresponding to a rational map $f$ of degree 2 with non super-attractive fixed points $\alpha,\beta$ and $\gamma$  and two critical points $c$ and $c'$. We use the notation $v:=f(c)$ and $v':=f(c)$.  The tree $T^Y$ is on the left and the corresponding tree $T^Z$ is on its right. The pre-fixed leaves of $F$ are not labeled. 
   }
\label{tarbr3fix} \end{figure}

Consider a rational map $f$ of degree 2 with 3 distinct and non super-attractive fixed points $\alpha,\beta,\gamma$. Let $X=\{ \alpha,\beta,\gamma\}$. By the Riemann-Hurwitz formula we know that $f$ has exactly two  critical points, that we will denote by $c$ and $c'$, so two critical values $v:=f(c)$ and $v':=f(c')$. We set $Z:=X\cup \{v,v'  \}$ and $Y:=f^{-1}(Z)$. We define ${\bf F}:=(f|_Y,deg_f|_Y)$ to be the corresponding portrait.

Consider the set of rational maps of degree 2 with 3 distinct and non super-attractive fixed points.  We can define three injections $x,y$ and $z$ such that these rational maps are marked by $(f,y,z)$ and such that we have $y|_X=z|_X$. All these rational maps have same portrait ${\bf F}$ and we will denote it by $\Rat_{{\bf F},X}$. 
Note that $\Rat_{{\bf F},X}\subset\Rat_2$ but this inclusion is strict. Indeed, this set does not contain:
\begin{itemize}
\item the rational maps with a critical fixed points (conjugated to a polynomial in $\rat_2$);
\item the rational maps with a simple parabolic point;
\item the rational maps with a double parabolic point.
\end{itemize}

We are going to see that all of these missing elements appear in some way in $\partial\Rat_{{\bf F},X}$.

We take $\F$ a dynamical system between trees of spheres with portrait ${\bf F}$.
The cover $\F$ has degree 2. According to Lemma \ref{ptscrit}, 
$\F$ has two critical leaves $c$ and $c'$ and all of the other critical vertices lie on the path connecting them. Figure \ref{tarbr3fix} represents all the different possibilities of combinatorial trees for such a $\F$ (after a change of the fixed or critical leaves labels).

The vertex $w_0$ separating the three fixed leaves is represented in cyan. It is surrounded by some yellow when it is critical and fixed, ie in the configurations {\bf B, Pol} and {\bf Triv}.  According to Corollary \ref{cvuutil}, the covers are converging in $\rat_2$ in those cases. In the cases {\bf Pol} and {\bf Triv}, we recognize the limits which are respectively the class of the polynomial maps and of the rational maps which have no super-attractive fixed points and no parabolic fixed points. In the case {\bf B}, we recognize the class of the polynomial maps with a super -attractive fixed point, ie the class of $z\to z^2$.

The vertex $w_0$ is not fixed in the configurations {\bf A1,C1} and {\bf W} so there is no rescaling limits in these cases.

Denote by $v_0$ the critical vertex which is the closest to $w_0$.

In the configurations {\bf C2} and {\bf Parb}, the vertex $v_0$ and its image are on the path $[w_0,\alpha]$, so we can apply the annulus lemma in the non critical case and conclude that $v_0$ is fixed, thus according to Corollary \ref{cvuutil} we are in the closure of $\rat_2$. In the case {\bf C2} we remark that the critical point which is the attaching point of the branch of $c'$ on $v_0$ is fixed and it is the limit of a fixed point so we are in the case of the polynomial maps  class and there is in addition a double fixed point so this polynomial is conjugated to $z\to z^2+1/4$. In the case {\bf Parb} there is again a double fixed point and a third non critical fixed point so we are in the parabolic rational map class which have no super-attractive fixed point.

The configurations {\bf DParb, A2} and {\bf A3} are a bit more complicated to identify. For this suppose that these covers are dynamical limits of dynamical systems between spheres covers with portrait ${\bf F}$. Then according to the last lemma of {\cite{A2}}, after passing to a subsequence and the changing the portrait, we can suppose that $v_0$ and $F(v_0)$ are in $T^X$. Suppose that $w_0$, $v_0$ and $F(v_0)$ are on a same path then, as the vertex $w_0$ is fixed (and the associated cover is the identity because it fixes the attaching points of the branches containing the fixed points and they have degree one), according to the annuli lemma we have $v_0=F(v_0)$ and thus from Corollary \ref{cvuutil}, the sequence converges uniformly to $[f_{v_0}]$ in $\rat_2$.
So they are in the closure of $\rat_2$ and the limit is a class of non polynomial covers with a triple fixed point in the cases {\bf DParb} and {\bf A2} or double fixed point in the case {\bf A3}. Conversely, we know that such elements in $\Rat_2$ are limits of dynamical systems between sphere covers with portrait ${\bf F}$ and it is clear that we obtain such dynamical covers as their limits.

Suppose by contradiction that $w_0$, $v_0$ and $F(v_0)$ are not on a same path then, using the annuli lemma in the non critical case, we prove that the vertex separating them is fixed and the corresponding cover is the identity and it follows that $v_0=F(v_0)$ which is absurd.

In configuration {\bf Stand}, the cover associated to $w_0$ is not the identity because the attaching point of the branch containing $\alpha$ doesn't map to itself. We deduce that the vertex $v_0$ is not fixed. If this cover is a dynamical limit of dynamical systems between spheres with portrait ${\bf F}$ and if there is a rescaling limit, then it has period more than $1$ and, according to Theorem \ref{thmkiw2}, the map $f_{w_0}$ has finite order so the multipliers at the fixed points $\beta$ and $\gamma$ are conjugated roots of unity.

\noindent{\textbf{Conclusion.}}


Remark that requiring to mark three non critical fixed points for the elements of $\Rat_{{\bf F},X}$ implies that
$\Rat_2\setminus \Rat_{{\bf F},X}\neq \emptyset$. However, it follows from the above discussion that all the elements of $\Rat_2\setminus \Rat_{{\bf F},X}$ can be identified in $\partial\Rat_{{\bf F},X}$. In addition, $\partial\Rat_{{\bf F},X}$ contains a blow up at the points corresponding to the rational map with a triple fixed point.









\begin{thebibliography}{105}

\bibitem[A]{A} {\sc  M. Arfeux}, {\em Dynamique holomorphe et arbres de sphères}, Thèse de l'université Toulouse III.

\bibitem[A1]{A2} {\sc  M. Arfeux}, {\em Dynamics on trees of spheres}, submitted.

\bibitem[A2]{A1} {\sc  M. Arfeux}, {\em Compactification and trees of spheres covers}, submitted.

\bibitem[D1]{D1} {\sc  L. De Marco}, {\em Iteration at the boundary of the space of rational maps}, Duke Math. Journal. 130, 169-197, 2005.

\bibitem[D2]{D2} {\sc  L. De Marco}, {\em The moduli space of quadratic rational maps}, Journal of the AMS. 20, 321-355, 2007.

\bibitem{BHC} {\sc  A. Epstein}, {\em Bounded hyperbolic components of quadratic rational maps}, Ergodic Theory Dynamical Systems 20, no. 3, 727-748, 2000.

\bibitem[K1]{Kiwi3} {\sc  J. Kiwi}, {\em Puiseux series polynomial dynamics and iteration of complex cubic polynomials}, Annales de l'Institut de Fourier, 2006.

\bibitem[K2]{Kiwi1} {\sc  J. Kiwi}, {\em Puiseux Series Dynamics of Quadratic Rational Maps}, Israel J. Math. 201, no. 2, 631?700, 2014.

\bibitem[K3]{Kiwi2} {\sc  J. Kiwi}, {\em Rescaling Limits of Complex Rational Maps}, Duke Math. J., 164(7):1437-1470, 2015.

\bibitem[M1]{RemarksQuad} {\sc  J. Milnor}, {\em Geometry and Dynamics of Quadratic Rational Maps}, Experimental,Volume 2, Issue 1,1993. 

\bibitem[M2]{DynInOne} {\sc  J. Milnor}, {\em Dynamics in One Complex Variable}, Annals of Mathematics Studies, third edition, Princeton University Press, 2006.

\bibitem[S]{S} {\sc J. Stimson}, {\em Degree two rational maps with a periodic critical point}, Thesis, University of Liverpool, 1993.

\end{thebibliography}
\end{document}